\tikzset{anchorbase/.style={baseline={([yshift=-0.5ex]current bounding box.center)}}}
\tikzstyle directed=[postaction={decorate,decoration={markings,
    mark=at position #1 with {\arrow{>}}}}]
\tikzstyle rdirected=[postaction={decorate,decoration={markings,
    mark=at position #1 with {\arrow{<}}}}]
\numberwithin{equation}{section}
\newtheorem{theorem}[subsubsection]{Theorem}
\newtheorem{lemma}[theorem]{Lemma}
\newtheorem{prop}[theorem]{Proposition}
\newtheorem{corollary}[subsubsection]{Corollary}
\theoremstyle{definition}
\newtheorem{definition}[subsubsection]{Definition}
\newtheorem{remark}[theorem]{Remark}
\newtheorem{example}[subsubsection]{Example}
\newcommand{\fm}{\mathfrak{m}}
\newcommand{\p}{\mathfrak{p}}
\newcommand{\fp}{\mathfrak{p}}
\newcommand{\bA}{\mathsf{A}}
\newcommand{\bC}{\mathsf{C}}
\newcommand{\mO}{\mathbb{O}}
\newcommand{\cF}{\mathcal{F}}
\newcommand{\cL}{\mathcal{L}}
\newcommand{\red}{\mathrm{red}}
\newcommand{\Aff}{\mathsf{Aff}}
\newcommand{\Fais}{\mathsf{Fais}}
\newcommand{\Sch}{\mathsf{Sch}}
\newcommand{\Fun}{\mathsf{Fun}}
\newcommand{\Alg}{\mathsf{Alg}}
\newcommand{\LocAlg}{\mathsf{LocAlg}}
\newcommand{\Set}{\mathsf{Set}}
\newcommand{\Grp}{\mathsf{Grp}}
\newcommand{\Ab}{\mathsf{Ab}}
\newcommand{\Mod}{\mathsf{Mod}}
\newcommand{\Rep}{\mathsf{Rep}}
\newcommand{\QCoh}{\mathsf{QCoh}}
\newcommand{\LRS}{\mathsf{Lrs}}
\newcommand{\tto}{\twoheadrightarrow}
\newcommand{\cO}{\mathcal{O}}
\newcommand{\cJ}{\mathcal{J}}
\newcommand{\mN}{\mathbb{N}}
\newcommand{\mG}{\mathbb{G}}
\newcommand{\mZ}{\mathbb{Z}}
\newcommand{\mA}{\mathbb{A}}
\newcommand{\mC}{\mathbb{C}}
\newcommand{\mP}{\mathbb{P}}
\newcommand{\mF}{\mathbb{F}}
\newcommand{\End}{\mathrm{End}}
\newcommand{\im}{\mathrm{im}}
\newcommand{\Ob}{\mathrm{Ob}}
\newcommand{\colim}{\mathrm{colim}}
\newcommand{\soc}{\mathrm{soc}}
\newcommand{\Hom}{\mathrm{Hom}}
\newcommand{\Aut}{\mathrm{Aut}}
\newcommand{\Nat}{\mathrm{Nat}}
\newcommand{\Sym}{\mathrm{Sym}}
\newcommand{\cG}{\mathcal{G}}
\newcommand{\op}{\mathrm{op}}
\newcommand{\Ind}{\mathrm{Ind}}
\newcommand{\Res}{\mathrm{Res}}
\newcommand{\Spec}{\mathrm{Spec}}
\newcommand{\Nil}{\mathrm{Nil}}
\newcommand{\Vecc}{\mathsf{Vec}}
\newcommand{\sVec}{\mathsf{sVec}}
\newcommand{\Ver}{\mathsf{Ver}}
\newcommand{\Top}{\mathsf{Top}}
\newcommand{\Sh}{\mathsf{Sh}}
\newcommand{\bX}{\mathbf{X}}
\newcommand{\bY}{\mathbf{Y}}
\newcommand{\bZ}{\mathbf{Z}}
\newcommand{\bU}{\mathbf{U}}
\newcommand{\bSpec}{\mathbf{Spec}}
\newcommand{\unit}{{\mathbbm{1}}}
\newcommand{\cC}{\mathsf{C}}
\newcommand{\supp}{\mathrm{supp}}
\newcommand{\cM}{\mathcal{M}}
\begin{document}
\title[Geometry in tensor categories]{Algebraic geometry in tensor categories}
\author{Kevin Coulembier}
\address{K.C.: School of Mathematics and Statistics, University of Sydney, NSW 2006, Australia}
\email{kevin.coulembier@sydney.edu.au}


\keywords{tensor categories, affine group schemes, generalised algebraic geometry, equivariant geometry}
\subjclass[2020]{18M05, 18M25, 14A30}

\begin{abstract}
We set up some foundations of generalised scheme theory related to new incompressible symmetric tensor categories. This is analogous to the relation between super schemes and the category of super vector spaces.
\end{abstract}

\maketitle


\section*{Introduction}

We initiate a systematic study of `algebraic geometry associated to a tensor category'. Following \cite{Del90, Del02}, a tensor category is an abelian rigid symmetric (contrary to the convention in \cite{EGNO}) monoidal category over a field $k$. The motivation for this paper originates in the discrepancy in complication between the theory of tensor categories over fields of characteristic zero and over fields of positive characteristic.

Concretely, in characteristic zero, Deligne proved in \cite{Del90, Del02} that two tensor categories stand out: the tensor categories of vector spaces and of super vector spaces. Via tannakian reconstruction, all tensor categories of moderate growth can be constructed out of these two tensor categories. In the language of \cite{BE, BEO, CEO2}, these two categories are the unique `incompressible' tensor categories of moderate growth. Furthermore, intuitively, one can also `generate' the theories of algebraic geometry and super algebraic geometry (over fields) out of these two tensor categories.

In sharp contrast, in positive characteristic, there are at least countably many incompressible tensor categories of moderate growth, which take over the role previously monopolised by (super) vector spaces, see \cite{BE, BEO, AbEnv, CEO, Os}. For several reasons, for instance in order to study representation theory of internal affine group schemes, it seems worthwhile to develop a notion of `algebraic geometry' associated to these tensor categories, as analogues of super geometry. Since the complete classification of the incompressible tensor categories is not currently established, and the known incompressible tensor categories still hold some mysteries, in the current paper we do not focus on them specifically. Rather we try to define scheme theory for as many tensor categories as possible. We note that the idea of developing geometries via tensor categories was also suggested by Manin \cite[\S 1.4]{Manin} in the context of noncommutative geometry.

Interestingly, there is already a notion of algebraic geometry associated to symmetric monoidal categories in much greater generality. This is the abstract scheme theory of \cite{TV}, motivated by `the field with one element $\mF_1$'. Indeed, a rigorous notion of $\mF_1$-geometry can be defined as the geometry associated via \cite{TV} to the monoidal category $\Set$. However, the theory \cite{TV} does not involve a `geometric realisation' of the schemes, contrary for instance to super schemes which are specific sheaves of super algebras on topological spaces. Indeed, the schemes in \cite{TV} are only realised as specific faisceaux (sheaves) on the opposite of the category of algebras. However, given that the motivation of \cite{TV} was to develop geometry from categories `deeper' than the category of abelian groups, and given that incompressible tensor categories are the deepest tensor categories, it is still sensible to expect the theory of \cite{TV} to be relevant. This will be confirmed in this paper.

Orthogonal to the idea of developing algebraic geometry for incompressible tensor categories is associating algebraic geometry to tannakian categories, {\it i.e.} representation categories $\Rep G$ of affine group schemes $G$. A natural desire, also expressed in \cite[\S 1.4]{Manin}, for any potential `algebraic geometry' associated to $\Rep G$ would be for it to yield $G$-equivariant geometry. In Section~\ref{SecFais}, we demonstrate that the category schemes with $G$-action embeds fully faithfully into the category of faisceaux associated to $\Rep G$, which makes it possible to study $G$-equivariant geometry in a formal way purely starting from~$\Rep G$. On the other hand, we show that the image of this embedding does {\em not} correspond to the `schemes' in the sense of~\cite{TV}. Furthermore, since schemes with $G$-action (in their geometric realisation) can also not be constructed by glueing together affine schemes with $G$-action\footnote{Infinitesimal group schemes yield the exception to all these observations, and the corresponding tannakian categories {\em will} be included in our constructions.}, this also precludes an approach to equivariant geometry in the spirit of super geometry. 

We are thus logically forced to restrict our attention to specific tensor categories. Concretely, we impose some conditions which naturally appeared in the study of commutative algebra in \cite{ComAlg} and which are conjectured to hold for the known incompressible tensor categories. This conjecture has already been verified for $p=2$ in \cite{CEO2}. Under such assumptions, we show in Section~\ref{SecGeoRel} that every algebra in the tensor category admits a geometric realisation, as a locally ringed space (internal to the tensor category). For completeness, we prove that these realisations do not always exist for tannakian categories.

In Section~\ref{SecSch}, we then define the category of schemes associated to an appropriate tensor category as the category of those locally ringed spaces which are locally geometric realisations of algebras. We derive some basic properties and, as an example demonstrating the soundness of the theory, show that the various characterisations of quasi-affine schemes from the classical case carry over. We show that the category of schemes is equivalent to a subcategory of faisceaux, via association of the functor of points. This shows in particular that we have indeed developed a geometric companion of the abstract scheme theory of \cite{TV}, for those tensor categories satisfying our assumptions.
In Section~\ref{SecAlgSch} we verify that basic essential properties of algebraic schemes (of finite type over a field) are satisfied for schemes in tensor categories.

The first major question that arises is whether quotients of algebraic groups (in appropriate tensor categories) by subgroups exist as schemes. For the category of super vector spaces this was answered in the affirmative in \cite{MZ}. We leave this question for future work. However, in Section~\ref{SecGH}, we take this existence as a hypothesis and derive several consequences, regarding observable subgroups.

Finally, in Section~\ref{SecEx}, we focus on some examples in the smallest non-semisimple incompressible tensor category: $\Ver_4^+$, in characteristic $2$. We show that a certain projective space is actually simultaneously affine; and give an example of (the existence) of a homogeneous space.

\section{Preliminaries}

\subsection{Tensor categories}

We refer to \cite{Del90, DM, EGNO} for a full introduction to the theory of tensor categories.

\subsubsection{}Following \cite{Del90}, an essentially small $k$-linear symmetric monoidal category $(\bC,\otimes,\unit)$ is a {\bf tensor category over $k$} if
\begin{enumerate}
\item $\bC$ is abelian;
\item $k\to\End(\unit)$ is an isomorphism;
\item $(\bC,\otimes,\unit)$ is rigid, meaning that every object $X$ has a monoidal dual $X^\vee$.
\end{enumerate}
By \cite[Proposition~1.17]{DM}, it then follows that $\unit$ is simple.
If additionally, we have
\begin{enumerate}
\item[(4)] Every object in~$\bC$ has finite length;
\end{enumerate}
then using (2) and (3) shows also that morphism spaces are finite dimensional. Following \cite{EGNO} we will henceforth always assume that (4) is satisfied in a tensor category. What we call tensor category is thus called pretannakian category in \cite{CEO, ComAlg}.

A {\bf tensor functor} between tensor categories is a $k$-linear exact symmetric monoidal functor. It is well-known, see \cite{DM}, that tensor functors are automatically faithful. 

\subsubsection{} In an abelian category $\bA$ that admits all (small) coproducts, a {\bf generator} is an object~$G\in\bA$ such that 
$\Hom(G,-):\bA\to\Ab$
is faithful. Equivalently, every object in $\bA$ is a quotient of a direct sum of copies of $G$.

To avoid confusion, we will not use the term generator for an object which generates a tensor category as a tensor category.

\subsubsection{} We will make little distinction between a tensor category $\bC$ and its ind-completion $\Ind\bC$. Similarly, we will use the same notation for a tensor functor and its cocontinuous extension to the ind-completions. The ind-completion of a tensor category $\cC$ is a Grothendieck category, in particular a generator is given by a direct sum over isomorphism classes of objects in $\cC$.

\subsubsection{} The most basic tensor category over $k$ is the category of finite-dimensional vector spaces $\Vecc=\Vecc_k$. Its ind-completion is the category $\Vecc^\infty$ of all vector spaces over $k$. For any tensor category $\cC$ we have an inclusion $\Vecc\hookrightarrow\cC$ and we denote its right adjoint, $\cC\to\Vecc$ or $\Ind\cC\to\Vecc^\infty$ by $H^0$, see \cite[\S 1.3]{ComAlg}. Furthermore, we typically abbreviate $H^0(X)$ to $X^0$.

\subsection{Commutative algebra} We refer to \cite{ComAlg} for a complete introduction to commutative algebra in tensor categories.

\subsubsection{}
We denote by $\Alg\cC$ the category of commutative algebras in $\Ind\cC$ and by $\LocAlg\cC$ the subcategory of local algebras and morphisms. We also write $\Aff\cC:=(\Alg\cC)^{\op}$ and refer to its objects as affine schemes. For $A\in\Alg\cC$, we have the monoidal category of $A$-modules $\Mod_{\cC}A$ in $\Ind\cC$.

For $A\in\Alg\cC$ we have the subalgebra $\Alg_k\ni A^0\subset A$. Dually, we have the maximal quotient algebra $A\tto A_0$ which belongs to $\Alg_k$. Its kernel is denoted by $R(A)<A$.

We can localise with respect to subsets $S\subset A^0$, see \cite[\S 4.1]{ComAlg}. In particular, for $f\in A^0$, we have the algebra morphism $A\to A_f$ and for an $A$-module $M$ we have the $A_f$-module $M_f$.

We recall the following simple results from \cite[5.4.1 and 5.4.2]{ComAlg}.

\begin{lemma}\label{LemfInv}
Consider $A\in\Alg\cC$.
\begin{enumerate}
\item An element $f\in A^0$ is invertible if and only if no proper ideal of $A$ contains $f$.
\item If $A$ is local with maximal ideal $\fm$, then $A^0$ is local with maximal ideal $\fm^0$. 
\end{enumerate}
\end{lemma}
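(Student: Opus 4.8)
The plan is to work throughout with the standard dictionary between internal and classical commutative algebra: an element $f\in A^0=\Hom(\unit,A)$ is a morphism $f\colon\unit\to A$, its invertibility means there is $g\in A^0$ with $fg=u$ (the unit $u\colon\unit\to A$ of $A$), and this is equivalent to the multiplication map $\mu_f:=m\circ(f\otimes\mathrm{id}_A)\colon A\to A$ (with $m$ the multiplication of $A$) being an isomorphism of objects, equivalently of $A$-modules. First I would record that the principal ideal $(f)$ equals $\im(\mu_f)$: since $\mu_f$ sends the unit of $A$ to $f$, the morphism $f$ factors through $\im(\mu_f)$; this image is an ideal, being the image of an $A$-linear map; and any ideal through which $f$ factors contains $A\cdot f=\im(\mu_f)$. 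Hence an ideal contains $f$ iff it contains $(f)$, so ``no proper ideal contains $f$'' is equivalent to $(f)=A$, i.e. to $\mu_f$ being an epimorphism. The forward direction of (1) is then immediate: if $f$ is invertible then $\mu_f$ is an isomorphism, so $(f)=A$ and no proper ideal can contain $f$.

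For the backward direction — the one genuine point — I assume $\mu_f$ is epi and must upgrade this to an isomorphism, replacing the classical step ``$1\in(f)$, hence $1=af$'' which is unavailable because $\unit$ need not be projective. Set $J:=\ker(\mu_f)$ with inclusion $\iota\colon J\hookrightarrow A$; this is an ideal (namely $\Ann(f)$). Since $\otimes$ is exact in each variable in a tensor category, $\mathrm{id}_J\otimes\mu_f$ is again an epimorphism, so
\[
\im\big(m\circ(\iota\otimes\mu_f)\big)\;=\;\im\big(m\circ(\iota\otimes\mathrm{id}_A)\big)\;=\;J\cdot A\;=\;J,
\]
using that $J$ is an ideal with $J\cdot u=J$. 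On the other hand, $m\circ(\iota\otimes\mu_f)$ is the map $j\otimes a\mapsto (f\iota(j))\,a$, which is zero because $\mu_f\circ\iota=0$ by definition of $J$. Hence $J=0$, so $\mu_f$ is mono as well as epi, therefore an isomorphism; taking $g:=\mu_f^{-1}\circ u\in A^0$ gives $fg=u$, and $f$ is invertible.

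For (2) I would deduce everything from (1). Writing $\fm^0=H^0(\fm)$, left-exactness of $H^0$ applied to $\fm\hookrightarrow A$ realises $\fm^0$ as a subobject of $A^0$, and it is an ideal of $A^0$ since $\fm$ is an ideal of $A$. It is proper: if $u$ factored through $\fm$ then $\fm\supseteq(1)=A$, contradicting properness of $\fm$. Finally I show every $f\in A^0\setminus\fm^0$ is invertible. Membership $f\in\fm^0$ means precisely that $f$ factors through $\fm$, i.e. $(f)\subseteq\fm$; so $f\notin\fm^0$ gives $(f)\not\subseteq\fm$. As $\fm$ is the unique maximal ideal, every proper ideal lies in $\fm$, whence $(f)$ cannot be proper and $(f)=A$. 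By part (1), $f$ is invertible. Thus the non-units of the commutative ring $A^0$ are exactly $\fm^0$, an ideal, so $A^0$ is local with maximal ideal $\fm^0$.

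I would flag the backward direction of (1) as the crux: the whole subtlety is that ``$(f)=A$'' only asserts that $\mu_f$ is an epimorphism of objects, not that the unit lifts through $\mu_f$, and the annihilator computation above is exactly what substitutes for the classical lifting argument. Everything else is a formal transcription of the standard proofs, relying only on exactness of $\otimes$, left-exactness of $H^0$, and the identification of global elements with $\Hom(\unit,-)$.
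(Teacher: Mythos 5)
Your proof is correct. There is nothing in the paper to compare it against line by line: the paper does not prove Lemma~\ref{LemfInv} at all, but recalls it from \cite[5.4.1 and 5.4.2]{ComAlg}, so any valid self-contained argument is acceptable, and yours is one. You correctly isolate the crux: ``$(f)=A$'' only says that $\mu_f=m\circ(f\otimes\mathrm{id}_A)$ is an epimorphism, and since $\unit$ need not be projective one cannot lift the unit through $\mu_f$ as in the classical argument. Your substitute --- computing the image of $m\circ(\iota\otimes\mu_f)$ in two ways, once as $J\cdot A=J$ (using that $\mathrm{id}_J\otimes\mu_f$ is still epi and that $J=\ker(\mu_f)$ is an ideal) and once as $0$ (using $\mu_f\circ\iota=0$ together with associativity and commutativity of $m$), and then invoking that a morphism which is both mono and epi in the abelian category $\Ind\cC$ is an isomorphism --- is sound, and uses only right-exactness of $\otimes$ and the unit axiom. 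Part (2) is then the routine deduction and is also fine. The one step you leave implicit is ``every proper ideal lies in $\fm$'': this requires that every proper ideal is contained in a \emph{maximal} ideal, which holds because $\unit$ is a compact object of $\Ind\cC$ (the unit morphism cannot factor through the union of a chain of proper ideals without factoring through one of them), so Zorn's lemma applies. This fact is again standard from \cite{ComAlg} and is implicit in the very notion of a local algebra, so it is not a genuine gap, but since you are otherwise writing a self-contained proof it deserves a sentence.
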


\subsubsection{} For $A\in\Alg\cC$, we have the topological space $\Spec A$. For $f\in A^0$ we denote by $(\Spec A)_f$ the open subset of prime ideals $\fp<A$ for which $f\not\in \fp^0$. It then follows easily that $A\to A_f$ induces a homeomorphism
$$\Spec(A_f)\;\to\; (\Spec A)_f,$$
so that henceforth we will simply write $\Spec A_f$ for either space.

\begin{lemma}\label{LemXf0}
The following are equivalent for $f,g\in A^0$:
$$\Spec A_f\subset \Spec A_g \;\Leftrightarrow\; \Spec A^0_f\subset \Spec A^0_g.$$
\end{lemma}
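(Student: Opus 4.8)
The plan is to show that each side of the equivalence is separately equivalent to the single algebraic condition that (the image of) $g$ becomes invertible after inverting $f$, and then to match these two invertibility statements across the two worlds. Concretely, I will establish the chain
\[
\Spec A_f\subseteq\Spec A_g \;\Longleftrightarrow\; g\in (A_f)^{\times} \;\Longleftrightarrow\; g\in (A^0_f)^{\times} \;\Longleftrightarrow\; \Spec A^0_f\subseteq \Spec A^0_g,
\]
where the two outer equivalences are the tensor-categorical and the classical incarnations of the standard fact ``$D(f)\subseteq D(g)$ iff $g$ is invertible in the localisation at $f$'', and the middle equivalence is the bridge between $\Alg\cC$ and ordinary commutative algebra.

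The rightmost equivalence is pure classical commutative algebra applied to the honest commutative $k$-algebra $A^0$: a prime $\fq\in\Spec A^0$ lies in $D(f)\setminus D(g)$ exactly when $f\notin\fq$ and $g\in\fq$, so $D(f)\subseteq D(g)$ holds iff $g$ lies in no prime of $A^0_f$, i.e.\ iff $g$ is invertible in $A^0_f$. For the leftmost equivalence I use the homeomorphism $\Spec A_f\cong(\Spec A)_f$ together with Lemma~\ref{LemfInv}(1) applied to the algebra $A_f$: the image of $g$ is invertible in $A_f$ iff no proper ideal of $A_f$ contains it, and, since every proper ideal of a nonzero algebra is contained in a prime ideal, this is equivalent to $g\notin\fP^0$ for every prime $\fP<A_f$. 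Transporting primes of $A_f$ to the primes $\fp\in(\Spec A)_f$ along the homeomorphism, and noting that for $g\in A^0$ one has $g\in\fP^0\Leftrightarrow g\in\fp^0$, this says precisely that every $\fp$ with $f\notin\fp^0$ also satisfies $g\notin\fp^0$, i.e.\ $(\Spec A)_f\subseteq(\Spec A)_g$.

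The middle equivalence is the main bridge and rests on two soft points. First, because $\unit$ is a compact object of $\Ind\cC$, the functor $H^0=\Hom(\unit,-)$ commutes with the filtered colimit $A_f=\colim(A\xrightarrow{\,f\,}A\xrightarrow{\,f\,}\cdots)$ defining the localisation; this yields a canonical identification $(A_f)^0=(A^0)_f=A^0_f$ under which $g$ is sent to $g$. Second, for any $B\in\Alg\cC$ and $g\in B^0$, invertibility of $g$ in $B$ is equivalent to invertibility of $g$ in $B^0$: if $m_g\colon B\to B$ is an isomorphism then so is $H^0(m_g)\colon B^0\to B^0$, and the latter is exactly multiplication by $g$ on $B^0$, so $g$ already has an inverse in $B^0$ (the converse is trivial). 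Applying this with $B=A_f$ converts ``$g\in(A_f)^{\times}$'' into ``$g\in((A_f)^0)^{\times}=(A^0_f)^{\times}$'', closing the chain.

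I expect the leftmost equivalence to be the only genuine obstacle, since it is where the tensor-categorical content actually enters. The subtle point is not Lemma~\ref{LemfInv}(1) itself but upgrading it from a statement about proper ideals to a statement about the points of $\Spec A_f$: this needs that a proper ideal of a nonzero algebra is always contained in a prime ideal (e.g.\ a maximal one, which is prime), a basic spectral fact I will take from \cite{ComAlg}. Everything else is either classical or formal, so once this input is in place the three equivalences assemble immediately into the claim.
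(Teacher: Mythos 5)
Your proof is correct and is essentially the paper's own argument: both reduce the spectral inclusions to the single statement that $g$ becomes invertible in the localisation, via Lemma~\ref{LemfInv}(1) applied to $A_f$ together with the classical fact on the $A^0$ side. You merely make explicit the steps the paper leaves implicit (that a non-unit lies in a maximal, hence prime, ideal; the identification $(A_f)^0\simeq (A^0)_f$ via compactness of $\unit$; and the equivalence of invertibility in $B$ and in $B^0$ for elements of $B^0$), and you route the ``obvious'' direction through the same invertibility chain rather than through the map $\rho_A\colon\fp\mapsto\fp^0$.
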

\begin{proof}
That the right inclusion implies the left is obvious. On the other hand, the left inclusion implies that $g$, when interpreted in $A_f$, is not included in any prime ideal. By Lemma~\ref{LemfInv}(1), it follows that $g$ is invertible in $A^0_f$, and the right inclusion follows.
\end{proof}

\subsection{The central hypotheses}

\subsubsection{}\label{hypo} We will develop the framework of algebraic geometry for tensor categories $\cC$ satisfying the following hypothesis:

For every $A\in \Alg\cC$, the following equivalent properties are satisfied:
\begin{enumerate}
\item The map $$\rho_A:\;\Spec A\;\to\; \Spec A^0$$
is the inclusion of a subspace, for each $A\in\Alg\cC$;
\item For every ideal $I<A$, we have $V(I)=V(AI^0)$;
\item The topology on $\Spec A$ is the pullback of the topology on $\Spec A^0$ under $\rho_A$;
\item The subsets $\{\Spec A_f\,|\, f\in A^0\}$ form a basis for the topology on $\Spec A$.
\end{enumerate}

That (1) - (3) are equivalent is an easy exercise, worked out in \cite[Lemma~5.1.5]{ComAlg}. The equivalence between (3) and (4) is immediate.

Under the above hypothesis, there is a good theory of localisation at prime ideals, as detailed in \cite{ComAlg}, which we will freely use. Concretely, we set $A_{\fp}=A\otimes_{A^0}A^0_{\fp^0}$ for every prime ideal $\fp<A$.

\begin{remark}
We know of no examples where Hypothesis~\ref{hypo} is satisfied without the stronger condition, that $\rho_A$ is actually a homeomorphism, is satisfied. The latter (so in particular \ref{hypo}) is satisfied in the following two cases, see \cite[\S 5]{ComAlg}:
\begin{enumerate}
\item $\cC$ is {\bf MN} and {\bf GR};
\item $\cC$ is the representation category of an infinitesimal group scheme.
\end{enumerate}
Moreover, if $\cC$ is {\bf MN}, then it follows from \cite[Corollary~5.3.2]{ComAlg} that $\cC$ satisfies Hypothesis~\ref{hypo} if and only if $\cC$ is also {\bf GR}.
\end{remark}

\subsubsection{} For convenience we recall the definitions of {\bf MN} and {\bf GR}. 
We say that $\cC$ is {\bf maximally nilpotent}, or simply {\bf MN}, if
\begin{enumerate}
\item[{\bf(MN1)}] For every simple $L\not=\unit$ in~$\cC$, the algebra $\Sym L$ is finite; and
\item[{\bf (MN2)}] For every non-split $\alpha:\unit\hookrightarrow X$ in~$\cC$, there exists $n$ for which $\alpha^n:\unit\to\Sym^nX$ is zero.
\end{enumerate}

We say that $\cC$ is {\bf geometrically reductive}, or simply {\bf GR}, if
\begin{enumerate}
\item[{\bf(GR)}] For every non-zero morphism $X\tto\unit$ in~$\cC$, there exists $n>0$ for which $\Sym^n X\tto\unit$ is split.
\end{enumerate}

\subsubsection{}\label{ConsMNGR} Tensor categories satisfying {\bf MN} and {\bf GR} have many desirable properties, see \cite{ComAlg, CEO2}. For instance, by \cite[\S 3 and \S 6]{ComAlg}, all finitely generated algebras are noetherian, Nakayama's lemma holds, commutative Hopf algebras are faithfully flat over Hopf subalgebras and all simple algebras in $\Alg\cC$ are field extensions of the base field.

\section{Sheaves with values in a tensor category}

Let $\bC$ be a tensor category over a field $k$.

\subsection{Sheaves}
As in any Grothendieck category, we have a well-behaved notion of sheaves on a topological space with values in $\Ind\bC$. The case $\bC=\Vecc$ yields ordinary sheaves with coefficients in $k$.

\subsubsection{}Concretely, by a $\bC$-presheaf on a topological space $X$, we understand a functor
$$\cF:\;\mO(X)^{\op}\to \Ind\bC,$$
where $\mO(X)$ is the category of open subsets of $X$ and inclusions. Then $\cF$ is a $\cC$-sheaf if
$$\cF(U)\;\to\; \prod_a \cF(U_a)\;\rightrightarrows\;\prod_{a,b}\cF(U_a\cap U_b)$$
is an equaliser in $\Ind\bC$, for each open cover $U=\cup_a U_a$ of each open subset $U\subset X$. We denote the corresponding category by $\Sh(X,\cC)$. Similarly, a sheaf of $\cC$-algebras (which we will usually call a ringed space in $\cC$) is a functor from $\mO(X)^{\op}$ to $\Alg\cC$, where the equaliser condition can be interpreted either in $\Alg\cC$ or $\Ind\cC$. 

\subsubsection{}As usual, the stalk of a $\cC$-sheaf $\cF$ at $x\in X$ is given by
$$\cF_x:=\varinjlim_{U\ni x}\cF(U)\;\in\Ind\cC.$$

Let $\cF$ be a $\bC$-sheaf on a topological space $X$. Since $H^0$ is continuous, the presheaf $\cF^0$ defined by $U\mapsto \cF(U)^0$ is a $k$-valued sheaf on $X$. Moreover, since $H^0$ commutes with direct limits, we have isomorphisms of stalks $\cF^0_x\simeq (\cF_x)^0$ for all $x\in X$.

For a $\cC$-sheaf $\cF$ on $X$, its {\bf support} $\supp\cF\subset X$ consists of all $x\in X$ with $\cF_x\not=0$.

\subsubsection{}
We will freely use classical notions such as the pushforward $f_\ast\cF$ and inverse image sheaf $f^{-1}\cG$, for $f:X\to Y$ a map (= continuous function) of topological spaces and $\cF$, resp.~$\cG$, a sheaf on $X$, resp. $Y$. 
For any $\bC$-sheaf $\cF$ on $X$, we have the object of global sections 
$$\Gamma(X,\cF)=\cF(X)\in\Ind\cC.$$

\subsubsection{}\label{Bsheaf} For a basis $B$ which is closed under finite intersection (this condition can easily be omitted) of a topological space, we can define $\cC$-sheaves with respect to $B$ as functors $\cF:B^{\op}\to\Ind\cC$ for which
$$\cF(U)\;\to\; \prod_a \cF(U_a)\;\rightrightarrows\;\prod_{a,b}\cF(U_a\cap U_b)$$
is an equaliser in $\Ind\cC$,
for every $U\in B$ covered by $\{U_a\in B\}$. By \cite[Proposition~I.12]{EH}, the functor which restricts a sheaf on $X$ to $B\subset \mO(X)$ yields an equivalence of categories.

\begin{corollary}\label{CorBsh} Consider $A\in\Alg\cC$.
If $\Spec A\to \Spec A^0$ is the inclusion of a subspace, then pullback yields an equivalence
$$\Sh(\Spec A^0,\cC)\;\xrightarrow{\sim}\; \Sh(\Spec A,\cC).$$
Moreover, if a sheaf $\cF$ on $\Spec A^0$ is sent to $\cG$, then canonically $\cG(\Spec A_f)\simeq\cF(\Spec A^0_f)$ for all $f\in A^0$.
\end{corollary}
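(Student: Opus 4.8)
The plan is to deduce this corollary from the general principle stated in \ref{Bsheaf}, namely that giving a $\cC$-sheaf on a space is the same as giving a $\cC$-sheaf on a basis closed under finite intersection. First I would observe that the hypothesis ``$\Spec A\to\Spec A^0$ is the inclusion of a subspace'' is precisely condition (1) of Hypothesis~\ref{hypo}, so by the equivalence of (1)--(4) we may use condition (4): the open subsets of the form $\Spec A_f$, for $f\in A^0$, form a basis $B$ for the topology of $\Spec A$. Since these are exactly the sets $\rho_A^{-1}(\Spec A^0_f)$, and the $\Spec A^0_f$ form a basis $B^0$ of the classical space $\Spec A^0$, the map $\rho_A$ restricts to a bijection $B^0 \to B$ which respects the inclusion and intersection relations between basic opens; concretely $\Spec A^0_f\cap\Spec A^0_g=\Spec A^0_{fg}$ maps to $\Spec A_f\cap\Spec A_g=\Spec A_{fg}$.

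Next I would show that this bijection of bases is an isomorphism of the relevant \emph{covering} structures, so that $\cC$-sheaves on $B^0$ and $\cC$-sheaves on $B$ are literally the same data. The key point is that, because $\rho_A$ is the inclusion of a subspace, every open cover of a basic open $\Spec A_f$ by basic opens $\Spec A_{g_a}$ is the $\rho_A$-pullback of a cover of $\Spec A^0_f$ by the $\Spec A^0_{g_a}$, and conversely. Here I expect to invoke Lemma~\ref{LemXf0}, which says $\Spec A_f\subset\Spec A_g \Leftrightarrow \Spec A^0_f\subset\Spec A^0_g$, to match the covering relations on the two bases exactly: a family $\{\Spec A_{g_a}\}$ covers $\Spec A_f$ iff the corresponding classical family $\{\Spec A^0_{g_a}\}$ covers $\Spec A^0_f$. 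Granting this, the equaliser diagrams defining the sheaf condition on $B$ and on $B^0$ are identified term by term, so restriction along $\rho_A$ induces an equivalence between $\cC$-sheaves on $B^0$ and $\cC$-sheaves on $B$.

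Finally I would assemble the equivalence. Combining the identification of sheaves-on-bases with the equivalence from \ref{Bsheaf} applied to both $X=\Spec A^0$ (with basis $B^0$) and $X=\Spec A$ (with basis $B$) gives the chain
$$\Sh(\Spec A^0,\cC)\;\xrightarrow{\sim}\;\Sh(B^0,\cC)\;\xrightarrow{\sim}\;\Sh(B,\cC)\;\xrightarrow{\sim}\;\Sh(\Spec A,\cC),$$
and one checks that this composite is pullback along $\rho_A$. The last sentence of the statement is then essentially a tautology: under the middle identification the sheaf $\cG$ on the basis $B$ is defined by $\cG(\Spec A_f)=\cF(\Spec A^0_f)$, and the reconstruction equivalence of \ref{Bsheaf} recovers the global sheaf from exactly these values, giving the canonical isomorphism $\cG(\Spec A_f)\simeq\cF(\Spec A^0_f)$.

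The main obstacle I anticipate is the verification in the second step that covers correspond bijectively under $\rho_A$. One has to be slightly careful that a cover of a basic open $\Spec A_f$ in the subspace topology of $\Spec A$ by \emph{basic} opens comes from a genuine open cover of $\Spec A^0_f$ by \emph{basic} opens of $\Spec A^0$; since $\rho_A$ is only a subspace inclusion and not a homeomorphism, the basic open $\Spec A^0_{g_a}$ of $\Spec A^0$ need not be contained in $\Spec A^0_f$ even when $\Spec A_{g_a}\subset\Spec A_f$, so one must replace $g_a$ by $fg_a$ (or argue via Lemma~\ref{LemXf0}) to land inside $\Spec A^0_f$. Once this matching of covers is pinned down via Lemma~\ref{LemXf0} and condition (4) of Hypothesis~\ref{hypo}, the rest is formal.
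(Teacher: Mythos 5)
Your proposal follows the paper's proof step for step: identify $\{\Spec A_f\}$ as a basis via the equivalence of conditions (1)--(4) in Hypothesis~\ref{hypo}, match the bases of $\Spec A$ and $\Spec A^0$ through Lemma~\ref{LemXf0}, and conclude with the sheaves-on-a-basis equivalence of \ref{Bsheaf}; this is exactly the paper's (very short) argument. One small correction: the containment worry you raise at the end is unfounded, since Lemma~\ref{LemXf0} applied to the pair $(g_a,f)$ already gives $\Spec A_{g_a}\subset\Spec A_f\Rightarrow \Spec A^0_{g_a}\subset\Spec A^0_f$, so no replacement of $g_a$ by $fg_a$ is needed.

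The genuine difficulty sits exactly where you point, but your proposed resolution does not close it, because Lemma~\ref{LemXf0} is a statement about containments of \emph{single} basic opens and a union of basic opens is not basic. What is needed is the covering direction: if $\Spec A_f=\bigcup_a\Spec A_{g_a}$, why is $\Spec A^0_f=\bigcup_a\Spec A^0_{g_a}$? Unwinding definitions, this says: if the $g_a$ generate the unit ideal of $A_f$, then they generate the unit ideal of $A^0_f$; equivalently, every prime of $A^0_f$ containing all the $g_a$ is the image under $\rho_{A_f}$ of a prime of $A_f$. This is a surjectivity-type property of $\rho_A$, not an order-theoretic one, and it does not follow from the subspace hypothesis plus the poset isomorphism of bases. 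A purely topological example shows the implication you want is not formal: for $X=\Spec k[x,y]$, $\fm=(x,y)$ and $S=X\setminus\{\fm\}$, the basic opens of $X$ restrict to $S$ bijectively and order-isomorphically, yet the two basic opens attached to $x$ and $y$ cover $S$ without covering $X$, and pullback of sheaves kills the skyscraper at $\fm$, so it is not an equivalence. Hence some tensor-categorical input beyond Lemma~\ref{LemXf0} (in practice, that $\rho_A$ hits the relevant primes --- which holds in all of the paper's applications, where $\rho_A$ is in fact a homeomorphism, e.g.\ under {\bf MN} and {\bf GR}) must be invoked to match the covers. To be fair, the paper's own proof is silent on this very point, so your write-up is faithful to it; but judged as a self-contained argument, the step ``covers correspond'' is a gap that your appeal to Lemma~\ref{LemXf0} does not fill.
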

\begin{proof}
By assumption, $\{\Spec A_f\,|\, f\in A^0\}$ forms a basis of $\Spec A$. By Lemma~\ref{LemXf0}, these bases for $\Spec A$ and $\Spec A^0$ yield isomorphic categories. The conclusion thus follows from~\ref{Bsheaf}.
\end{proof}

\subsection{Locally ringed spaces}

\subsubsection{}\label{DefsLRS} By a locally ringed space in $\cC$ we understand a pair $\bX=(X,\cO)$ of a topological space $X$ and a sheaf of $\bC$-algebras $\cO$ on $X$ such that each stalk $\cO_x$ is a local algebra. A morphism of locally ringed spaces $(X,\cO_X)\to (Y,\cO_Y)$ is a pair $f:X\to Y$, $f^\sharp:\cO_Y\to f_\ast\cO_X$ (a morphism of ringed spaces), for which the canonical composite morphism
$$\cO_{Y,f(x)}\;\to\;\varinjlim_{Y\supset U\ni f(x)}\cO_X(f^{-1}(U))\;\to\;\cO_{X,x}$$
is local for every $x\in X$.
 We have the corresponding category $\mathsf{Lrs}\bC$ of locally ringed spaces. 

For $\cC=\Vecc_k$, we abbreviate $\LRS\cC$ to $\LRS_k$. 

\begin{lemma}\label{LemX0}
The canonical inclusion $\LRS_k\to \LRS\cC$ 
\begin{enumerate}
\item has a left adjoint, $\bX=(X,\cO)\mapsto \bX^0:=(X,\cO^0)$; and
\item has a right adjoint, $\bX=(X,\cO)\mapsto\bX_0:=(X',\cO')$ where $\cO'$ is the sheafification of $U\mapsto \cO(U)_0$ and $X'=\supp \cO'\subset X$.
\end{enumerate} 
\end{lemma}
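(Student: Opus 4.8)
The plan is to derive both adjunctions from the two evident algebra-level adjunctions and then account for the topology and the locality condition. Write $\iota\colon \Alg_k\hookrightarrow \Alg\cC$ for the inclusion induced by $\Vecc\hookrightarrow\cC$ (and, abusively, for the induced functors on sheaves and on locally ringed spaces); recall that $A\mapsto A^0=H^0(A)$ is its right adjoint and that $A\mapsto A_0$, the maximal quotient in $\Alg_k$, is its left adjoint. I will freely use the stalk identity $(\cO^0)_x\simeq(\cO_x)^0$ recorded in the excerpt together with $(\cO')_x\simeq(\cO_x)_0$, the latter because $(-)_0$, being a left adjoint, commutes with the filtered colimit defining the stalk and because sheafification preserves stalks.

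For (1): since $H^0$ is a right adjoint it preserves limits, so the sectionwise $U\mapsto\cO(U)^0$ is already a sheaf and $f_\ast(\cO^0)=(f_\ast\cO)^0$ for any $f$. First I would check $\bX^0\in\LRS_k$: the stalks $(\cO^0)_x=(\cO_x)^0$ are nonzero and local by Lemma~\ref{LemfInv}(2). For the adjunction isomorphism $\Hom_{\LRS_k}(\bX^0,\bY)\cong\Hom_{\LRS\cC}(\bX,\iota\bY)$ I keep the same continuous map $f\colon X\to Y$ on both sides and match the comorphisms through the sectionwise adjunction
$$\Hom(\iota\cO_Y,f_\ast\cO)\;\cong\;\Hom(\cO_Y,(f_\ast\cO)^0)=\Hom(\cO_Y,f_\ast\cO^0).$$
It remains to see that the two locality conditions coincide: a stalk map $\iota B\to A$ with $B=\cO_{Y,f(x)}$, $A=\cO_x$ factors as $B\to A^0\hookrightarrow A$, and since $\fm\cap A^0=\fm^0$ is the maximal ideal of $A^0$ (Lemma~\ref{LemfInv}(2)), the preimage of $\fm$ in $B$ equals the preimage of $\fm^0$; hence $\iota B\to A$ is local iff $B\to A^0$ is. Naturality in $\bX,\bY$ is inherited from the algebra-level adjunction.

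For (2): applying $(-)_0$ sectionwise and sheafifying gives a functor left adjoint to $\iota_\ast$ on ringed spaces over a fixed base (compose the presheaf-level adjunction with sheafification). The geometric input is that the space map of any morphism $(g,g^\sharp)\colon\iota\bY\to\bX$ lands in $X'$: for $y\in Y$ the stalk map $\cO_{X,g(y)}\to\cO_{Y,y}$ has nonzero local target and factors through $(\cO_{X,g(y)})_0=(\cO')_{g(y)}$, which is therefore nonzero, i.e.\ $g(y)\in\supp\cO'=X'$. Writing $g=j\circ g'$ with $j\colon X'\hookrightarrow X$ the inclusion, the comorphism $\cO_X\to g_\ast\iota\cO_Y=\iota_\ast g_\ast\cO_Y$ transposes under $(-)_0\dashv\iota_\ast$ to $\cO'\to g_\ast\cO_Y$ and then, via $j^{-1}\dashv j_\ast$, to $\cO'|_{X'}\to g'_\ast\cO_Y$, which is precisely the data of a morphism $\bY\to\bX_0$; this assignment is reversible, giving $\Hom_{\LRS\cC}(\iota\bY,\bX)\cong\Hom_{\LRS_k}(\bY,\bX_0)$.

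The main obstacle is verifying that $\bX_0\in\LRS_k$, i.e.\ that $A_0$ is a \emph{local} $k$-algebra for every nonzero stalk $A=\cO_x$ (which is local). I would argue as follows. Let $\mathfrak M$ be any maximal ideal of the classical ring $A_0$; the composite $A\tto A_0\tto A_0/\mathfrak M=:K$ is a surjection onto a field in $\Alg_k$, so its kernel is a maximal ideal of $A$ and hence equals $\fm$. Thus $A/\fm\cong K$ is already classical, the surjection $A\to A/\fm$ factors as $A\tto A_0\xrightarrow{\pi} A/\fm$, and $\mathfrak M=\ker(A_0\to A_0/\mathfrak M)=\ker\pi$ is independent of $\mathfrak M$; therefore $A_0$ has the unique maximal ideal $\ker\pi$. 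The same computation shows that $A\tto A_0$ pulls $\ker\pi$ back to $\fm$, which is exactly what is needed for the locality conditions in the displayed adjunction of (2) to match (verbatim as in (1)). With this in hand, the remaining verifications—that $\cO'|_{X'}$ is a sheaf of local algebras and that all bijections are natural—are routine.
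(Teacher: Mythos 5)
Your proof is correct and follows the same route as the paper's (very terse) proof: part (1) is reduced, exactly as in the paper, to checking that the stalks $(\cO_x)^0$ are local and that locality of stalk maps is preserved, both via Lemma~\ref{LemfInv}(2), with the rest being adjunction bookkeeping. The paper dismisses part (2) as a standard exercise; your argument fills it in correctly, and in particular you rightly identify and prove the one non-routine point, namely that for a local algebra $A$ with $A_0\neq 0$ every maximal ideal of $A_0$ arises as the kernel of the factorisation $A_0\to A/\fm$ (so $A_0$ is local and $A\tto A_0$ is a local morphism), which is what makes the locality conditions on the two sides of the adjunction match.
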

\begin{proof}
The only thing to verify for part (1) is that $\bX^0$ is a {\em locally} ringed space and that relevant morphisms to $\bX^0$ are local. These are direct consequences of Lemma~\ref{LemfInv}(2).

Part (2) is a standard exercise.
\end{proof}

\subsubsection{}\label{DefXf} Consider $\bX=(X,\cO)\in\LRS\cC$. For $f\in \Gamma(X,\cO)^0$, we define 
\begin{eqnarray*}X_f&=&\{x\in X\,|\,\mbox{$f$ is not in the maximal ideal of $\cO_{x}$}\}\\
&=&\{x\in X\,|\,\mbox{$f$ is invertible in $\cO_{x}^0$}\},
\end{eqnarray*}
where equality of the two characterisations follows from Lemma~\ref{LemfInv}(1).
This is an open subset of $X$, which follows for instance from the classical case by considering $\bX^0$. 

We denote by $\bX_f$ the locally ringed space $(X_f,\cO|_{X_f})$.

\begin{remark}
By adapting the first line of the definition of $X_f$, we can define more generally an open subspace of $X$ for every $\cC\ni V\subset \Gamma(X,\cO)$.
\end{remark}

\subsubsection{} For a locally ringed space $\bX=(X,\cO)$, we can also define in a straightforward way the (abelian) category of $(X,\cO)$-modules in $\Ind\cC$. For $N\in\Ind\cC$ and an $(X,\cO)$-module $\cM$, we get a canonical isomorphism
\begin{equation}\label{GlobSec}\Hom_{\cO}(\cO\otimes N,\cM)\;\xrightarrow{\sim}\;\Hom(N,\Gamma(X,\cM)).\end{equation}

An $(X,\cO)$-module $\cM$ is {\bf quasi-coherent} if for every $x\in X$ there is an open neightbourhood $x\in U\subset X$ for which there exists a presentation
$$\cO|_U\otimes M_1\to \cO|_U\otimes M_0\to \cM|_U\to 0$$
of $\cO|_U$-modules, for certain $M_0,M_1\in\Ind\bC$. We denote the corresponding full subcategory of quasi-coherent sheaves by $\QCoh_{\cC}\bX$.

\begin{prop}\label{LemQCoh1}
\begin{enumerate}
\item For a $k$-scheme $\bX$, interpreted as an object in $\LRS_k\subset\LRS\cC$, an $\bX$-module is quasi-coherent if and only if there exists an open affine cover such that for each $U\subset X$ in the cover and $f\in \cO_{\bX}(U)$, the restriction map yields an isomorphism $\cM(U)_f\to\cM(U_f)$.
\item For an affine $k$-scheme $\bX$ with algebra of global sections $R$,
$$\Gamma: \QCoh_{\bC}\bX\;\to\; \Mod_{\bC}R$$
is an equivalence.
\end{enumerate}
\end{prop}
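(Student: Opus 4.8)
The plan is to transport the classical equivalence between quasi-coherent sheaves on an affine scheme and modules over its coordinate ring to the $\cC$-coefficient setting, exploiting that $\bX$ is a genuine $k$-scheme: under the inclusion $\LRS_k\to\LRS\cC$ of Lemma~\ref{LemX0}, the underlying space and the structure sheaf $\cO$ (valued in $\Vecc_k\subset\cC$, so that $\cO(U)=\cO(U)^0$) are classical, and the only novelty is that the module sheaves take values in $\Ind\cC$. Two enabling facts make the classical arguments go through. First, for $f\in\cO(U)=\cO(U)^0$, localisation $(-)_f=(-)\otimes_{\cO(U)}\cO(U)_f$ is exact on $\Ind\cC$-valued $\cO(U)$-modules, since $\cO(U)_f$ is flat over $\cO(U)$ and $\otimes$ is exact in a tensor category. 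Second, for a free sheaf $\cO\otimes M$ with $M\in\Ind\cC$ one has $\Gamma(V,\cO\otimes M)\cong\cO(V)\otimes M$ and $(\cO(V)\otimes M)_f\cong\cO(V_f)\otimes M$ whenever $V$ is quasi-compact, because $-\otimes M$ is exact and commutes with the finite biproducts and filtered colimits entering the classical computation.

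For part~(1), the forward implication proceeds by restricting a local presentation $\cO|_U\otimes M_1\to\cO|_U\otimes M_0\to\cM|_U\to 0$ to a principal open $U_f$ and comparing it with its localisation at $f$. On the two free terms both operations are computed by the displayed formulas above, where they visibly agree; a diagram chase (five lemma) over the resulting right-exact rows then yields the isomorphism $\cM(U)_f\xrightarrow{\sim}\cM(U_f)$. For the converse, on each affine $U=\Spec\cO(U)$ in the cover I would set $M:=\cM(U)$ and choose a presentation $\cO(U)\otimes M_1\to\cO(U)\otimes M_0\to M\to 0$ of $M$ as an $\cO(U)$-module in $\Ind\cC$ (building $M_0,M_1$ as the underlying objects of successive surjections coming from the action maps), and then sheafify it; the hypothesis $\cM(U)_g\cong\cM(U_g)$ identifies the sections of the sheafification with those of $\cM$ on the basis $\{U_g\}$, so the induced map of sheaves is an isomorphism and $\cM|_U$ acquires the required presentation.

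Part~(2) is then largely formal. The quasi-inverse sends an $R$-module $M$ in $\Ind\cC$ to the sheaf $\tilde M$ defined on the basis of principal opens of $X=\Spec R$ by $\tilde M(X_f):=M_f$, with the evident restriction maps. The first task is to verify that $\tilde M$ is a $\cC$-sheaf on this basis in the sense of~\ref{Bsheaf}: the classical partition-of-unity argument carries over verbatim, as it only manipulates the $R$-module structure and the elements $f_i\in R=R^0$ through the exact functors $(-)_{f_i}$, and $\Ind\cC$ is a Grothendieck category. That $\tilde M$ is quasi-coherent follows from part~(1) (or directly from a presentation of $M$), while $\Gamma(\tilde M)=\tilde M(X_1)=M$ gives $\Gamma\circ\widetilde{(-)}\cong\mathrm{Id}$ naturally. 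Conversely, for $\cM\in\QCoh_\cC\bX$ the module structure supplies via~\eqref{GlobSec} a natural comparison $\widetilde{\Gamma(\cM)}\to\cM$, and part~(1)---whose isomorphism, by the standard refinement of one good cover to all principal opens, holds for every $f\in R$---gives $\widetilde{\Gamma(\cM)}(X_f)=\Gamma(\cM)_f=\cM(X)_f\cong\cM(X_f)$ compatibly with restriction, whence $\widetilde{\Gamma(\cM)}\cong\cM$.

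The main obstacle I anticipate is the converse of part~(1): verifying that the local data $\cM(U)_g\cong\cM(U_g)$ genuinely reconstruct $\cM$ as a sheaf carrying a presentation. Concretely, one must know that global sections of the sheafified presentation over a quasi-compact open are again computed by $\Gamma(V,\cO\otimes M_i)\cong\cO(V)\otimes M_i$, and that the surjection of sections is preserved, so that the comparison morphism is an isomorphism on \emph{all} of the basis and not merely stalkwise. This is exactly where the exactness of $-\otimes M_i$ and its commutation with the finite products appearing in the sheaf condition over quasi-compact opens must be invoked; once this is secured, the argument is the classical one of Hartshorne-type run internally to $\Ind\cC$. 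A secondary point requiring care is that part~(1) only asserts the existence of one good affine cover, so the propagation of the condition to every principal open---needed for the affine case $U=X$ in part~(2)---must be justified by the usual refinement, which is again diagrammatic and $R$-linear and hence transports to the $\cC$-coefficient setting.
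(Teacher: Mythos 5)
Your proposal is correct and follows essentially the same route as the paper: part (2) via the construction $\widetilde{M}(X_f)=M_f$ on the basis of principal opens, with the sheaf axiom checked by reduction to finite generating families $\{f_a\}$ and the partition-of-unity argument (which the paper carries out explicitly in $\Ind\cC$ by testing against objects $X\in\cC$ and using $M_f=\varinjlim M$), and part (1) from exactness of localisation applied to local presentations, which then yields $\widetilde{\Gamma(\cM)}\simeq\cM$. The paper compresses part (1) to one line, whereas you spell out both directions; the extra details you supply (the five-lemma comparison and the reconstruction of $\cM|_U$ from a presentation of $\cM(U)$) are consistent with the paper's intended argument.
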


\begin{proof}
Part (1) follows from exactness of localisation.

We have the basis $\{\Spec R_f\,|\, f\in R\}$ for the topology on $X=\Spec R$. By \ref{Bsheaf}, from $M\in\Mod_{\cC}R$ we can define a sheaf $\widetilde{M}$ by specifying $\widetilde{M}(U_f)\;=\; M_f$, with same restriction maps as in the classical case. As in the classical case, see for instance \cite[Chapter I]{EH}, proving that this construction produces a sheaf can be reduced to proving that, for a finite set $\{f_a\}\subset R$ which generates $R$ (as an ideal), we have an equaliser
$$M\to\prod_a M_{f_a}\rightrightarrows \prod_{a,b} M_{f_af_b}.$$

If $\bC\ni X\subset M$ is sent to zero by the first map, then (using $M_f=\varinjlim M$, for the direct limit along $f:M\to M$) we find for each $a$ some $i_a\in\mN$ for which $f_a^{i_a}( X)=0$. Since $\{f_a^{i_a}\,|\,a\}$ still generates $R$, this implies $X=0$.

Next, consider a morphism $\bC\ni X\to \prod_a M_{f_a}$ for which the two compositions with the morphisms to $\prod_{a,b} M_{f_af_b}$ become equal. By finiteness of $\{a\}$, we know there exists $N\in\mN$ for which each $X\to M_{f_a}$ is induced from some $j_a:X\to M$, with $M$ placed in the $N$-th position in the chain defining $M_{f_a}=\varinjlim M$. By replacing $N$ by a larger number if necessary, the equalising property implies that $f_a^N\circ j_b=f_b^N\circ j_a$. There exist $e_a\in R$ with $1=\sum_a e_a f_a^N$. It now follows easily that $X\to \prod_a M_{f_a}$ factors through $j:= \sum_a e_a j_a: X\to M$, concluding the proof that $\widetilde{M}$ is a sheaf.

The association $M\mapsto \widetilde{M}$ provides an inverse to $\Gamma$. Indeed, $\Gamma(\widetilde{M})\simeq M$ is by definition, and $\widetilde{\Gamma(\cM)}\simeq \cM$ follows from part (1).
\end{proof}


\section{Faisceaux and equivariant geometry}\label{SecFais}

\subsection{Grothendieck topologies}

\subsubsection{}

As in the classical case, we can equip the category $\Aff\cC:=(\Alg\cC)^{\op}$ with various Grothendieck (pre)topologies (in order to work with an essentially small category, we can restrict the sizes of the allowed algebras). We will only mention and use two in the current paper. We denote by $\Fun\cC$ the category of presheaves on $\Aff\cC$, meaning functors $\Alg\cC\to\Set$. We abbreviate $\Fun_k=\Fun\Vecc_k$. 

For a pretopology $T$ on $\Aff\cC$, we denote by 
$$\Fais_T\cC \subset \Fun\cC$$
the category of $T$-sheaves, which we will call $T$-faisceaux to distinguish from sheaves on topological spaces with values in~$\Ind\cC$. In case of the fpqc pretopology below, we leave out reference to $T$.

\begin{definition}
The covers in the {\bf fpqc pretopology} (`flat topology' in the terminology of the more general setting in \cite{TV}) on $\Aff\cC$ correspond to the finite collections of algebra morphisms $\{A\to B_i\,|\, i\in I\}$ for which $A\to \prod_i B_i$ is faithfully flat.
\end{definition} 

Hence, $F\in \Fun\cC$ belongs to $\Fais\cC$ if and only if $F$ commutes with finite products and if 
$$F(A)\to F(B)\rightrightarrows F(B\otimes_AB)$$
is an equaliser for every faithfully flat algebra morphism $A\to B$.


\subsubsection{}\label{Zar} We can impose the following conditions on $A\to B$ in~$\Alg\cC$:
\begin{enumerate}
\item $A\to B$ is an epimorphism in~$\Alg\cC$;
\item $A\to B$ is flat;
\item $A\to B$ is finitely presented.
\end{enumerate}  
The obvious example of a morphism satisfying all three conditions is $A\to A_f$ for some $f\in A^0$.

Following \cite{TV} we can then define the following pretopology:
\begin{definition}
The covers in the {\bf Zariski pretopology} on $\Aff\cC$ correspond to the finite collections of algebra morphisms $\{A\to B_i\,|\, i\in I\}$ for which $A\to \prod_i B_i$ is faithfully flat and each $A\to B_i$ satisfies \ref{Zar}(1)-(3).
\end{definition}

We denote the category of sheaves for the Zariski pretopology by $\Fais_z\cC$ and observe that, by definition, it contains $\Fais\cC$.
In convenient settings the Zariski pretopology reduces to a more simplistic one. Note that for a collection of elements $f_i\in A^0$, the morphism $A\to \prod_iA_{f_i}$ is faithfully flat if and only if the $f_i$ generate $A$ (as an $A$-module), by \cite[Proposition~4.3.1]{ComAlg}.
\begin{prop}\label{PropTV}
Under hypothesis~\ref{hypo}, the (Grothendieck) topology generated by the Zariski pretopology is the same as the topology generated by the pretopology corresponding to finite collections $\{A\to A_{f_i}\}$ for which the $f_i\in A^0$ generate $A$.
\end{prop}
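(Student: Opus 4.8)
The plan is to show that the two pretopologies refine each other, so that they generate the same Grothendieck topology. Write $P$ for the Zariski pretopology and $P_0$ for the pretopology whose covers are the finite families $\{A\to A_{f_i}\}$ with $f_i\in A^0$ generating $A$. One inclusion is immediate: each localisation $A\to A_f$ is an epimorphism, flat and finitely presented, and $\{A\to A_{f_i}\}$ is jointly faithfully flat exactly when the $f_i$ generate $A$ (by \cite[Proposition~4.3.1]{ComAlg}), so every $P_0$-cover is a $P$-cover and the topology generated by $P_0$ is contained in that generated by $P$. For the reverse inclusion I would show that every $P$-cover is refined by a $P_0$-cover; since a cover admitting a covering refinement is itself covering, this forces the topology generated by $P$ into that generated by $P_0$.

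The heart of the argument is a factorisation criterion, which I would establish first. For $f\in A^0$ and a member $A\to B$ of the $P$-cover, I claim $A\to A_f$ factors through $A\to B$ precisely when $\Spec A_f$ lies in the image $U:=\mathrm{im}(\Spec B\to\Spec A)$. Base-changing the epimorphism $A\to B$ along the flat map $A\to A_f$ gives a flat epimorphism $A_f\to B\otimes_A A_f$, whose spectrum surjects onto $\Spec A_f$ exactly when $\Spec A_f\subseteq U$. A faithfully flat epimorphism is an isomorphism: by faithfully flat descent it is the equaliser of the two coprojections $B\otimes_A A_f \to (B\otimes_A A_f)\otimes_{A_f}(B\otimes_A A_f)$, which coincide by the epimorphism property. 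Hence $\Spec A_f\subseteq U$ yields $A_f\cong B\otimes_A A_f$ and therefore a factorisation $A\to B\to A_f$.

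With this in hand I would conclude as follows. Given a $P$-cover $\{A\to B_i\}_{i\in I}$, set $U_i:=\mathrm{im}(\Spec B_i\to\Spec A)$; faithful flatness of $A\to\prod_i B_i$ makes $\coprod_i\Spec B_i\to\Spec A$ surjective, so the $U_i$ cover $\Spec A$. For each prime $\fp$, choosing $i$ with $\fp\in U_i$ and using that the $\Spec A_f$ form a basis (Hypothesis~\ref{hypo}(4)) with $U_i$ open, I obtain $f\in A^0$ with $\fp\in\Spec A_f\subseteq U_i$. Thus the basic opens $\Spec A_f$ contained in some $U_i$ cover $\Spec A$, so the corresponding $f$ generate $A$ as a module; since $\unit$ is compact in $\Ind\cC$ and $A$ is generated by its unit, finitely many $f_1,\dots,f_n$ already generate $A$. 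By the factorisation criterion each $A\to A_{f_k}$ factors through some $A\to B_i$, so $\{A\to A_{f_k}\}$ is a $P_0$-cover refining $\{A\to B_i\}$, as required.

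The main obstacle is the single geometric input I have not justified: that the image $U=\mathrm{im}(\Spec B\to\Spec A)$ of a flat, finitely presented morphism is open in $\Spec A$. This is exactly where finite presentation is used, and it is the expected behaviour of the Zariski opens of \cite{TV}, which I would cite. Everything else reduces to the factorisation criterion above together with the basis property of Hypothesis~\ref{hypo}, both elementary given the localisation theory available under that hypothesis.
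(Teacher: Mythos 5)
Your overall strategy (mutual refinement of the two pretopologies) matches the paper's, and your factorisation criterion is sound as far as it goes: the descent argument that a flat epimorphism which is surjective on spectra is faithfully flat and hence an isomorphism is exactly the paper's combination of \cite[Proposition~4.3.1]{ComAlg} and \cite[Remark~2.3.4]{ComAlg}. However, the step you yourself flag as unjustified --- that $U=\mathrm{im}(\Spec B\to\Spec A)$ is \emph{open} for $A\to B$ flat, finitely presented and an epimorphism --- is not a citable side fact: it is the mathematical core of the proposition, and your proposed reference cannot supply it. The ``Zariski opens'' of \cite{TV} are defined purely functorially (as the flat, finitely presented epimorphisms themselves); \cite{TV} makes no statement about topological spectra of algebras in a tensor category, since constructing such a geometric companion to \cite{TV} is precisely the purpose of the present paper. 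Nor can you import the classical proof that flat finitely presented morphisms are open (Chevalley constructibility plus stability under generisation), as none of that machinery is available under Hypothesis~\ref{hypo} alone. Since your covering argument hinges on finding, for each $\fp\in U_i$, a basic open $\Spec A_f$ with $\fp\in\Spec A_f\subseteq U_i$, the proof collapses without this input.

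The repair is essentially the paper's own argument, which proves the needed neighbourhood statement by hand rather than invoking openness: for a maximal ideal $\fm<B$ with preimage $\fp<A$, the induced local morphism $A_\fp\to B_\fm$ is a flat local epimorphism, hence an isomorphism (same two citations as above); and --- this is where finite presentation actually enters --- writing $A_\fp=\varinjlim_{f\notin\fp^0}A_f$ one descends this to an isomorphism $A_f\xrightarrow{\sim}B_f$ for some $f\in A^0\setminus\fp^0$. This simultaneously produces the basic open $\Spec A_f\subseteq U$ and the factorisation $A\to B\to A_f$, so your separate factorisation criterion becomes unnecessary. (The paper phrases this as: the set $S$ of $f$ with $A_f\xrightarrow{\sim}B_f$ has image generating $B$, which suffices for the refinement without ever discussing openness of $U$, and only requires the argument at preimages of maximal ideals of $B$.) Two smaller gaps in your write-up: the assertion that faithful flatness of $A\to\prod_iB_i$ makes $\coprod_i\Spec B_i\to\Spec A$ surjective needs justification under Hypothesis~\ref{hypo} (it follows from localisation at primes, \cite[Proposition~5.4.4]{ComAlg}, but is not free --- the paper avoids it by working with maximal ideals of the $B_i$ directly), and you tacitly identify primes of the finite product $\prod_iB_i$ with primes of the factors.
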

\begin{proof}
If suffices to prove that every covering in the Zariski pretopology can be refined to one of the form $\{A\to A_{f_i}\}$.

We consider a morphism $A\to B$ satisfying the conditions in \ref{Zar}.
Let $S\subset A^0$ be the subset of $f\in A^0$ for which the induced
$$A_f\to B_f$$
(where we also write `$f$' for its image in~$B^0$) is an isomorphism.
We claim that the image of~$S$ in~$B^0$ generates $B$. The conclusion then follows easily.

Assume the contrary of the claim. To obtain a contradiction we will freely use \cite[Proposition~5.4.4]{ComAlg} concerning localisation at prime ideals. As the image of $S$ does not generate $B$, there exists a maximal ideal $\fm<B$ with preimage $\p<A$ such that $S\subset \p^0$. Now the induced local morphism
$$A_{\p}\to B_{\fm}$$
is still an epimorphism (since $B\to B_{\fm}$ is an epimorphism). It is also still flat, so by \cite[Proposition~4.3.1]{ComAlg} faithfully flat. By \cite[Remark~2.3.4]{ComAlg}, the morphism is thus an isomorphism. We can then draw the isomorphism and its inverse in the following solid commutative diagram
$$\xymatrix{
A_{\fp}\ar[r]^{\sim}& B_{\fm}\ar[r]^{\sim}& A_{\fp}\\
A\ar[u]\ar[r]&B\ar[u]\ar@{-->}[r]&A_f.\ar[u]
}$$
Since $A\to B$ is finitely presented, `$A_{\fp}=\varinjlim A_f$' shows that there must be some $f\in A^0\backslash \fp^0$ so that $B\to A_{\fp}$ factors as above (and the composite of the lower row is the localisation $A\to A_f$). Localising the lower row thus gives morphisms
$$A_f\to B_f\to A_f$$
 which compose to the identity. Using that $A_f\to B_f$ is an epimorphism shows that also $B_f\to A_f\to B_f$ composes to the identity. But by assumption $f\not\in\fp^0\supset S$, a contradiction.
\end{proof}

\begin{lemma}\label{LemTech} Let $\cC$ be a tensor category satisfying Hypothesis~\ref{hypo}.
Let $\phi:F\to G$ be a monomorphism $\Fais_z\cC$ which evaluates to an isomorphism on every local algebra. Assume that $F$ commutes with direct limits and for every directed system $A_\alpha$ in $\Alg\cC$, the map $\varinjlim_\alpha G(A_\alpha)\to G(\varinjlim_\alpha A_\alpha)$ is injective. Then $\phi$ is an isomorphism. 
\end{lemma}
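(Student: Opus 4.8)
The plan is to prove that $\phi_A\colon F(A)\to G(A)$ is bijective for every $A\in\Alg\cC$, which forces $\phi$ to be an isomorphism of $\Fais_z\cC$. Since $\phi$ is a monomorphism of set-valued sheaves, each $\phi_A$ is automatically injective, so the only thing to establish is surjectivity: given $A$ and $s\in G(A)$, I must exhibit $t\in F(A)$ with $\phi(t)=s$. The strategy is the usual Zariski-local one, adapted to the present setting: solve the problem after localising at each prime, spread each local solution out to a genuine principal open $\Spec A_f$, extract a finite subcover, and glue using that $F$ and $G$ are Zariski sheaves.

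The crux is the passage from stalks to opens, and this is where the two extra hypotheses are meant to do their work. Fix a prime $\fp<A$. The localisation $A_\fp=A\otimes_{A^0}A^0_{\fp^0}$ is a local algebra (this is the good localisation theory available under Hypothesis~\ref{hypo}), so $\phi_{A_\fp}$ is bijective and there is a unique $t_\fp\in F(A_\fp)$ with $\phi(t_\fp)=s|_{A_\fp}$. I would then write $A_\fp=\varinjlim_{f\notin\fp^0}A_f$ as the filtered colimit obtained by base-changing $A^0_{\fp^0}=\varinjlim_f A^0_f$ to $A$. Because $F$ commutes with direct limits, $F(A_\fp)=\varinjlim_f F(A_f)$, so $t_\fp$ descends to some $t_f\in F(A_f)$ with $f\notin\fp^0$. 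At this point $\phi(t_f)$ and $s|_{A_f}$ are two elements of $G(A_f)$ with equal image in $G(A_\fp)$; invoking the injectivity of $\varinjlim_f G(A_f)\to G(\varinjlim_f A_f)=G(A_\fp)$, they already agree in the colimit $\varinjlim_f G(A_f)$, hence become equal in $G(A_{f'})$ after a further localisation $A_f\to A_{f'}$ with $f'\notin\fp^0$. Replacing $f$ by $f'$, I obtain for each $\fp$ an element $f_\fp\in A^0\setminus\fp^0$ together with $t_{f_\fp}\in F(A_{f_\fp})$ satisfying $\phi(t_{f_\fp})=s|_{A_{f_\fp}}$. I expect this spreading-out to be the main obstacle: descending $t_\fp$ to an open requires $F$ to preserve colimits, while promoting the coincidence of $\phi(t_f)$ and $s|_{A_f}$ — valid a priori only in the local algebra $G(A_\fp)$ — to a coincidence over some honest open requires exactly the injectivity hypothesis on $G$; neither is automatic, and the statement is engineered so that both are available.

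It remains to globalise. Since $f_\fp\notin\fp^0$ gives $\fp\in\Spec A_{f_\fp}$, and the $\Spec A_f$ form a basis of $\Spec A$ under Hypothesis~\ref{hypo}(4), the family $\{\Spec A_{f_\fp}\}_{\fp}$ covers $\Spec A$. Consequently the ideal $\sum_{\fp}Af_{\fp}$ lies in no prime of $A$ and therefore equals $A$; as $\unit$ is a compact object of $\Ind\cC$, the unit section factors through a finite partial sum, so already finitely many $f_1,\dots,f_n$ generate $A$ as a module. Then $\{A\to A_{f_i}\}_{i=1}^n$ is a cover in the Zariski pretopology (each map is an epimorphism, flat and finitely presented, and $A\to\prod_iA_{f_i}$ is faithfully flat by \cite[Proposition~4.3.1]{ComAlg}). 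On the overlaps $A_{f_i}\otimes_AA_{f_j}=A_{f_if_j}$ the sections $t_i$ and $t_j$ have equal image under $\phi$, namely $s|_{A_{f_if_j}}$, hence coincide by injectivity of $\phi$; since $F$ is a Zariski sheaf they glue to a unique $t\in F(A)$ with $t|_{A_{f_i}}=t_i$. Finally $\phi(t)$ and $s$ agree on every member of the cover, so they are equal because the Zariski sheaf $G$ is separated. This yields surjectivity of $\phi_A$ and completes the proof.
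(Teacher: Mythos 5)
Your proposal is correct and follows essentially the same route as the paper's proof: localise at each prime (the paper uses maximal ideals), use the isomorphism on local algebras, spread the local solution and the coincidence of sections out to a principal open via the two colimit hypotheses, extract a finite family $\{f_i\}$ generating $A$ by compactness of $\unit$, and glue over the resulting Zariski cover. The only cosmetic difference is that you carry out the final gluing and separatedness argument explicitly, where the paper delegates it to \cite[Lemma~2.4.3]{ComAlg}.
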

\begin{proof}
We consider $A\in\Alg\cC$ and the map
$$\phi_A:\; F(A)\to G(A).$$
We fix $x\in G(A)$. For every maximal ideal $\fm<A$, we can consider the image $x_{\fm}$ of $x$ in $G(A_{\fm})$. By assumption, $x_{\fm}$ is the image of an element in $F(A_{\fm})$. Also by assumption, this element is in turn the image of an element in $F(A_f)$, for some $\fm^0\not\ni f\in A^0$. We can consider the image $x_f$ of the latter in $G(A_f)$. By construction, this $x_f$ goes to $x_{\fm}$ in $G(A_{\fm})$, hence, by assumption, the images of $x_f$ and $x$ in some $G(A_g)$ (with $\fm^0\not\ni g\in A^0$) must be equal, and be in the image of $\phi_{A_g}$. 

The above yields in particular a (non-unique) assignment $\fm\mapsto g_{\fm}$ from maximal ideals in $A$ to elements of $A^0$, with $g_{\fm}\not\in \fm^0$. Hence there is no maximal ideal in $A$ which contains all $\{g_{\fm}\}$. The same is thus true for a finite subset of $\{g_{\fm}\}$. This yields a Zariski cover $\{A\to A_{g_{\fm}}\}$. The conclusion thus follows from \cite[Lemma~2.4.3]{ComAlg}.
\end{proof}

\subsection{Example: Projective space}

We fix $X\in\cC$.

\begin{definition}
The functor $\mP_X\in\Fun\cC$ is given by
$$A\mapsto \mP_X(A)\,=\,\{A\otimes X^\ast\tto\cL\}/\simeq$$
where $\cL$ runs over invertible $A$-modules and the equivalence relation is given by isomorphisms of invertible $A$-modules yielding commutative triangles.
\end{definition}

\begin{remark}\label{RemPX}
We can equivalently describe $\mP_X(A)$ as the collection of submodules $M\subset A\otimes X^\ast$ for which $(A\otimes X^\ast)/M$ is invertible, or the collection of invertible submodules $\cL\subset A\otimes X$ for which $(A\otimes X)/\cL$ is rigid.
\end{remark}

\begin{example}
We have
$$\mP_X(\unit)\;=\; \bigsqcup_{L}\mP(\Hom(L,X)),$$
where the index runs over isomorphism classes of invertible objects $L$ in $\cC$, and $\mP(V)$ is the ordinary projective space associated to a $k$-vector space $V$.
\end{example}

\begin{prop}
We have $\mP_X\in\Fais\cC$.
\end{prop}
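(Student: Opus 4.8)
The plan is to verify the two conditions characterising membership in $\Fais\cC$: that $\mP_X$ commutes with finite products, and that for every faithfully flat $A\to B$ the diagram $\mP_X(A)\to\mP_X(B)\rightrightarrows\mP_X(B\otimes_AB)$ is an equaliser of sets. Throughout I would work with the reformulation from Remark~\ref{RemPX}, describing $\mP_X(A)$ as the set of subobjects $M\subset A\otimes X^\ast$ for which $(A\otimes X^\ast)/M$ is invertible. Passing to kernels turns the equivalence relation $\simeq$ into honest equality of subobjects, which is what makes both conditions tractable: an element of $\mP_X(A)$ is literally a subobject, so equality of classes becomes equality of subobjects.

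For finite products I would use that the product of $A_1,\dots,A_n$ in $\Alg\cC$ has underlying object $\bigoplus_i A_i$ together with orthogonal idempotents $e_i\in(A_1\times\cdots\times A_n)^0$. These idempotents split every module, and hence every subobject of $(A_1\times\cdots\times A_n)\otimes X^\ast=\bigoplus_i(A_i\otimes X^\ast)$, into a direct sum indexed by the factors; moreover such a direct sum is invertible over the product exactly when each component is invertible over the corresponding factor. This gives $\mP_X(\prod_iA_i)\cong\prod_i\mP_X(A_i)$, with the nullary case handled by noting that the terminal (zero) algebra forces $\mP_X$ to a one-point set.

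For the equaliser condition I would invoke faithfully flat descent for modules in $\cC$ from \cite{ComAlg}. The key point is that descent of \emph{subobjects} of a fixed module is automatic from descent of modules, with no separate cocycle verification: for $A\to B$ faithfully flat, the assignment $M_0\subset A\otimes X^\ast\mapsto M_0\otimes_AB\subset B\otimes X^\ast$ (an inclusion since $B$ is flat) is a bijection onto those subobjects $M\subset B\otimes X^\ast$ satisfying $p_1^\ast M=p_2^\ast M$ inside $(B\otimes_AB)\otimes X^\ast$, where $p_1,p_2$ are the two coprojections. Since equality of classes in $\mP_X(B\otimes_AB)$ is precisely equality of the associated subobjects, the equaliser of $\mP_X(B)\rightrightarrows\mP_X(B\otimes_AB)$ is exactly the set of such descent data, and injectivity of $\mP_X(A)\to\mP_X(B)$ is part of the same bijection. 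It then remains only to match the invertibility conditions on the two sides: for $M=M_0\otimes_AB$ one has $(B\otimes X^\ast)/M\cong((A\otimes X^\ast)/M_0)\otimes_AB$, so I would close the argument by quoting that invertibility of an $A$-module can be detected after a faithfully flat base change.

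I expect the main obstacle to be essentially bookkeeping: locating and citing the correct tensor-categorical descent statements, namely faithfully flat descent for modules and faithfully flat descent of the property of being invertible. Once these are available, the proof is a formal transcription of the classical argument that quotient-line-bundle functors are fpqc sheaves, with the kernel reformulation of Remark~\ref{RemPX} absorbing the equivalence relation. The single point deserving care is confirming that subobject descent reproduces the equaliser exactly rather than something larger, i.e.\ that $\simeq$ is compatible with base change; the kernel description makes this transparent, since the class of a quotient is determined by its kernel and kernels commute with the flat pullbacks $p_i^\ast$.
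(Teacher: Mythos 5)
Your proposal is correct, and it rests on the same engine as the paper's proof: faithfully flat descent for modules and for invertibility. The packaging, however, is genuinely different, and in a way worth noting. The paper works with the quotient presentation $p:B\otimes X^\ast\tto\cL$, extracts a descent datum on $\cL$ from the equaliser hypothesis, and disposes of the cocycle condition and the compatibility of $p$ with descent data by the observation that both follow automatically from a commutative square because $p$ is an epimorphism; it then invokes module descent and descent of invertibility from \cite[\S 4.10]{Brand} and \cite[Proposition~4.10.3]{Brand}. You instead run the whole argument through the kernel interpretation of Remark~\ref{RemPX}, so that elements of $\mP_X(-)$ are honest subobjects and the equaliser condition becomes literally $p_1^\ast M=p_2^\ast M$; descent of subobjects of a fixed module then requires no cocycle data at all, which is the mirror image of the paper's epimorphism trick. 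What your route buys is that descent data never appear explicitly; what it costs is that you must supply submodule descent as a lemma (it is a standard consequence of module descent, since the canonical descent datum on $B\otimes X^\ast$ restricts to the subobject and faithful flatness reflects monomorphisms, or it can be proved directly via the preimage construction), plus the observation that the quotient of the descended submodule base-changes to the given quotient, which you correctly note follows from flatness of the invertible quotient. One point of bookkeeping to fix: the tensor-categorical descent statements you need are in Brandenburg \cite{Brand} (module descent in \S 4.10, descent of invertibility in Proposition~4.10.3), not in \cite{ComAlg}, whose Remark~2.3.4 only gives the equaliser property of $A\to B\rightrightarrows B\otimes_AB$ itself.
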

\begin{proof}
That $\mP_X$ commutes with finite products is clear. For a faithfully flat algebra morphism $A\to B$, the induced function
$\mP_X(A)\to\mP_X(B)$ is injective, by interpretation of $\mP_X(-)$ as in Remark~\ref{RemPX}.

Finally, an element in $\mP_X(B)$ which is sent to the same element in $\mP_X(B\otimes_AB)$ under both $B\rightrightarrows B\otimes_AB$ is represented by an invertible $B$-module $\cL$ equipped with $p:B\otimes X^\ast\tto \cL$ and an isomorphism as in the top row of
$$\xymatrix{B\otimes_A\cL\ar[rr]^{\sim}&&B\otimes_A\cL\\
B\otimes_AB\otimes X^\ast\ar@{->>}[u]^{B\otimes_Ap}\ar[rr]^{\sim}&&B\otimes_AB\otimes X^\ast\ar@{->>}[u]^{B\otimes_Ap} ,}$$
which exchanges the two $B\otimes_AB$-module structures (related by the flip map). Moreover the square is commutative where the lower horizontal arrow is given by the flip map on $B\otimes_AB$. 

We need to show that the top isomorphism satisfies the cocycle condition and that $p$ induces a morphism of descent data, for the obvious descent data on $B\otimes X^\ast$. As $p$ is an epimorphism, both conditions follow automatically from the commutative square.

It now follows from faithfully flat descent, see \cite[\S 4.10]{Brand} that $\cL\simeq B\otimes_A\cL'$ for some invertible (see \cite[Proposition~4.10.3]{Brand}) $A$-module $\cL'$ and $p$ corresponds to $B\otimes_Ap'$ for some $p':A\otimes X\tto\cL'$.
\end{proof}


\subsection{Open covers}

\subsubsection{}
For an algebra $A\in\Alg\cC$ we denote the corresponding representable functor by
$$\Phi_A=\Alg\cC(A,-)\in \Fun\cC. $$
For an ideal $I<A$ we have the subfunctor $\Phi^I_A\subset \Phi_A$. Here $\Phi_A^I(B)$ comprises those algebra morphisms $\phi:A\to B$ such that $B\phi(I)=B$, or equivalently $\Spec B\to\Spec A$ takes values in the complement of $V(I)$.

Note that $\Phi_A$, and more generally $\Phi_A^I$, belongs to $\Fais\cC$, by \cite[Remark~2.3.4]{ComAlg} and \cite[Proposition~4.3.1]{ComAlg}.

\subsubsection{}\label{OpenFun} For $F\in\Fun\cC$, a subfunctor $G\subset F$ is called {\bf open} if for each $A\in \Alg\cC$ and element of~$F(A)$, the corresponding subfunctor $G\times_F\Phi_A$ of $\Phi_A$ is of the form $\Phi_A^I$ for some ideal $I<A$. A direct computation shows that the open subfunctors of the representable functors $\Phi_A$ are precisely the subfunctors $\Phi_A^I\subset \Phi_A$.

\begin{remark}
\label{RemOpen}
\begin{enumerate}
\item For an ideal $I<A$ which is generated by some $f\in A^0$, the subfunctor $\Phi^I_A$ corresponds to $\Phi_{A_f}\to \Phi_A$.
\item More generally, under Hypothesis~\ref{hypo}, formulation~\ref{hypo}(2) implies that the subfunctors $\Phi^I_A$ are the images, computed in $\Fais\cC$ or $\Fais_z\cC$, of the maps
$$\bigsqcup_{i\in I}\Phi_{A_{f_i}}\;\to\;\Phi_A,$$
for families $\{f_i\in A^0\mid i\in I\}$.
\end{enumerate}
\end{remark}

\subsubsection{}\label{DefOC} For $F\in\Fun\cC$, a collection of open subfunctors $G_i\subset F$ is an {\bf open cover} if for each $A\in \Alg\cC$ and element of $F(A)$, the corresponding subfunctors $$G_i\times_F\Phi_A\simeq \Phi_A^{I_i}\;\subset\;\Phi_A$$ satisfy $\sum_i I_i=A$.

Note that we can clearly have $\Phi_A^I=\Phi_A^J$ for ideals $I\not=J$, in particular $\Phi_A^I=\Phi_A^J$ whenever $\sqrt{I}=\sqrt{J}$. However, the condition $\sum_i I_i=A$, from the previous paragraph, is equivalent to the condition $\cap_i V(I_i)=\varnothing$, and thus independent of the choices of $I_i$. Of course, we can also use the reformulation that the open complements of $V(I_i)$ form an open cover of $\Spec A$.

%
%

\begin{lemma}\label{LemSCover}
A collection of open subfunctors $G_i\subset F$ is an open cover if and only if for each simple algebra $S\in\Alg\cC$, the function
$$\bigsqcup_i G_i(S)\;\to\; F(S)$$
is surjective.
\end{lemma}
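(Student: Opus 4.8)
The plan is to translate both conditions into statements about the ideals $I_i$ and then test them against simple algebras. First I would record the basic dictionary: fixing $A\in\Alg\cC$ and an element $\xi\in F(A)$, the Yoneda lemma identifies $\xi$ with a natural transformation $\Phi_A\to F$ sending $\phi\in\Phi_A(B)=\Alg\cC(A,B)$ to $F(\phi)(\xi)$, and under this identification openness of $G_i$ gives $(G_i\times_F\Phi_A)(B)=\Phi_A^{I_i}(B)=\{\phi:A\to B\mid B\phi(I_i)=B\}$. Hence for any $\phi:A\to B$,
$$F(\phi)(\xi)\in G_i(B)\quad\Longleftrightarrow\quad B\phi(I_i)=B.$$
When $B=S$ is simple the only proper ideal is $0$, so $S\phi(I_i)=S$ holds exactly when $\phi(I_i)\neq 0$, i.e. when $I_i\not\subseteq\ker\phi$.

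For the forward direction, suppose $\{G_i\}$ is an open cover and fix a simple $S$ together with $\xi\in F(S)$. Applying the open cover condition of \ref{DefOC} to $A=S$ and this $\xi$ produces ideals $I_i<S$ with $\sum_iI_i=S$. Since $S$ is simple each $I_i$ is $0$ or $S$, and the sum condition forces $I_i=S$ for some index $i$; then $\Phi_S^{I_i}=\Phi_S$, so the identity $\mathrm{id}_S$ lies in $(G_i\times_F\Phi_S)(S)$, which by the dictionary says precisely that $\xi=F(\mathrm{id}_S)(\xi)\in G_i(S)$. Thus $\bigsqcup_iG_i(S)\to F(S)$ is surjective.

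For the converse, assume surjectivity at all simple algebras, fix $A$ and $\xi$, set $J=\sum_iI_i$, and suppose for contradiction that $J\neq A$. By the dictionary a morphism $\phi:A\to S$ into a simple algebra satisfies $F(\phi)(\xi)\notin G_i(S)$ for every $i$ exactly when $I_i\subseteq\ker\phi$ for all $i$, that is, when $\phi$ factors through $A/J$. So it suffices to produce a single algebra morphism from the nonzero algebra $A/J$ to some simple algebra $S$: composing with $A\to A/J$ then gives a $\phi$ with $F(\phi)(\xi)\notin\bigsqcup_iG_i(S)$, contradicting surjectivity. This forces $J=A$, which is exactly the open cover condition.

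The main obstacle is therefore the existence of a simple quotient of a nonzero algebra $R:=A/J$, equivalently of a maximal ideal $\fm<R$, since $R/\fm$ is then simple. I would obtain this by Zorn's lemma applied to the set of proper ideals of $R$ (subobjects form a set in the Grothendieck category $\Ind\cC$), the only point to check being that a directed union of proper ideals remains proper. An ideal $I<R$ equals $R$ if and only if the unit subobject $\unit\hookrightarrow R$ factors through $I$; as $\unit$ has finite length it is a compact object of $\Ind\cC$, so $\Hom(\unit,-)$ commutes with the filtered colimit computing the union $\bigcup_\alpha I_\alpha=\varinjlim_\alpha I_\alpha$, and any factorisation of $\unit\to R$ through the union already factors through some $I_\alpha$. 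Hence the union is proper, a maximal ideal exists, and $R/\fm$ is the required simple algebra, completing the argument.
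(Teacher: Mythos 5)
Your proof is correct and follows essentially the same route as the paper: the forward direction applies the open-cover definition to a simple algebra $S$ (where the only ideals are $0$ and $S$), and the converse passes to the simple quotient $A/\fm$ for a maximal ideal $\fm$ containing all the $I_i$, phrased by you as a contradiction rather than a contrapositive. The only addition is that you spell out, via Zorn's lemma and compactness of $\unit$ in $\Ind\cC$, the existence of the maximal ideal containing the proper ideal $\sum_i I_i$, a fact the paper simply takes for granted.
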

\begin{proof}
Assume that the $G_i$ form an open cover, then applying the definition to an arbitrary $a\in F(S)$ for simple $S$ shows the surjectivity, since the only ideals in $S$ are $0$ and $S$. 

On the other hand, assume that the $G_i$ do not form an open cover. Then there exists $A\in\Alg\cC$ for which the corresponding ideals $I_i<A$ are all contained in a maximal ideal $\fm<A$. Considering the simple algebra $S:=A/\fm$ then shows that also the surjectivity condition in the lemma fails.
\end{proof}

\begin{prop}\label{PropCover} Consider a collection of subfunctors $\{G_i\subset F\mid i\in L\}$ in $\Fun\cC$.
\begin{enumerate}
\item Assume that for every $A\in\Alg\cC$ and every element $\alpha\in F(A)$, there exists a
finite subset $L_0\subset L$ and $\{f_i\in A^0\mid i\in L_0\}$ with $1=\sum_if_i$ so that the image of $\alpha$ under $F(A)\to F(A_{f_i})$ is in $G_i(A_{f_i})\subset F(A_{f_i})$ for every $i\in L_0$. Then $\{G_i\subset F\}$ is an open cover of $F$.
\item Under Hypothesis~\ref{hypo}, the condition in (1) is also a necessary condition for $\{G_i\subset F\}$ to be an open covering.
\end{enumerate}
\end{prop}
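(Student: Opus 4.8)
The engine of both parts is a dictionary between membership in the $G_i$ and inclusions of basic opens, which I would set up first. The $G_i$ are open subfunctors, as the notion of open cover in \ref{DefOC} presupposes, so for fixed $A\in\Alg\cC$ and $\alpha\in F(A)$ --- equivalently a map $\Phi_A\to F$ by Yoneda --- we have $G_i\times_F\Phi_A=\Phi_A^{I_i}$ for an ideal $I_i<A$. For $f\in A^0$ the localisation $A\to A_f$ is a point of $\Phi_A(A_f)$ whose image in $F(A_f)$ is $\alpha|_{A_f}$; hence $\alpha|_{A_f}\in G_i(A_f)$ holds iff $A\to A_f$ lies in $\Phi_A^{I_i}(A_f)$, which by the definition of $\Phi^{I_i}_A$ means $\Spec A_f\subseteq U_i:=\Spec A\setminus V(I_i)$, equivalently $V(I_i)\subseteq V((f))$. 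I would record this equivalence and then use it in each direction.

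For (1), fix $A,\alpha$ and take the finite $L_0$ and $\{f_i\in A^0\mid i\in L_0\}$ with $1=\sum_i f_i$ and $\alpha|_{A_{f_i}}\in G_i(A_{f_i})$ furnished by the hypothesis. The dictionary gives $V(I_i)\subseteq V((f_i))$ for $i\in L_0$, while $1=\sum_{i\in L_0}f_i\in\sum_{i\in L_0}(f_i)$ forces $\bigcap_{i\in L_0}V((f_i))=\varnothing$. Hence $\bigcap_{i\in L_0}V(I_i)=\varnothing$, and a fortiori $\bigcap_{i\in L}V(I_i)=\varnothing$, which is exactly $\sum_{i\in L}I_i=A$; since $A,\alpha$ were arbitrary, $\{G_i\}$ is an open cover. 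No hypothesis on $\cC$ enters here. (Alternatively one reduces to simple algebras via Lemma~\ref{LemSCover}: on a simple $S$ a relation $1=\sum f_i$ forces some $f_i$ to be a unit, whence $S_{f_i}=S$ and $\alpha\in G_i(S)$.)

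For (2) I would run this backwards. Fix $A,\alpha$; the open-cover hypothesis gives $\sum_{i\in L}I_i=A$, and I first extract a finite $L_0$ with $\sum_{i\in L_0}I_i=A$. Applying formulation~\ref{hypo}(2), $V(I_i)=V(AI_i^0)$, so $\bigcap_{i\in L_0}V(AI_i^0)=\varnothing$, i.e. $\sum_{i\in L_0}AI_i^0=A$ as ideals of $A$. The decisive move is to promote this to the relation $\sum_{i\in L_0}I_i^0=A^0$ inside the ordinary commutative ring $A^0$, where $I_i^0=H^0(I_i)$ is an ideal of $A^0$. Granting it, write $1=\sum_{i\in L_0}f_i$ with $f_i\in I_i^0\subseteq I_i$; then $V(I_i)\subseteq V((f_i))$, so $\Spec A_{f_i}\subseteq U_i$, and the dictionary delivers $\alpha|_{A_{f_i}}\in G_i(A_{f_i})$, which is precisely the condition in (1).

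The promotion of $\sum_i AI_i^0=A$ to $\sum_i I_i^0=A^0$ is the step I expect to be the main obstacle, and it is exactly where Hypothesis~\ref{hypo} is indispensable. For a prime $\mathfrak{q}=\mathfrak{p}^0$ lying in the image of $\rho_A$, one has $I_i^0\subseteq\mathfrak{q}$ for all $i$ iff $\mathfrak{p}\in\bigcap_i V(I_i)=\varnothing$; so no prime of $A^0$ descending from a prime of $A$ can contain every $I_i^0$. The force of \ref{hypo} --- that $\rho_A:\Spec A\hookrightarrow\Spec A^0$ identifies $\Spec A$ with a subspace, and in every case where \ref{hypo} is known to hold actually a homeomorphism --- is then what rules out primes of $A^0$ not manifestly coming from $A$, yielding $V_{A^0}\!\big(\sum_i I_i^0\big)=\varnothing$ and hence $\sum_i I_i^0=A^0$. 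Pinning down these extra primes of $A^0$ is the one genuinely nontrivial point; everything else is the bookkeeping of the dictionary above.
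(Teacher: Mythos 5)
Part (1) of your proposal is correct and needs no hypothesis on $\cC$, as you say: the dictionary ($\alpha|_{A_f}\in G_i(A_f)$ iff $\Spec A_f\subseteq \Spec A\setminus V(I_i)$ iff $V(I_i)\subseteq V(Af)$) is valid, and $1=\sum_{i\in L_0}f_i$ then forces $\bigcap_i V(I_i)=\varnothing$, which is the condition in \ref{DefOC}. Your parenthetical alternative is in fact the paper's own, shorter, proof: apply the assumption to simple $A$, where $A^0$ is a field, so some $f_i$ is a unit, and invoke Lemma~\ref{LemSCover}.

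The gap is in part (2), precisely at what you call the decisive move. Your observation that no prime in the image of $\rho_A$ can contain every $I_i^0$ is a correct use of \ref{hypo}(2), but your treatment of the remaining primes of $A^0$ appeals to $\rho_A$ being a homeomorphism (in particular surjective). That is \emph{not} part of Hypothesis~\ref{hypo}: the hypothesis only makes $\rho_A$ the inclusion of a subspace, and the paper's remark following \ref{hypo} explicitly flags the homeomorphism property as a strictly stronger condition, verified in the known examples but not assumed. So as written your argument proves part (2) under stronger hypotheses than stated, and you concede yourself that the ``extra'' primes are not pinned down. What is actually needed is the several-element analogue of Lemma~\ref{LemfInv}(1): for an ideal $J$ of the ordinary ring $A^0$, if $AJ=A$ then $J=A^0$ (equivalently, every maximal ideal $\mathfrak{n}<A^0$ satisfies $A\mathfrak{n}\neq A$, hence lies in the image of $\rho_A$). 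Note that Lemma~\ref{LemfInv}(1) is exactly the principal case ($Af=A$ implies $f$ invertible in $A^0$) and holds with no hypothesis at all; note also that the statement is not formal, since $\unit$ need not be projective, so one cannot lift $1\in AJ$ along $\bigoplus_m A\to AJ$ to a relation $1=\sum_m a_m j_m$ with $a_m\in A^0$. The paper's proof packages this step into the one-line deduction ``$\bigcap_i V(I_i)=\varnothing$, hence by \ref{hypo}(2) $\sum_i I_i^0=A^0$'', resting on \ref{hypo}(2) and the results of \cite{ComAlg} rather than on any surjectivity of $\rho_A$; your proof should prove or cite that fact instead of importing the homeomorphism assumption. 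Everything else in your part (2) --- the finite extraction, the choice $f_i\in I_i^0\subseteq I_i$, and the dictionary at the end --- matches the paper's argument.
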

\begin{proof}
For part (1), we can apply the assumption to the case where $A$ is simple, and hence $A^0$ is a field. The assumption then simply states that the equivalent criterion from Lemma~\ref{LemSCover} is satisfied.

For part (2), consider an open cover $\{G_i\subset F\mid i\in L\}$ and $\alpha\in F(A)$. By definition this leads to a collection of ideals $I_i<A$ with $\cap_i V(I_i)=\varnothing$. By \ref{hypo}(2), it follows that $\sum_i I_i^0=A^0$. Hence there exists a finite subset $L_0\subset L$ and $f_i\in I^0_i$ for $i\in L_0$ with $1=\sum_if_i$. Since $\Phi_{A_{f_i}}\to \Phi_A\to F$ factors via $\Phi_A^{I_i}\to G_i\to F$, the conclusion follows.
\end{proof}

\subsection{Equivariant geometry}

Let $G$ be an affine group scheme over $k$ and set $\cC=\Rep G$. 

\subsubsection{Notation}We denote the forgetful functor by
$$\omega:\; \Rep G\;\to\; \Vecc.$$
We fix the notation $\mathrm{O}(G)\in\Alg\cC$ for the algebra $\cO(G)$ with canonical left coaction, in particular $\omega(\mathrm{O}(G))=\cO(G)$.

We denote by $\Sch^G_k$ the category of pairs $(\bX,a)$ of $k$-schemes $\bX$ equipped with a $G$-action
$$a:G\times\bX\to\bX,$$
which we mostly suppress from notation.

We demonstrate that $\Fais\cC$ contains $\Sch_k^G$ fully faithfully.

\begin{theorem}\label{ThmEquiv}
The functor
$$\Sch^G_k\;\to\; \Fun\cC,\quad \bX\mapsto H_{\bX}:=\Hom_G(\underline{\Spec}-,\bX)$$
takes values in~$\Fais\cC$ and is fully faithful. Here $\underline{\Spec}A$ stands for the affine $k$-scheme corresponding to $\omega(A)\in \Alg_k$, for $A\in \Alg\cC$, equipped with canonical $G$-action.
\end{theorem}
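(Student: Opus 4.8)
The plan is to reduce the whole statement to classical faithfully flat descent for $k$-schemes together with the free--forgetful adjunction between $G$-schemes and schemes. The key observation is that, since $\omega:\Rep G\to\Vecc$ is an exact faithful tensor functor, it carries $\Alg\cC$ to $\Alg_k$, preserves finite products, and sends a faithfully flat $A\to B$ to a faithfully flat $\omega A\to\omega B$ with $\omega(B\otimes_AB)=\omega B\otimes_{\omega A}\omega B$; applying $\Spec$, an fpqc cover in $\Alg\cC$ becomes an fpqc cover $\underline{\Spec}B\to\underline{\Spec}A$ of $G$-schemes with $\underline{\Spec}(B\otimes_AB)=\underline{\Spec}B\times_{\underline{\Spec}A}\underline{\Spec}B$. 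To prove $H_\bX\in\Fais\cC$, commutation with finite products is immediate from $\underline{\Spec}(A_1\times A_2)=\underline{\Spec}A_1\sqcup\underline{\Spec}A_2$. For the equalizer along a faithfully flat $A\to B$, classical fpqc descent (the functor of points of a scheme is an fpqc sheaf) makes $\Hom_k(\underline{\Spec}A,\bX)\to\Hom_k(\underline{\Spec}B,\bX)\rightrightarrows\Hom_k(\underline{\Spec}(B\otimes_AB),\bX)$ an equalizer, which already gives injectivity of $H_\bX(A)\to H_\bX(B)$ on the equivariant subsets. Given an equivariant $\phi\in H_\bX(B)$ that equalizes the two maps, descent produces a unique $k$-morphism $\bar\phi$ with $\phi=\bar\phi\circ\pi$ ($\pi:\underline{\Spec}B\to\underline{\Spec}A$), and I would check equivariance of $\bar\phi$ by pulling the identity $a_\bX\circ(\mathrm{id}_G\times\bar\phi)=\bar\phi\circ a_{\underline{\Spec}A}$ back along the faithfully flat, hence epimorphic, map $\mathrm{id}_G\times\pi$, where both composites collapse (using equivariance of $\pi$ and $\phi$) to the equivariance identity for $\phi$.

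For full faithfulness I would exploit the adjunction $\Hom_G(G\times\Spec R,\bX)\cong\Hom_k(\Spec R,\bX)=h_\bX(R)$, where for $R\in\Alg_k$ with trivial action $\underline{\Spec}(\mathrm O(G)\otimes R)=G\times\Spec R$ is the free $G$-scheme. Thus restricting $H_\bX$ along $R\mapsto\mathrm O(G)\otimes R$ recovers the ordinary functor of points $h_\bX$, and--this is the crucial point--the enlarged functoriality on free algebras, where $\Hom_{\Alg\cC}(\mathrm O(G)\otimes R,\mathrm O(G)\otimes R')\cong G(R')\times\Hom_{\Alg_k}(R,R')$, encodes exactly the natural $G$-action on $h_\bX$. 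Consequently $H_f$ restricts on free algebras to $h_f$, so injectivity of $\bX\mapsto H_\bX$ on morphisms follows from full faithfulness of the classical functor of points $\Sch_k\hookrightarrow\Fun_k$. Given $\eta:H_\bX\to H_\bY$, its restriction $\bar\eta:h_\bX\to h_\bY$ is then a natural and $G$-equivariant transformation, hence $\bar\eta=h_f$ for a unique $G$-equivariant morphism $f:\bX\to\bY$.

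It then remains to upgrade the equality $H_f=\eta$ from free algebras to all of $\Alg\cC$. For this I would use the coaction $\rho:A\to\mathrm O(G)\otimes\omega A$, which is a morphism in $\Alg\cC$ realizing the action map $G\times\underline{\Spec}A\to\underline{\Spec}A$; since $G\to\Spec k$ is faithfully flat so is $\rho$, and its self-fibre-product $\mathrm O(G)\otimes\omega A\otimes_A\mathrm O(G)\otimes\omega A\cong\mathrm O(G)\otimes\mathrm O(G)\otimes\omega A$ is again free. As $H_\bY$ is a faisceau, $H_\bY(A)$ is the equalizer of the two maps on these free algebras; since $\eta$ and $H_f$ agree on every free algebra and are natural via $\rho$, their values at $A$ have equal image in $H_\bY(\mathrm O(G)\otimes\omega A)$ and therefore coincide. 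I expect the main obstacle to be precisely this passage from free $G$-algebras to arbitrary ones: this is where the absence of $G$-stable affine covers of $\bX$ (noted in the introduction) rules out any naive gluing, forcing one to combine the faisceau property of $H_\bX$ with the fact that the coaction exhibits each affine $G$-scheme as a faithfully flat quotient of a free one; a secondary point of care is extracting equivariance of descended morphisms purely from the epimorphism property of faithfully flat covers.
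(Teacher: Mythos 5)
Your proposal is correct and follows essentially the same route as the paper's proof: the sheaf property is reduced to classical fpqc descent using that $\omega$ preserves faithful flatness and relative tensor products (the paper's Lemma~\ref{TanZar}), and full faithfulness is obtained from the isomorphism $H_{\bX}(\mathrm{O}(G)\otimes R)\simeq\bX(R)$ on free algebras together with injectivity of $H(\rho)$ for the coaction $\rho:A\to\mathrm{O}(G)\otimes\omega(A)$, with equivariance of the resulting morphism extracted from naturality on free algebras — exactly the paper's argument in a slightly different order. The one caveat is that your ``key observation'' is not a formal consequence of $\omega$ being exact, faithful and monoidal (ordinary flatness of $\omega B$ over $\omega A$ must be tested on non-equivariant modules); its proof, as in Lemma~\ref{TanZar}, requires the untwisting isomorphism $\mathrm{O}(G)\otimes A\simeq\mathrm{O}(G)\otimes\omega(A)$ and faithful flatness of $\unit\to\mathrm{O}(G)$.
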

\begin{proof}
Fix $(\bX,a)\in \Sch_k^G$. We will also write $\bX$ for the associated functor of points $\Alg_k\to\Set$ (in particular forgetting the $G$-action). For each $A\in \Alg\cC$, the diagram
\begin{equation}\label{eqr1}H_{\bX}(A)\to \bX(\omega A)
\rightrightarrows\bX(\omega A\otimes \cO(G))\end{equation}
is an equaliser. Here, one of the parallel arrows is $\bX(-)$ of the coaction of $A$ and the other is induced from $a$ (it sends $\phi:\bSpec\omega A\to\bX $ to $a\circ(G\times \phi)$).
It then follows from Lemma~\ref{TanZar}(1) and (2) below, the classical fact that $\bX(-)\in \Fais_k$ and standard diagram chasing that $H_{\bX}\in \Fais\cC$.

Now, for $\bX,\bY\in \Sch_k^G$, we consider morphisms
\begin{equation}\label{HGH}
\Hom_G(\bX,\bY)\;\to\; \Nat(H_{\bX}, H_{\bY})\;\to\; \Hom(\bX,\bY).
\end{equation}
The first morphisms is the obvious one (which we need to show is an isomorphism) and the second evaluates a natural transformation $H_{\bX}\Rightarrow H_{\bY}$ on the algebras of the form $\mathrm{O}(G)\otimes R$ for $R\in \Alg_k$, using the natural isomorphism
\begin{equation}\label{73}
H_{\bX}(\mathrm{O}(G)\otimes R)\;\simeq\; \bX(R).
\end{equation}

It then follows immediately that the composite of the two morphisms in \eqref{HGH} is the ordinary inclusion. Next we observe that the second morphism in \eqref{HGH} is injective. Indeed, we can write the injection $H_{\bX}(A)\hookrightarrow \bX(\omega A)$ from \eqref{eqr1} as the composite of isomorphism~\eqref{73}, for $R=\omega A$, and $H_{\bX}(\rho)$, for the coaction
$$\rho: A\to \mathrm{O}(G)\otimes \omega(A).$$ In particular $H_{\bX}(\rho)$ is injective. So if two natural transformations agree on all $\mathrm{O}(G)\otimes R$, $R\in\Alg_k$, they must be equal.

To conclude the proof it now suffices to show that the second morphism in \eqref{HGH} actually takes values in~$G$-equivariant morphisms. For this we can observe that for any $R\in \Alg_k$ and $g\in G(R)$, we have a commutative square
$$\xymatrix{
H_{\bX}(\mathrm{O}(G)\otimes R)\ar[rr]^{\sim}\ar[d]&& \bX(R)\ar[d]\\
H_{\bX}(\mathrm{O}(G)\otimes R)\ar[rr]^{\sim}&& \bX(R),
}$$
where the horizontal maps are \eqref{73}, the right vertical map corresponds to the action of $g$ and the left horizontal arrow represents the action of $H_{\bX}$ on the morphism in~$\Alg\cC$
$$\mathrm{O}(G)\otimes R\to \mathrm{O}(G)\otimes \cO(G)\otimes R\to \mathrm{O}(G)\otimes R\otimes R\to \mathrm{O}(G)\otimes R,$$
coming from comultiplication, $g$ and multiplication. We can thus conclude that the $G$-equivariance of $\bX\to \bY$ follows from naturality of $H_{\bX}\to H_{\bY}$. 
\end{proof}

We show how to interpret the scheme theory associated to $\Rep G$ in~\cite{TV} in terms of ordinary equivariant geometry.

\begin{theorem}\label{ThmTV}
The category of `schemes' in $\Fais_z\cC\subset\Fun\cC$, according to the definition in \cite[2.15]{TV}, is contained in the full subcategory $\Sch_k^G$ from Theorem~\ref{ThmEquiv}, and comprises those $k$-schemes with $G$-action that admit a $G$-equivariant open affine cover.
\end{theorem}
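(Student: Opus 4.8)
The plan is to match the affine objects on both sides and then glue. First I would record that, for $A\in\Alg\cC$, a direct computation gives $H_{\underline{\Spec}A}=\Phi_A$: a $G$-equivariant morphism $\underline{\Spec}B\to\underline{\Spec}A$ is exactly an algebra morphism $A\to B$ in $\Alg\cC$, so $H_{\underline{\Spec}A}(B)=\Alg\cC(A,B)=\Phi_A(B)$. Thus the representable faisceaux are precisely the images of the affine $k$-schemes with $G$-action. The second ingredient is the adjunction $\omega\dashv u$, where $u=\mathrm{O}(G)\otimes(-):\Alg_k\to\Alg\cC$ sends $R$ to the cofree comodule algebra and $\Alg\cC(A,\mathrm{O}(G)\otimes R)\cong\Alg_k(\omega A,R)$, with unit at $A$ the coaction $\rho_A$. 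Precomposition gives $u^{\ast}:\Fun\cC\to\Fun_k$, and I would check the two formulas that drive everything: $u^{\ast}\Phi_A=\bSpec\,\omega A$ and, using the counit identity to compute the ideal generated by $\rho(I)$, that $u^{\ast}\Phi_A^I$ is the standard open $(\bSpec\,\omega A)\setminus V((\omega A)\omega I)$. Since $u$ preserves faithfully flat maps and the pushouts $u(B)\otimes_{u(A)}u(B')=u(B\otimes_A B')$, the functor $u^{\ast}$ carries (Zariski and fpqc) faisceaux to faisceaux.

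For the inclusion of $G$-equivariant schemes with an equivariant affine cover into the $\cite{TV}$-schemes, let $\bX\in\Sch_k^G$ have a $G$-equivariant open affine cover $\{\bU_i\cong\underline{\Spec}A_i\}$. By Theorem~\ref{ThmEquiv}, $H_\bX\in\Fais\cC\subset\Fais_z\cC$, and $H_{\bU_i}=\Phi_{A_i}$ by the previous paragraph. It remains to see $\{H_{\bU_i}\}$ is an open cover of $H_\bX$ in the sense of~\ref{DefOC}. Here I would use the dictionary that $G$-stable opens of $\underline{\Spec}A$ correspond to ideals $I<A$ in $\cC$ (that is, $G$-stable ideals of $\omega A$), so that $H_\bU\subset\Phi_A$ is the open subfunctor $\Phi_A^I$. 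Given $\alpha\in H_\bX(A)$, pulling the cover back along $\alpha$ produces the $G$-stable opens $\alpha^{-1}\bU_i$, i.e.\ ideals $I_i<A$; since these opens cover $\bSpec\,\omega A$ we get $\sum_i\omega I_i=\omega A$, hence $\sum_i I_i=A$ because $\omega$ is exact and faithful. This is exactly the covering condition~\ref{DefOC}, so $H_\bX$ is a $\cite{TV}$-scheme. I emphasise that this avoids Hypothesis~\ref{hypo}, which fails for general $\Rep G$ (e.g.\ $G=\mathbb{G}_m$), so I cannot invoke Proposition~\ref{PropCover}(2), Remark~\ref{RemOpen}(2), Proposition~\ref{PropTV} or Lemma~\ref{LemTech}.

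For the reverse inclusion, let $F\in\Fais_z\cC$ be a scheme with open affine cover $\{G_i\cong\Phi_{A_i}\subset F\}$, and set $\bX:=u^{\ast}F$, so $\bX(R)=F(\mathrm{O}(G)\otimes R)$. I would first show $\bX$ is a $k$-scheme with open affine cover $\{u^{\ast}G_i=\bSpec\,\omega A_i\}$. The crucial computation is that the point $\tilde y:\bSpec R\to u^{\ast}F$ attached to $y\in F(\mathrm{O}(G)\otimes R)$ factors as $\bSpec R\to\bSpec(\cO(G)\otimes R)=u^{\ast}\Phi_{\mathrm{O}(G)\otimes R}\to u^{\ast}F$ through the unit section $\epsilon\otimes R$; indeed the adjoint of this section is $\mathrm{id}_{\mathrm{O}(G)\otimes R}$ by the counit axiom. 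Consequently $u^{\ast}G_i\times_{u^{\ast}F}\bSpec R$ is the pullback along this section of the standard open $u^{\ast}\Phi_{\mathrm{O}(G)\otimes R}^{J_i}$, hence open, and the covering condition $\sum_iJ_i=\mathrm{O}(G)\otimes R$ from~\ref{DefOC} yields $\sum_i\omega J_i=\cO(G)\otimes R$ after applying $\omega$, so the $u^{\ast}G_i$ cover $\bX$. The $G$-action on $\bX$ is defined exactly as in the last part of the proof of Theorem~\ref{ThmEquiv}, via the endomorphisms $t_g$ of $\mathrm{O}(G)\otimes R$; each $\bU_i:=u^{\ast}G_i$ is $G$-stable because $G_i$ is a subfunctor, so $\{\bU_i\}$ is a $G$-equivariant open affine cover and $\bX\in\Sch_k^G$.

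Finally I would identify $F$ with $H_\bX$. Each $\bU_i=u^{\ast}\Phi_{A_i}=\underline{\Spec}A_i$, so $H_{\bU_i}=\Phi_{A_i}=G_i$ canonically, and the intersections match, $H_{\bU_i\cap\bU_j}=u^{\ast}(G_i\times_F G_j)$ as open subfunctors of $\Phi_{A_i}$, since $u^{\ast}$ preserves fibre products. As the members of an open cover are jointly epimorphic and both $F$ and $H_\bX$ are faisceaux, each is the colimit in $\Fais_z\cC$ of the same \v{C}ech diagram $\coprod_{i,j}(G_i\times_F G_j)\rightrightarrows\coprod_i G_i$, whence $F\cong H_\bX$; this places $F$ in the image of Theorem~\ref{ThmEquiv} and exhibits it as a $G$-equivariant scheme with an equivariant affine cover. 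I expect the main obstacle to be the third paragraph: proving that $\bX=u^{\ast}F$ is an honest $k$-scheme rather than a bare functor, for which the factorisation through the unit section, reducing openness and covering over $\Fun_k$ to the data defining $F$ over $\Fun\cC$, is the non-formal heart of the argument. A secondary point requiring care throughout is that, because Hypothesis~\ref{hypo} is unavailable for general $\Rep G$, all openness and covering claims must be run directly from Definitions~\ref{OpenFun} and~\ref{DefOC} together with exactness of $\omega$, rather than from the simplified criteria available under that hypothesis.
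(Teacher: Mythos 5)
Your overall strategy is the same as the paper's: everything is routed through the functor $F\mapsto F(\mathrm{O}(G)\otimes -)$ (your $u^{\ast}$ is exactly the functor \eqref{ForG}), representables are matched via $\Phi_A\mapsto\Phi_{\omega A}$, and one tries to transport opens and covers across this functor. The genuine gap is that your argument never engages with the definition it is supposed to verify. A `scheme' in \cite[2.15]{TV} is defined through the Zariski pretopology built from the morphisms of \ref{Zar} (flat, epimorphism, finitely presented): the affine Zariski opens of $\Phi_A$ are the $\Phi_B\to\Phi_A$ with $A\to B$ of this form, a general Zariski open is an image in $\Fais_z\cC$ of a family of such, and the covering condition is that $\bigsqcup_i\Phi_{A_i}\to F$ be an epimorphism of Zariski faisceaux. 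In both directions you instead verify (or, in the converse direction, assume) the paper-internal conditions of \ref{OpenFun} and \ref{DefOC} --- subfunctors $\Phi_A^I$ and the condition $\sum_iI_i=A$ --- and then declare the result to be, or to come from, a scheme in the sense of \cite{TV}. The equivalence of these two packages of conditions is precisely the content of Remark~\ref{RemTV2}, which holds \emph{under Hypothesis~\ref{hypo}}; as you yourself stress, that hypothesis fails for general $\Rep G$, and having renounced Proposition~\ref{PropTV}, Remark~\ref{RemOpen}(2), Proposition~\ref{PropCover}(2) and Lemma~\ref{LemTech}, you cannot then use the identification they underwrite. Without it, ``open cover in the sense of \ref{DefOC} by representables'' and ``scheme in the sense of \cite[2.15]{TV}'' are simply different statements.

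And for $\Rep G$ they genuinely differ, so the missing step is not a formality. By Lemma~\ref{TanZar}(5) --- the key lemma of the paper's proof, which your argument never invokes --- the morphisms of \ref{Zar} in $\Alg\Rep G$ are exactly the $G$-equivariant open immersions of affine schemes; hence a TV-Zariski open of $\Phi_A$ corresponds to a $G$-stable open of $\Spec\omega A$ that is a \emph{union of $G$-stable affine opens}, whereas $\Phi_A^I$ corresponds to an arbitrary $G$-stable open. Take $G=\mathrm{SL}_2$, $A=\Sym(V^\vee)$ with $V$ the natural two-dimensional representation, and $I=AV^\vee$: then $\Phi_A^I$ (maps into $\mA^2$ avoiding the origin) is open in the sense of \ref{OpenFun}, but it is not a TV-Zariski open, because $\mA^2\setminus\{0\}$ contains no nonempty $\mathrm{SL}_2$-stable affine open. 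Subfunctors of exactly this type arise unavoidably in your argument, as the pullbacks $H_{\bU_i}\times_{H_{\bX}}\Phi_{A_j}=H_{\bU_i\cap\bU_j}$, and nothing in your hypotheses makes them TV-opens; likewise, the epimorphy of $\bigsqcup_i\Phi_{A_i}\to H_{\bX}$ in $\Fais_z\cC$ needed for your final \v{C}ech identification means that every $G$-map $\underline{\Spec}B\to\bX$ must factor through some $\bU_i$ after refinement by a cover of the form \ref{Zar}, which is much stronger than $\sum_iI_i=B$. The paper's proof concentrates exactly on this translation, reading TV's opens and covers geometrically via Lemma~\ref{TanZar}(5); your proposal replaces that translation by an equivalence that is false without Hypothesis~\ref{hypo}, so the assertion about \cite[2.15]{TV} is never actually proved.
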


The technical observations underlying the theorem are summarised in the following lemma.

\begin{lemma}\label{TanZar}
Consider $f:A\to B$ in~$\Alg\cC$.
\begin{enumerate}
\item $f$ is flat if and only if $\omega(f)$ is flat.
\item $f$ is faithful if and only if $\omega(f)$ is faithful.
\item $f$ is an epimorphism if and only if $\omega(f)$ is an epimorphism.
\item $f$ is finitely presented if and only if $\omega(f)$ if finitely presented.
\item $f$ satisfies the conditions in \ref{Zar} if and only if the morphism of affine $k$-schemes corresponding to $\omega(f)$ is an open immersion.
\end{enumerate}
\end{lemma}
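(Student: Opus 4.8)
The backbone of the argument is that the forgetful functor $\omega:\Rep G\to\Vecc$ is $k$-linear, exact, faithful, cocontinuous and symmetric monoidal. First I would record the consequences at the level of module categories: for $A\in\Alg\cC$ the induced functor $\omega_A:\Mod_{\cC}A\to\Mod_{\omega(A)}$ inherits the same properties, and because $M\otimes_A N$ is a coequaliser of a diagram built from $\otimes$, cocontinuity and monoidality of $\omega$ yield a natural isomorphism $\omega(M\otimes_A N)\cong\omega(M)\otimes_{\omega(A)}\omega(N)$, compatible with $\omega(f)$. Since a faithful exact additive functor reflects monomorphisms, epimorphisms, isomorphisms and the zero object, the ``if'' directions of (1) and (2) are then immediate: if $\omega(f)$ is flat and $M\hookrightarrow M'$ is a monomorphism in $\Mod_{\cC}A$, applying $\omega$ to $B\otimes_A M\to B\otimes_A M'$ produces the monomorphism $\omega(B)\otimes_{\omega(A)}\omega(M)\to\omega(B)\otimes_{\omega(A)}\omega(M')$, which $\omega$ reflects; the faithful case is dealt with identically, using that $\omega$ reflects the zero object.

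Part (3) I would treat separately and symmetrically, using that a morphism of commutative algebras is an epimorphism precisely when the multiplication $B\otimes_A B\to B$ is an isomorphism. Applying $\omega$ together with $\omega(B\otimes_A B)\cong\omega(B)\otimes_{\omega(A)}\omega(B)$, and using that $\omega$ both preserves and reflects isomorphisms, disposes of both directions at once. Part (4) is a presentation argument: a finite presentation $B\cong\Sym_A(A\otimes V)/(W)$ with $V,W\in\cC$ of finite length maps under $\omega$ to an honest finite presentation of $\omega(B)$ over $\omega(A)$, giving ``only if''; conversely, finitely many generators and relations for $\omega(B)$ are finitely many vectors, hence lie in finite-length subobjects $V,W\subset B$ of $\cC$, and since $\omega$ reflects surjections the resulting map $\Sym_A(A\otimes V)/(W)\to B$ in $\Alg\cC$ is an isomorphism.

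The essential point is the ``only if'' directions of (1) and (2): that flatness (resp.\ faithfulness) in $\cC$ forces flatness (resp.\ faithfulness) of $\omega(f)$, even though an arbitrary $\omega(A)$-module need not carry a $G$-action. Here I would bring in the $G$-action geometrically. The coaction $\rho:\omega(A)\to\cO(G)\otimes\omega(A)$ is an algebra map whose target, regarded as an $\omega(A)$-module via $\rho$, is free by the standard shearing isomorphism trivialising the coaction; hence $\rho$ is faithfully flat. An $A$-module in $\cC$ is exactly an $\omega(A)$-module equipped with descent datum for the action groupoid $\cO(G)\otimes\omega(A)\rightrightarrows\omega(A)$, and under this identification $f$ being flat (resp.\ faithfully flat) in $\cC$ means precisely that the corresponding object over the quotient $[\Spec\omega(A)/G]$ is flat (resp.\ faithfully flat). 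Pulling back along the faithfully flat cover $\Spec\omega(A)\to[\Spec\omega(A)/G]$ returns $\omega(B)$ as an $\omega(A)$-module, so the forward implication follows because flatness and faithful flatness are preserved by base change, while the converse is descent of (faithful) flatness along the same faithfully flat cover.

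Finally, part (5) I would deduce by assembling (1), (3) and (4): by those parts $f$ satisfies the conditions of \ref{Zar} exactly when $\omega(f)$ is a flat, finitely presented epimorphism of $k$-algebras, and it is classical that a morphism of affine schemes is an open immersion if and only if the corresponding ring map is flat, finitely presented and an epimorphism (equivalently, a flat monomorphism of finite presentation). The main obstacle is the previous paragraph: making precise the faithful flatness of the coaction $\rho$ and the identification of $\cC$-flatness with flatness over the action groupoid, which is where all the genuine content lies, and the only place the Hopf structure of $\cO(G)$ is used beyond the bare exactness and monoidality of $\omega$.
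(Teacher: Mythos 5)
Your proposal is correct, and for the one genuinely hard point it rests on the same mechanism as the paper, only in different clothing. For the ``only if'' directions of (1) and (2) the paper uses the natural isomorphism $\mathrm{O}(G)\otimes A\;\simeq\;\mathrm{O}(G)\otimes \omega(A)$ in $\Alg\cC$ together with faithful flatness of $\unit\to\mathrm{O}(G)$: tensoring $f$ up to $\mathrm{O}(G)$ identifies it with $\mathrm{O}(G)\otimes\omega(f)$, and one then descends along $\unit\to\mathrm{O}(G)$. Your quotient-stack formulation is this argument repackaged: your ``shearing isomorphism trivialising the coaction'' is literally the paper's isomorphism, and your appeal to ``flatness is preserved by base change along $\Spec\omega(A)\to[\Spec\omega(A)/G]$'' is not a formal fact once flatness over the quotient is taken in its forced categorical meaning (exactness of $-\otimes_AB$ on $\Mod_{\cC}A=\QCoh$ of the quotient, which is what ``$f$ flat in $\cC$'' says): it requires the projection formula $\pi_\ast(\pi^\ast\mathcal{B}\otimes N)\simeq\mathcal{B}\otimes_A\pi_\ast N$ for the affine, faithfully flat atlas map $\pi$, plus exactness and faithfulness of $\pi_\ast$, and proving that projection formula \emph{is} the paper's shearing computation. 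So the stack language defers rather than removes the work; to your credit you flag exactly this step as the locus of the content. Where you genuinely diverge from, and arguably improve on, the paper is part (3): characterising epimorphisms in $\Alg\cC$ by the codiagonal $B\otimes_AB\to B$ being an isomorphism disposes of both directions simultaneously, since $\omega$ preserves $\otimes_A$ and preserves and reflects isomorphisms, whereas the paper reruns the shearing trick for the nontrivial direction. Your part (4) supplies an honest argument (pushing finite presentations through $\omega$, using that finitely many elements of $\omega(B)$ lie in $\omega(V)$ for a finite-length subobject $V\subset B$ and that $\omega$ reflects epimorphisms and zero objects) where the paper simply declares it clear; and part (5) is the same in both treatments, reducing via (1), (3), (4) to the classical fact that flat, finitely presented epimorphisms of $k$-algebras correspond exactly to open immersions of affine schemes.
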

\begin{proof}
One direction of parts (1), (2) and (3) is obvious. To prove the other direction, we can use the algebra isomorphisms, natural in $A\in\Alg\cC$,
$$\mathrm{O}(G)\otimes A\;\xrightarrow{\sim} \mathrm{O}(G)\otimes \omega(A).$$
 Indeed, since $\unit \to \mathrm{O}(G)$ is obviously faithfully flat, parts (1) and (2) follow. Similarly, for (3), if $f$ is an epimorphism, it follows that $\mathrm{O}(G)\otimes \omega{f}$ is an epimorphism, and using faithful flatness of $\unit \to \mathrm{O}(G)$ allows us to conclude that if two morphisms $\omega B\rightrightarrows R$ in $\Alg_k$ are equalised by $\omega (f)$, they must be equal.

Part (4) is clear. For part (5), by (1)-(3), the conditions in \ref{Zar} are satisfied for $f$ if and only if they are satisfied for $\omega(f)$. It is well-known (and a self-contained proof of one direction is actually given in the proof of Proposition~\ref{PropTV} above applied to $\cC=\Vecc$) that these conditions characterise open immersions.
\end{proof}

\begin{proof}[Proof of Theorem~\ref{ThmTV}]
We sketch the proof:
We consider the functor
\begin{equation}\label{ForG}\Fun\cC\to\Fun_k\;:\; F\mapsto F(\mathrm{O}(G)\otimes-),\end{equation}
which satisfies $\Phi_A\mapsto \Phi_{\omega A}$.

By Remark~\ref{RemOpen}(2) and Lemma~\ref{TanZar}(5), the notion of `Zariski open' of $\Phi_A$ in \cite[D\'efinition~2.12]{TV} corresponds precisely to that of open subfunctors $\Phi_A^I\subset\Phi_A$ from \ref{OpenFun}. In particular, under~\eqref{ForG}, these Zariski opens are just sent to open subfunctors in the usual sense (corresponding to $G$-stable ideals of $\omega A$).

It then follows quickly that a scheme in the sense of \cite[D\'efinition~2.15]{TV} is sent by \eqref{ForG} to a an ordinary scheme, but by construction the affine cover can be taken $G$-equivariantly.

Conversely, starting from a scheme with $G$-action and a $G$-equivariant affine cover, we can (by the above arguments) construct the required morphisms in \cite[D\'efinition~2.15]{TV}.
\end{proof}

\begin{remark}
Theorem~\ref{ThmTV} demonstrates that the theory of schemes from \cite{TV} does not encompass all $G$-equivariant schemes. Consider for instance~$G/H$ for a subgroup $H<G$ of an algebraic group for which $k$-scheme $G/H$ is not affine.

An exception to this observation is the case of infinitesimal $G$. In this case, Theorem~\ref{ThmTV} actually implies that the theory of schemes from \cite{TV} coincides with the theory $G$-equivariant schemes. This also follows from Example~\ref{ExInf} and Remark~\ref{RemTV2} below.

\end{remark}


\section{Geometric realisations}\label{SecGeoRel}

\subsection{Definitions} 
\subsubsection{}
For $A\in\Alg\cC$, a {\bf geometric realisation} is some $\bX=(X,\cO)\in \LRS\cC$ with an algebra morphism $A\to \Gamma(X,\cO)$ that induces an isomorphism
\begin{equation}\label{eqRea}
\LRS\cC(\bY,\bX)\;\xrightarrow{\sim}\; \Alg\cC(A,\Gamma(Y,\cO_Y)),
\end{equation}
for every $\bY=(Y,\cO_Y)\in\LRS\cC$.
Thus, an algebra $A$ has a (necessarily unique up to isomorphism) geometric realisation if and only if 
$$\Alg\cC(A,\Gamma-)\;:\;(\LRS\cC)^{\op}\to \Set$$
is representable.

We will show that geometric realisations need not exist. However, when the geometric realisation $(X,\cO)$ of $A$ exists, then
there is a canonical function 
\begin{equation}\label{XA}
t_A:X\;\to\;\Spec A,
\end{equation} 
sending $x\in X$ to the preimage of the unique maximal ideal under $A\to \Gamma(X,\cO)\to\cO_x$. 


We refer to \cite{Di}, or references therein, for the definition of multi-representability.
\begin{lemma}\label{LemMR}
Assume that $(X,\cO)$ is a geometric realisation of $A\in\Alg\cC$.
\begin{enumerate}
\item With $U:\LocAlg\cC\to\Alg\cC$ the forgetful (inclusion) functor, the functor
$$\Alg\cC(A,U-):\LocAlg\cC\to \Set$$
is multi-representable, with labelling set $X$. In particular, $X$ is the geometric spectrum from \cite[\S 4.5]{ComAlg}.
\item The function $t_A$ in~\eqref{XA} has as image the set of visible prime ideals (see \cite[\S 4.2]{ComAlg}), and, under the identification in part (1), $t_A$ corresponds to \cite[equation~(4.4)]{ComAlg}.
\end{enumerate}
\end{lemma}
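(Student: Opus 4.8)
The plan is to test the defining universal property \eqref{eqRea} of the geometric realisation against the one-point locally ringed spaces. For a local algebra $B\in\LocAlg\cC$, write $\mathsf{pt}(B)=(\{\ast\},B)\in\LRS\cC$ for the one-point space whose structure sheaf takes the single value $B$; since its only stalk is $B$, it is genuinely a locally ringed space, and $\Gamma(\mathsf{pt}(B))=B$. First I would run the classical computation of morphisms out of a one-point space: a continuous map $\{\ast\}\to X$ selects a point $x\in X$, and by the adjunction $f^{-1}\dashv f_\ast$ a sheaf-of-algebras map $\cO\to f_\ast B$ is the same as an algebra map $\cO_x\to B$, which is local exactly when the associated morphism of locally ringed spaces is local (\ref{DefsLRS}). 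This yields a bijection, natural in $B$,
$$\LRS\cC(\mathsf{pt}(B),\bX)\;\cong\;\coprod_{x\in X}\LocAlg\cC(\cO_x,B).$$

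Next I would feed $\bY=\mathsf{pt}(B)$ into \eqref{eqRea} and use $\Gamma(\mathsf{pt}(B))=B$ to obtain $\LRS\cC(\mathsf{pt}(B),\bX)\cong\Alg\cC(A,B)$, natural in $B$; composing with the previous bijection gives a natural isomorphism
$$\Alg\cC(A,UB)\;\cong\;\coprod_{x\in X}\LocAlg\cC(\cO_x,B).$$
This is precisely the statement that $\Alg\cC(A,U-)$ is multi-representable with labelling set $X$ and representing objects the stalks $\cO_x$, proving part (1). To identify $X$ with the geometric spectrum of \cite[\S 4.5]{ComAlg}, I would invoke uniqueness of multi-representations: a representable functor is indecomposable (its identity element is an indivisible element of $\LocAlg\cC(\cO_x,\cO_x)$), so the labelling set of any decomposition into representables is determined up to canonical bijection, and matching it against the defining decomposition of the geometric spectrum identifies the two.

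For part (2), I would trace $\mathrm{id}_{\cO_x}$ through the isomorphism above to see that the universal map attached to $x$ is exactly the canonical composite $\eta_x\colon A\to\Gamma(X,\cO)\to\cO_x$; hence $t_A(x)=\eta_x^{-1}(\fm_x)$ is the preimage of the maximal ideal, which is the content of \cite[equation~(4.4)]{ComAlg} under the identification of part (1). For the image, the multi-representation says every algebra map $A\to B$ with $B$ local factors uniquely as $A\xrightarrow{\eta_x}\cO_x\to B$ with the second arrow local, so the contraction of $\fm_B$ equals $\eta_x^{-1}(\fm_x)=t_A(x)$. Thus the image of $t_A$ is exactly the set of primes arising as contractions of maximal ideals along morphisms into local algebras, which is the definition of visible prime in \cite[\S 4.2]{ComAlg}.

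I expect the main obstacle to be bookkeeping rather than conceptual. The substance is the one-point computation and its naturality in $B$; the delicate points are checking that the universal maps of the abstractly constructed multi-representation coincide with the concrete stalk maps (so that the labelling produced by part (1) is compatible with $t_A$ as defined in \eqref{XA}), and aligning the uniqueness statement for multi-representations, together with the identity ``preimage of the maximal ideal,'' precisely with the constructions in \cite[\S\S 4.2, 4.5]{ComAlg} and \cite[equation~(4.4)]{ComAlg}.
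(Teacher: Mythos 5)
Your proposal is correct and is essentially the paper's own argument: the paper also evaluates the universal property \eqref{eqRea} on one-point locally ringed spaces $(\{\ast\},B)$ for local $B$ to obtain $\Alg\cC(A,U-)\simeq\bigsqcup_{x\in X}\LocAlg\cC(\cO_x,-)$, and treats part (2) as immediate from this identification. Your write-up just spells out the naturality, the uniqueness of multi-representations, and the tracing of $\mathrm{id}_{\cO_x}$ that the paper leaves implicit.
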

\begin{proof}
For part (1), associate to every local algebra the one-point locally ringed space. It then follows by existence of the geometric realisation that 
$$\Alg\cC(A,U-)\;\simeq\;\bigsqcup_{x\in X}\LocAlg\cC(\cO_x,-).$$
This proves part (1). Part (2) is immediate.
\end{proof}

\begin{example}\label{ExGa}
Set $k=\mC$ and $\cC=\Rep\mG_a$ and consider $A:=\Sym V$ with $V$ the 2-dimensional indecomposable $\mG_a$-representation. The algebra $A$ does {\bf not} have a geometric realisation. Indeed, it is proved in \cite[Example~4.4.4 and Lemma~4.5.2(2)]{ComAlg} that $\LocAlg\cC(A,U-)$ is not multi-representable, so we can apply Lemma~\ref{LemMR}.

\end{example}

\subsubsection{} Clearly the condition that every algebra in $\cC$ has a geometric realisation is equivalent to the condition that
\begin{equation}\label{eqGamma}
\Gamma:(\LRS\cC)^{\op}\;\to\; \Alg\cC
\end{equation}
has a left adjoint, or equivalently, that
\begin{equation}\label{eqGamma2}
\Gamma:(\LRS\cC)\;\to\; \Aff\cC
\end{equation}
has a right adjoint. These conditions are not automatic, by Example~\ref{ExGa}.

\subsection{Existence}


\begin{theorem}\label{ThmExistII}
Under Hypothesis~\ref{hypo}, the functor \eqref{eqGamma} has a left adjoint
$$\bSpec\;: \;\Alg\cC\to (\LRS\cC)^{\op}.$$
Moreover, for $(X,\cO):=\bSpec A$, we have that $t_A:X\to\Spec A$ in \eqref{XA} is a homeomorphism and $\eta_A:A\to \Gamma(X,\cO)$ is an isomorphism. In particular, $\bSpec$ is fully faithful.
\end{theorem}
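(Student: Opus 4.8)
The plan is to construct $\bSpec A$ explicitly as a locally ringed space in $\cC$ and then verify the universal property \eqref{eqRea}, exploiting Hypothesis~\ref{hypo} throughout to transfer topological data between $A$ and $A^0$. First I would define the underlying space $X:=\Spec A$ with the subspace topology coming from $\rho_A:\Spec A\hookrightarrow\Spec A^0$ (legitimate by Hypothesis~\ref{hypo}(1)), so that by Hypothesis~\ref{hypo}(4) the sets $\{\Spec A_f\mid f\in A^0\}$ form a basis. On this basis I would define a sheaf of $\cC$-algebras $\cO$ by the assignment $\Spec A_f\mapsto A_f$, with the evident restriction maps induced by the localisation morphisms $A_f\to A_{fg}$; the sheaf axioms on this basis reduce, exactly as in Proposition~\ref{LemQCoh1}, to the equaliser statement for finite generating families $\{f_a\}\subset A^0$, which I would prove by the same $\varinjlim$-chasing argument (using $A_f=\varinjlim A$ along multiplication by $f$, and a partition of unity $1=\sum_a e_a f_a^N$). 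By Corollary~\ref{CorBsh} this basis-sheaf extends uniquely to a genuine $\cC$-sheaf on $X$, and Lemma~\ref{LemfInv}(2) together with \cite[Proposition~5.4.4]{ComAlg} on localisation at primes ensures that each stalk $\cO_x=A_{\fp}$ is local, so that $(X,\cO)\in\LRS\cC$.

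Next I would establish that $\eta_A:A\to\Gamma(X,\cO)$ is an isomorphism: this is precisely the global-sections case $f=1$ of the sheaf construction above (the equaliser for the cover by all $\Spec A_f$, or simply $\cO(X)=\cO(\Spec A_1)=A$ by construction). For the homeomorphism claim, the map $t_A$ sends $x\in X$ to the preimage in $A$ of the maximal ideal of $\cO_x=A_{\fp}$, which is just $\fp$ itself, so $t_A$ is the identity on points and a homeomorphism by the choice of topology on $X$.

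The main work, and the step I expect to be the principal obstacle, is verifying the universal property \eqref{eqRea}: that for every $\bY=(Y,\cO_Y)\in\LRS\cC$ the composition with $\eta_A$ gives a bijection
$$\LRS\cC(\bY,\bSpec A)\;\xrightarrow{\sim}\;\Alg\cC(A,\Gamma(Y,\cO_Y)).$$
Given an algebra morphism $\psi:A\to\Gamma(Y,\cO_Y)$, I would reconstruct the continuous map $g:Y\to X$ on points by sending $y\in Y$ to the preimage under $A\to\Gamma(Y,\cO_Y)\to\cO_{Y,y}$ of the maximal ideal; that this lands in $X=\Spec A$ and is continuous for the subspace topology follows from Lemma~\ref{LemfInv}(1) (an element of $A^0$ is invertible in the stalk iff its image avoids the maximal ideal) and the characterisation of $X_f$ in \ref{DefXf}, since $g^{-1}(\Spec A_f)=Y_{\psi(f)}$ is open. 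The sheaf morphism $g^\sharp:\cO\to g_\ast\cO_Y$ is then forced on the basis: on $\Spec A_f$ it must be the localisation $A_f\to\Gamma(Y_{\psi(f)},\cO_Y)$ of $\psi$, well-defined because $\psi(f)$ is invertible there, and locality of the induced stalk maps is automatic from the construction of $g$. Conversely, the fact that $\Gamma(\bSpec A)=A$ means any morphism $\bY\to\bSpec A$ induces such a $\psi$, and the two assignments are mutually inverse because a locally ringed-space morphism into $\bSpec A$ is determined on the basis $\{\Spec A_f\}$ by its effect on global sections together with the locality condition forcing the underlying map of spaces. The delicate point throughout is that the locality condition in the definition of $\LRS\cC$-morphisms (Definition~\ref{DefsLRS}) precisely pins down the map on points from the algebra data, for which Lemma~\ref{LemfInv} and Hypothesis~\ref{hypo} are exactly the inputs needed; once $\eta_A$ is shown to be an isomorphism, full faithfulness of $\bSpec$ is the special case $\bY=\bSpec B$ of the adjunction.
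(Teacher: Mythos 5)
Your proposal is correct and follows essentially the same route as the paper: both build the structure sheaf on the basis $\{\Spec A_f\}$ with values $A_f$ via the equaliser argument of Proposition~\ref{LemQCoh1}, invoke Corollary~\ref{CorBsh} (the paper phrases this as first forming $\widetilde{A}$ on $\Spec A^0$ and pulling back along $\rho_A$, which is the same construction), identify the stalks as the local algebras $A_{\fp}$ via \cite[Proposition~5.4.4]{ComAlg}, and treat the universal property \eqref{eqRea} as a routine verification. If anything, you supply more detail than the paper on the adjunction check (the paper dismisses it as ``a standard exercise''), and your sketch of that check --- recovering the point map from the locality condition and the sheaf map from global sections on the basis --- is the right one.
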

\begin{proof}
First we can consider $A$ as an object in $\Mod_{\bC}A^0$. The proof of Proposition~\ref{LemQCoh1} yields a sheaf (of algebras) $\widetilde{A}$ on $\Spec A^0$, with global sections $A$. We define $(X,\cO)=\bSpec A:=\rho_A^{-1}\widetilde{A}$, the inverse image sheaf on the subspace $\Spec A\subset \Spec A^0$, see Corollary~\ref{CorBsh}. In particular, we have $\cO(\Spec A_f)\simeq A_f$, so $\Gamma(\bSpec A)$ is indeed $A$.

That $(X,\cO)$ is a {\em locally} ringed space follows from \cite[Proposition~5.4.4]{ComAlg} and its proof. Indeed, for $\fp\in\Spec A$, by construction, we find that $\cO_{\fp}$ is the localisation of $A$ at the multiplicative set $A^0\backslash \fp^0$, which is proved to be local {\it loc. cit.}

Proving isomorphism~\eqref{eqRea}, that is
$$\LRS\cC(\bY,\bSpec A)\;\xrightarrow{\sim}\; \Alg\cC(A,\Gamma(Y,\cO_Y)),$$ is now a standard exercise.
\end{proof}

\begin{example}\label{ExInf}
Consider $\cC=\Rep G$ for an infinitesimal group scheme $G$. It follows from \cite[Proposition~5.2.4]{ComAlg} and the above construction that, for $A\in\Alg\cC$, the locally ringed space $\bSpec A$ can be identified with the affine $k$-scheme $(X,\cO)$ corresponding to $A$, with appropriate structures of $G$-representations on the $k$-algebras $\cO(U)$.
\end{example}

Assume for the rest of the section that Hypothesis \ref{hypo} is satisfied. Recall the notation from Lemma~\ref{LemX0}.

\begin{remark}\label{SpecAf}\label{RemSpecA0}
\begin{enumerate}
\item It follows from the construction that, for $f\in A^0$ and $(X,\cO)=\bSpec A$, we have $\bSpec A_f\simeq (X_{f}, \cO|_{X_{f}})$.
\item Assume that $\rho_A$ is actually a homeomorphism (for instance $\cC$ is {\bf MN} and {\bf GR}). It then follows that for all $A\in\Alg\cC$, we have a canonical isomorphism
$$(\bSpec A)^0\;\xrightarrow{\sim}\; \bSpec (A^0).$$

\item If $\cC$ is {\bf MN} and {\bf GR}, it follows quickly from \cite[Theorem~5.3.1(1)]{ComAlg} that we have a canonical isomorphism
$$\bSpec (A_0)\;\xrightarrow{\sim}\; (\bSpec A)_0.$$
\end{enumerate}
\end{remark}

\begin{lemma}\label{cocomplete}
The category $\LRS\cC$ has all colimits and $\LRS\cC\to\Top$ is cocontinuous.
\end{lemma}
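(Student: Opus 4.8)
The plan is to prove that $\LRS\cC$ is cocomplete and that the forgetful functor to $\Top$ preserves colimits, by constructing colimits explicitly on the underlying space and then equipping it with a suitable structure sheaf. First I would recall the standard strategy from the classical case (for $\LRS_k$) and check that it survives replacing $\Vecc_k$ by a general tensor category $\cC$. Given a diagram $\{\bX_i = (X_i,\cO_i)\}$ in $\LRS\cC$, I would take the colimit $X := \colim_i X_i$ in $\Top$ with structure maps $\iota_i : X_i \to X$, and then define a candidate structure sheaf as a limit over the diagram: concretely, $\cO := \varprojlim_i (\iota_i)_\ast \cO_i$, the subsheaf of the product cut out by the compatibility conditions coming from the morphisms in the diagram. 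Since $\Ind\cC$ is a Grothendieck category it has all small limits, so this sheaf of $\cC$-algebras is well-defined; the point is to verify that its stalks are \emph{local} algebras and that $(X,\cO)$ together with the evident maps $\bX_i \to (X,\cO)$ satisfies the universal property of the colimit in $\LRS\cC$.

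The key steps, in order, would be: (i) form $X = \colim_i X_i$ in $\Top$ and the sheaf $\cO$ as the equaliser/limit described above, so that by construction $\Top$-cocontinuity is immediate; (ii) compute the stalks $\cO_x$ and show each is local --- here I would fix $x \in X$ and analyse the fibre of the limit sheaf, reducing via filtered colimits of open neighbourhoods to a limit of the local stalks $\cO_{i,y}$ over the points $y \in X_i$ mapping to $x$, and invoke Lemma~\ref{LemfInv}(1) together with the locality of the morphisms $f_i^\sharp$ to see that the resulting algebra has a unique maximal ideal; (iii) check the universal property, i.e.\ that giving a morphism $(X,\cO) \to \bY$ in $\LRS\cC$ is the same as giving a compatible cocone $\bX_i \to \bY$, where the subtle part is confirming that the canonically induced map on the colimit is again \emph{local} at each point, which follows once step (ii) identifies the maximal ideal of $\cO_x$. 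A clean alternative, which I might prefer, is to reduce directly to the classical result: by Lemma~\ref{LemX0} the inclusion $\LRS_k \hookrightarrow \LRS\cC$ has both adjoints, and the forgetful functor $\LRS\cC \to \Top$ factors as $\LRS\cC \xrightarrow{(-)^0} \LRS_k \to \Top$; since the classical statement (cocompleteness of $\LRS_k$ with cocontinuous projection to $\Top$) is known, one transports the construction through $H^0$, using that $H^0$ and sheafification commute with the relevant (co)limits.

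The main obstacle I expect is step (ii), the locality of stalks. In the classical setting one uses that a filtered colimit (or the relevant limit) of local rings along local homomorphisms is local, but the analogue here must be phrased internally to $\cC$: the relevant notion of local algebra (unique maximal ideal in $\Ind\cC$) interacts with limits less transparently than in the commutative-ring case, and one must be careful that the structure sheaf defined as a \emph{limit} (rather than a colimit) of the $\cO_i$ still has local stalks. I would handle this by passing to $A^0$ via Lemma~\ref{LemfInv}(2) --- which says $\cO_x$ is local iff $\cO_x^0$ is local with maximal ideal $\fm^0$ --- thereby reducing the locality question to the classical statement for the $k$-valued sheaf $\cO^0 = \varprojlim_i (\iota_i)_\ast \cO_i^0$, where ordinary commutative algebra applies and where $H^0$'s exactness and commutation with filtered colimits (recorded in \ref{Bsheaf} and the stalk formula $\cF^0_x \simeq (\cF_x)^0$) let me identify $\cO_x^0$ with the corresponding classical colimit stalk. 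The remaining verifications --- that the equaliser condition makes $\cO$ an honest sheaf of $\cC$-algebras and that the universal property holds --- are routine diagram chases that I would not expand in detail.
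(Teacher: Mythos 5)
Your construction of the colimit (colimit of underlying spaces, structure sheaf given by the limit of the pushforwards) matches the paper's, which reduces to coproducts and coequalisers and equips the coequaliser space with the equaliser sheaf; for both, cocontinuity of $\LRS\cC\to\Top$ is built into the construction. The genuine gap is your step (ii). You justify locality of the stalks by claiming that Lemma~\ref{LemfInv}(2) says ``$\cO_x$ is local iff $\cO_x^0$ is local'', but that lemma states only one implication: if $A$ is local then $A^0$ is local with maximal ideal $\fm^0$. What your argument needs is the converse, and the converse is neither proved in the paper nor true for general tensor categories. For instance, in $\cC=\Rep\mG_a$ of Example~\ref{ExGa}, the algebra $B=k[y]_{(y)}[x]$ with action $t\cdot x=x+ty$, $t\cdot y=y$ has $B^0=k[y]_{(y)}$ local, yet $(x,y)B$ and $(x-1,y)B$ are two distinct maximal ideals of $B$. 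So the reduction to the classical sheaf $\cO^0$ cannot be formal; it must exploit Hypothesis~\ref{hypo}, and this is exactly where the paper does work that your proposal skips: after quoting the classical fact that the ringed-space coequaliser of $\bX^0\rightrightarrows\bY^0$ is again a locally ringed space, it uses that $Y\to Z$ is \emph{surjective} to produce, for each $z\in Z$, a stalk morphism $\cO_{Z,z}\to\cO_{Y,y}$ into an algebra that is local in $\Ind\cC$ and local at the $H^0$-level, and then invokes \cite[Lemma~5.4.5]{ComAlg} to conclude both that $\cO_{Z,z}$ is local and that $\bY\to\bZ$ is a morphism in $\LRS\cC$. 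Without that input (or an actual proof of the converse implication under Hypothesis~\ref{hypo}), your step (ii) --- and hence step (iii), which depends on it --- does not close.

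Two secondary points. Your ``clean alternative'' via Lemma~\ref{LemX0} is not a proof: since $(-)^0$ is left adjoint to the inclusion $\LRS_k\hookrightarrow\LRS\cC$, it preserves colimits \emph{once they exist}, but adjoints to an inclusion provide no mechanism for manufacturing colimits in $\LRS\cC$ out of classical ones. Also, your description of the stalk of the limit sheaf as ``a limit of the local stalks $\cO_{i,y}$ over the points $y\in X_i$ mapping to $x$'' is inaccurate: open neighbourhoods of $x$ pull back to neighbourhoods of the whole fibre, not of single points, and for infinite diagrams the stalk (a filtered colimit) need not commute with the infinite limit defining $\cO$; the paper's reduction to coproducts and binary coequalisers, together with its outsourcing of the stalk analysis to the classical case, is what sidesteps both issues.
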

\begin{proof}
It suffices to consider coproducts and coequalisers. That $\LRS\cC$ has all coproducts is straightforward.

It is easy to see that coequalisers in the category of all (not necessarily locally) ringed spaces exist and the underlying topological spaces are also coequalisers. Concretely, for a parallel set of arrows $\bX\rightrightarrows \bY$ in $\LRS\cC$, we can define a ringed space on the equaliser~$Z$ in $\Top$ of $X\rightrightarrows Y$. The ring of functions is defined via the equalisers
$$\cO_Z(U)\;\to\; \cO_Y(V)\;\rightrightarrows \;  \cO_X(W)$$
for an open $U\subset Z$, with preimage $V\subset Y$ and primage $W\subset X$ under either composition.

We will also use the classical fact that for $\cC=\Vecc$, this coequaliser remains in the category of {\em locally} ringed spaces. For two arrows $\bX\rightrightarrows\bY$, we thus find a ringed space $\bY\to\bZ$ such that $\bY^0\to \bZ^0$ is a morphism in the category of ordinary locally ringed spaces. That $\bY\to\bZ$ is a morphism in $\LRS\cC$ follows from a direct application of \cite[Lemma~5.4.5]{ComAlg}, since $Y\to Z$ is surjective.
\end{proof}

\begin{remark}
That  $\Gamma:\LRS\cC\to\Aff \cC$ is cocontinuous follows from Theorem~\ref{ThmExistII} or from construction.
\end{remark}

\begin{prop}\label{SpecProd}\label{CorQuoSp}
Let $\bC$ be {\bf MN} and {\bf GR} (in particular \ref{hypo} is satisfied).
\begin{enumerate}
\item For morphisms $A\leftarrow C\to B$ in $\Alg\cC$, the canonical map
$$\Spec (A\otimes_CB)\to (\Spec A)\times_{\Spec C}(\Spec B)$$
is surjective. 
\item For a faithfully flat algebra morphism $A\to B$, the induced $\Spec B\to\Spec A$ makes $\Spec A$ into a quotient space of $\Spec B$.
\item For a faithfully flat algebra morphism $A\to B$, the diagram
$$\bSpec(B\otimes_AB)\rightrightarrows \bSpec B\to \bSpec A$$
is a coequaliser in $\LRS\cC$. 
\end{enumerate}
\end{prop}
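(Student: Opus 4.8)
The plan is to treat the three parts in order: the first is the geometric heart, and the latter two are formal consequences of it together with faithfully flat descent. Throughout I use the consequences of \textbf{MN} and \textbf{GR} recorded in~\ref{ConsMNGR}, in particular that simple algebras are field extensions of $k$, that non-zero algebras have a prime ideal, and that field extensions are faithfully flat.

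For part (1) I would argue as in the classical case via residue fields. Fix a point of the fibre product, that is, primes $\fp<A$ and $\fq<B$ with common image $\mathfrak r<C$ (so $\mathfrak r=\fp\cap C=\fq\cap C$). Localising and passing to the simple quotient produces residue fields $\kappa(\fp),\kappa(\fq),\kappa(\mathfrak r)$, which are field extensions of $k$ by~\ref{ConsMNGR}, together with compatible maps $\kappa(\mathfrak r)\to\kappa(\fp)$ and $\kappa(\mathfrak r)\to\kappa(\fq)$. The key point is that field extensions are faithfully flat, so $\kappa(\fq)\to\kappa(\fp)\otimes_{\kappa(\mathfrak r)}\kappa(\fq)$ is faithfully flat; hence the target is non-zero and carries a prime ideal. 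Pulling this prime back along $A\otimes_CB\to\kappa(\fp)\otimes_{\kappa(\mathfrak r)}\kappa(\fq)$ yields a prime of $A\otimes_CB$ whose images in $\Spec A$ and $\Spec B$ are exactly $\fp$ and $\fq$ (the only prime of a residue field is the maximal one, which contracts to $\fp$, resp.\ $\fq$), proving surjectivity.

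For part (2), surjectivity of $f\colon\Spec B\to\Spec A$ is the special case $C=A$ of part (1), and applying (1) to $B\leftarrow A\to B$ shows that $\Spec(B\otimes_AB)\to(\Spec B)\times_{\Spec A}(\Spec B)$ is surjective, so the equivalence relation on $\Spec B$ generated by the two projections is precisely \emph{having equal image in} $\Spec A$. It then remains to identify the topology on $\Spec A$ with the quotient topology, i.e.\ to show that a subset whose preimage is open is itself open. This is faithfully flat descent of open subsets: a saturated open $f^{-1}(U)\subset\Spec B$ carries a canonical descent datum (its two pullbacks to $\Spec(B\otimes_AB)$ agree, as $f$ coequalises the projections), and such data descend along the faithfully flat $A\to B$, see \cite[\S4.10]{Brand}; the descended open pulls back to $f^{-1}(U)$, hence by surjectivity equals $U$. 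For part (3), the coequaliser $\bZ$ of $\bSpec(B\otimes_AB)\rightrightarrows\bSpec B$ exists in $\LRS\cC$ by Lemma~\ref{cocomplete}, and its underlying space is the coequaliser in $\Top$, which by part (2) is $\Spec A$. The cocone $\bSpec B\to\bSpec A$ coequalises the two projections, since both composites equal $\bSpec$ of the coinciding maps $A\to B\otimes_AB$; the universal property then supplies a canonical morphism $\bZ\to\bSpec A$, which is the identity on spaces. It remains to compare structure sheaves on the basis $\{(\Spec A)_f\mid f\in A^0\}$. Using the explicit description of the coequaliser in Lemma~\ref{cocomplete} together with $f^{-1}((\Spec A)_f)=(\Spec B)_f$ and its analogue over $B\otimes_AB$, one gets
$$\cO_{\bZ}((\Spec A)_f)\;=\;\mathrm{eq}\big(B_f\rightrightarrows B_f\otimes_{A_f}B_f\big).$$
As $A_f\to B_f$ is again faithfully flat, the augmented complex $A_f\to B_f\rightrightarrows B_f\otimes_{A_f}B_f$ is exact, so this equaliser is $A_f=\cO_{\bSpec A}((\Spec A)_f)$; since these opens form a basis, $\bZ\to\bSpec A$ is an isomorphism.

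The main obstacle is the descent underlying parts (2) and (3): one needs both that saturated open subsets descend along a faithfully flat morphism (yielding the quotient topology) and that the degree-zero Amitsur complex of a faithfully flat map is exact (yielding the structure sheaf). Both are forms of faithfully flat descent in $\cC$, imported from \cite{Brand, ComAlg}; once they are in hand, the remainder is the formal bookkeeping above, the only genuinely geometric input being the non-vanishing of tensor products of residue fields in part~(1), which itself rests on the faithful flatness of field extensions under \textbf{MN} and \textbf{GR}.
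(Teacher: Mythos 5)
Your parts (1) and (3) are sound and essentially coincide with the paper's own argument: for (1) the paper passes to the fraction fields of the domains $A/\fp_1$, $B/\fp_2$, $C/\fp$ via \cite[Proposition~3.2.2]{ComAlg} where you use residue fields of localisations (the same fields, reached differently), and for (3) the paper makes exactly your equaliser computation on the basic opens $(\Spec A)_f$, citing \cite[Remark~2.3.4]{ComAlg} for the exactness of $A_f\to B_f\rightrightarrows (B\otimes_AB)_f$. The genuine problems are both in part (2), which is the technical heart (and on which your part (3) depends). First, surjectivity of $\Spec B\to\Spec A$ is \emph{not} the special case $C=A$ of part (1): with the cospan $A\xleftarrow{\mathrm{id}}A\to B$, both the source and the target of the map in (1) become $\Spec B$ and the statement is a tautology. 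More fundamentally, part (1) is vacuously true whenever a fibre is empty, so it can never prove that fibres are non-empty; surjectivity genuinely needs the faithful flatness of $A\to B$ (for instance via $\kappa(\fp)\otimes_AB\neq 0$ together with the existence of primes in non-zero algebras), which the paper imports as \cite[Theorem~4.2.6 and Proposition~4.3.1]{ComAlg}.

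Second, your quotient-topology step rests on a descent claim that the cited source does not contain and that is, in effect, the statement to be proved. Brandenburg's \cite[\S 4.10]{Brand} is faithfully flat descent for \emph{modules} (this is what the paper uses it for, namely descent of invertible modules); it says nothing about open subsets of spectra. Nor can a saturated open be fed into module descent: saturation only says that the two pullbacks to $\Spec(B\otimes_AB)$ of the corresponding radical ideal have the same \emph{radical}, not that they agree as submodules equipped with a cocycle datum, so there is no descent datum in the module-theoretic sense. Indeed, "saturated opens descend along faithfully flat morphisms" is essentially a reformulation of the submersion property itself. The paper handles this by adapting the classical proof of \cite[I.2.2.7]{DG} (going-down for flat morphisms together with constructibility-type arguments), with part (1) substituting for \cite[I.1.5.4]{DG}; some such non-formal topological input is unavoidable here, and your proposal is missing it.
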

\begin{proof}
For part (1), consider a pair $(\fp_1,\fp_2)$ in the target of the map, denote by $\fp$ the corresponding prime ideal in $C$. By \cite[Proposition~3.2.2]{ComAlg}, the domains $A/\fp_1$, $B/\fp_2$ and $C/\fp$ are $k$-algebras and we denote their fields of fractions by $F_1,F_2,F$. The preimage of any prime ideal in the (non-zero) target of
$$A\otimes_CB\to F_1\otimes_FF_2,$$
is sent to $(\fp_1,\fp_2)$.

That the map in part (2) is surjective follows from \cite[Theorem~4.2.6 and Proposition~4.3.1]{ComAlg}. That this makes $\Spec A$ into a quotient of $\Spec B$ can be proved precisely as in the case $\cC=\Vecc_k$, see for instance the proof of \cite[I.2.2.7]{DG}. In particular, part (1) of the proposition is used as the generalisation of the corresponding \cite[I.1.5.4]{DG}.

For part (3) we observe first that the diagram in $\mathsf{Top}$
$$\Spec(B\otimes_AB)\rightrightarrows \Spec B\to \Spec A$$
is a coequaliser. Indeed, since projections onto quotients are effective epimorphisms in $\Top$, this follows from the combination of parts (1) and (2). That the diagram in (3) is a coequaliser in the category of ringed spaces (so in particular in $\LRS\cC$) then follows from construction of the coequaliser in the proof of \autoref{cocomplete} and the observation that $A_f\to B_f$ is faithfully flat for each $f\in A^0$, so that $A_f$ corresponds to the equaliser of $B_f\rightrightarrows (B\otimes_AB)_f$, see \cite[Remark~2.3.4]{ComAlg}.
\end{proof}

\begin{remark}\label{Remz}
Consider the morphism $A\to \prod_i A_{f_i}$ for a finite set $\{f_i\in A^0\}$ which generates $A$. Then the conclusions in Proposition~\ref{SpecProd}(2) and (3) remain true without the additional assumptions {\bf MN} and {\bf GR}, with simplified proof.
\end{remark}

\subsection{Geometric realisations of functors}

We continue to assume Hypothesis~\ref{hypo}.

\subsubsection{} Using the functor $\bSpec$ from Theorem~\ref{ThmExistII}, we define the functor
\begin{equation}\label{eqFOP}
h:\LRS\cC\;\to\; \Fun\cC,\quad \bX\mapsto h_{\bX}=\LRS\cC(\bSpec -, \bX).
\end{equation}
Clearly, we have $h_{\bSpec A}\simeq \Phi_A$, but more generally:
\begin{example}\label{ExFunP}
Consider an algebra $A$ with ideal $I<A$, set $\bSpec A= (X,\cO)$ and let $U\subset X$ be the open complement of $V(I)$.
It is a standard exercise to verify that the functor in \eqref{eqFOP} sends $(U,\cO|_U)$ to $\Phi_A^I$.
\end{example}

\subsubsection{} If we take the convention that algebras are restricted in size by some cardinality, the category $\Alg\cC$ is essentially small. Since $\LRS\cC$ is cocomplete, by Lemma~\ref{cocomplete}, 
we have the left Kan extension of $\bSpec$ along the Yoneda embedding, which we denote by
$$|\cdot|:\;\Fun \cC\to\LRS\cC.$$
We refer to $|F|$ as the {\bf geometric realisation} of $F\in\Fun\cC$. To describe this explicitly we replace $\Alg\cC$ with a small equivalent category. Let $D(F)$ be the category of elements of $F$, in particular, the set of objects is $\sqcup_AF(A)$. We denote the forgetful functor $D(F)\to\Alg\cC$ by $a\mapsto A_a$.  We then have
$$|F|\;=\;\colim_{a\in D(F)^{\op}}\bSpec A_a.$$
As for instance in \cite[I.1.4]{DG}, one verifies directly that $|\cdot|$ is the left adjoint of $h$. More concretely, we have isomorphisms
$$\LRS\cC(|F|,\bX)\;\xrightarrow{\sim}\; \Fun(F,h_{\bX}),$$
where $|F|\to\bX$ is sent to the natural transformation which maps $a\in F(A)$ to the composite
$$\bSpec A\;\to\; |F|\;\to\; \bX,$$
where the left morphism is one of defining morphisms of the colimit $|F|$. In particular, we point out that $|\Phi_A|=\bSpec A$.

We summarise the functors in the following diagram
$$
\xymatrix{
\Aff\bC\ar[rrrr]^\Phi\ar@/^1pc/[ddrr]^{\bSpec}&&&&\Fun\bC\ar@/_1pc/[ddll]_{|\cdot|}\\
&&&&      &&|\cdot|\dashv h,\quad \Gamma\dashv\bSpec,\\
&&\LRS\cC\ar@/^1pc/[uull]^{\Gamma} \ar@/_1pc/[uurr]_{h}   &&&& |\cdot|\circ \Phi\simeq\bSpec,\;\;h\circ\bSpec \simeq\Phi.
}$$

\begin{lemma}\label{LemOpSubF}
Consider $F\in \Fun\cC$ and let $(X,\cO):=|F|$ be its geometric realisation in $\LRS\cC$.
\begin{enumerate}
\item A subset $U\subset X$ is open if and only if for each $A\in\Alg\cC$ and $f:\Spec A\to X$, induced from an element of $F(A)$ via
$$F(A)\simeq \Nat(\Phi_A,F)\xrightarrow{|\cdot|}\LRS\cC(\bSpec A,|F|)\to\Top(\Spec A,X),$$
the subset $f^{-1}(U)$ is open in $\Spec A$.
\item There is a canonical bijection between open subsets of $X$ and open subfunctors of $F$.
\end{enumerate}
\end{lemma}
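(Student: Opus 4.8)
The plan is to deduce both parts from a single observation: the underlying space of $|F|$ is a colimit in $\Top$, so its topology is the final one with respect to the structure maps, and these structure maps are exactly the maps $f$ in the statement. First I would invoke Lemma~\ref{cocomplete}: since $\LRS\cC\to\Top$ is cocontinuous and $|F|=\colim_{a\in D(F)^{\op}}\bSpec A_a$, the underlying space $X$ is $\colim_a X_a$ in $\Top$, where $X_a$ is the space of $\bSpec A_a$. By Theorem~\ref{ThmExistII} each $t_{A_a}\colon X_a\to\Spec A_a$ is a homeomorphism, so $X=\colim_a\Spec A_a$. A colimit in $\Top$ carries the final topology with respect to its cocone, so $U\subseteq X$ is open if and only if $f_a^{-1}(U)$ is open in $\Spec A_a$ for every $a$. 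It then remains to check that the cocone leg indexed by $a\in F(A)$ is precisely the map $f$ described, which is immediate from the explicit colimit description of $|\cdot|$ and the formula for the unit $F\to h_{|F|}$ recalled before the lemma (take $\bX=|F|$ and the identity morphism). This proves (1).

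For (2), the strategy is to identify both sides with the same third object: the set of \emph{compatible families} of open subsets $\{W_a\subseteq\Spec A_a\}_{(A,a)\in D(F)}$, where compatibility for a morphism $u\colon A\to B$ with $F(u)(a)=b$ means $W_b=\Spec(u)^{-1}(W_a)$. On the geometric side, subsets of the colimit $X$ are in canonical bijection with compatible families of subsets $\{U_a\subseteq\Spec A_a\}$ (a subset of $X$ is a cocone to the two-point set), and by part (1) such a subset is open precisely when every $U_a$ is open; this identifies open subsets of $X$ with compatible families of open sets.

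On the functor side I would use that, under Hypothesis~\ref{hypo}, the open subsets of $\Spec A$ are exactly the complements $\Spec A\setminus V(I)$ and that, by~\ref{OpenFun}, the open subfunctors of $\Phi_A$ are exactly the $\Phi_A^I$, so $\Phi_A^I\leftrightarrow\Spec A\setminus V(I)$ is a bijection (both sides depending only on $\sqrt I$, i.e.\ on $V(I)$). Given an open subfunctor $G\subseteq F$, the fibre products $G\times_F\Phi_A=\Phi_A^{I_a}$ produce an open $W_a=\Spec A\setminus V(I_a)$ for each $(A,a)$; the elementary identity $G\times_F\Phi_B=\Phi_u^{-1}(G\times_F\Phi_A)$ (coming from $\tilde b=\tilde a\circ\Phi_u$ for $b=F(u)(a)$) translates under the above bijection into $W_b=\Spec(u)^{-1}(W_a)$, so $\{W_a\}$ is compatible. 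Conversely a compatible family $\{W_a\}$ defines $G$ by declaring $a\in G(A)$ iff $W_a=\Spec A$ (equivalently $G\times_F\Phi_A=\Phi_A$, equivalently $a\in G(A)$ by the subfunctor property), and one checks directly that this is an open subfunctor recovering $\{W_a\}$. Chaining the two identifications gives the desired canonical bijection; concretely it sends an open $U\subseteq X$ to the subfunctor $G_U$ with $G_U(A)=\{a\in F(A)\mid f_a(\Spec A)\subseteq U\}$.

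The main obstacle, and the only place where Hypothesis~\ref{hypo} is genuinely used, is the bookkeeping on the functor side: verifying that the assignment $a\mapsto(G\times_F\Phi_A=\Phi_A^{I_a})$ is functorial along $D(F)$ in a way matching the preimage maps $\Spec(u)^{-1}$ on open subsets, i.e.\ the compatibility $W_b=\Spec(u)^{-1}(W_a)$. This rests on $\Phi_A^I$ depending only on $V(I)$ and on every open subset of $\Spec A$ arising as a complement of some $V(I)$, both of which hold under~\ref{hypo}; once these are in place, the bijection and its inverse are formal.
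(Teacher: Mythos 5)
Your proof is correct and takes essentially the same route as the paper: part (1) is the paper's argument (final topology on the colimit over $D(F)$, with the cocone legs identified with the maps induced by elements of $F(A)$), and part (2) matches the paper's use of the correspondence between open subfunctors $\Phi_A^I\subset\Phi_A$ and open subsets of $\Spec A$ together with compatibility along morphisms of $D(F)$, the only cosmetic difference being that you reconstruct the subfunctor pointwise, via $G_U(A)=\{a\in F(A)\mid f_a(\Spec A)\subseteq U\}$, where the paper realises it as the colimit of the $\Phi_{A_a}^{I_a}$ using the fibre-product squares. One side remark is inaccurate, though it does not affect the argument: the two facts you attribute to Hypothesis~\ref{hypo} (that opens of $\Spec A$ are complements of sets $V(I)$, and that $\Phi_A^I$ depends only on $V(I)$) hold unconditionally, whereas the hypothesis genuinely enters through Theorem~\ref{ThmExistII} itself (the existence of $\bSpec$ and the homeomorphism $t_A$, which you correctly invoke) and, if one wants full rigour, in recovering $V(I)$ from the subfunctor $\Phi_A^I$ via residue fields at primes.
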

\begin{proof}

By Lemma~\ref{cocomplete}, $X$ is the colimit of 
$$D(F)^{\op}\;\to\;\Aff\cC\;\xrightarrow{\Spec}\;\Top.$$
In particular a subset $U\subset X$ is open if and only if every inverse image under a defining $\Spec A\to X$ is open. This can be reinterpreted as part (1) of the lemma.

Now we prove part (2).
For an open subset $U\subset X$, denote, for each $a\in D(F)$, the inverse image in $\Spec A_a$ by $U_a$ and the corresponding radical ideal by $I_a<A_a$. Each morphism in $D(F)$ induces a commutative square
$$\xymatrix{
\Phi_{A_a}\ar[rr]&& \Phi_{A_b}\\
\Phi_{A_a}^{I_a}\ar[rr]\ar@{^{(}->}[u]&& \Phi^{I_b}_{A_b}\ar@{^{(}->}[u]
}$$
which is actually a fibre product. It follows that the colimit over $\{\Phi_{A_a}^{I_a}\,|\, a\in \Ob D\}$ yields an open subfunctor of $F$.

On the other hand, consider a subfunctor $G\subset F$, such that each $\Phi_{A_a}\times_FG$ is an open subfunctor of $\Phi_{A_a}$. These open subfunctors define open subsets $U_a\subset \Spec A_a$. Similarly to the above these can be used to define an open subset of $X$, which yields an inverse to the previous assignment.
\end{proof}

\begin{lemma}\label{LemFais}
Assume that $\cC$ is {\bf MN} and {\bf GR}.
\begin{enumerate}
\item The functor $h:\LRS\cC\to\Fun\cC$ takes values in $\Fais\cC$.
\item For $F\in \Fun\cC$ and $F\to F_1\in\Fais\cC$ its sheafification, $|F|\to |F_1|$ is an isomorphism.
\end{enumerate}
\end{lemma}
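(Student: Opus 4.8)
The plan is to prove the two parts in sequence, with part (1) providing the conceptual groundwork for part (2).

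For part (1), I would show that $h_{\bX} = \LRS\cC(\bSpec -, \bX)$ satisfies the two faisceau conditions: commuting with finite products and the equaliser condition for faithfully flat covers. The finite-product condition should follow from the fact that $\bSpec$ sends products of algebras to coproducts of locally ringed spaces (a finite product $\prod_i A_i$ in $\Alg\cC$ corresponds to a disjoint union of spectra), so that $h_{\bX}(\prod_i A_i) \simeq \prod_i h_{\bX}(A_i)$. The crucial equaliser condition is exactly where Proposition~\ref{SpecProd}(3) enters: for a faithfully flat $A \to B$, that proposition tells us that
$$\bSpec(B\otimes_A B) \rightrightarrows \bSpec B \to \bSpec A$$
is a coequaliser in $\LRS\cC$. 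Applying the contravariant hom-functor $\LRS\cC(-, \bX)$ turns this coequaliser into an equaliser
$$h_{\bX}(A) \to h_{\bX}(B) \rightrightarrows h_{\bX}(B\otimes_A B),$$
which is precisely the sheaf condition. This is the step where the hypotheses {\bf MN} and {\bf GR} are genuinely used, since Proposition~\ref{SpecProd}(3) requires them.

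For part (2), I would argue by a universal-property/adjunction computation. Since $|\cdot| \dashv h$ and, by part (1), $h$ factors through the full subcategory $\Fais\cC \subset \Fun\cC$, for any $\bX \in \LRS\cC$ we have natural bijections
$$\LRS\cC(|F|, \bX) \simeq \Fun(F, h_{\bX}).$$
Because $h_{\bX}$ is already a faisceau and $F \to F_1$ is the sheafification (the unit of the reflection $\Fun\cC \to \Fais\cC$), the universal property of sheafification gives $\Fun(F, h_{\bX}) \simeq \Fun(F_1, h_{\bX}) \simeq \Fais\cC(F_1, h_{\bX})$. Chaining these isomorphisms yields
$$\LRS\cC(|F|, \bX) \simeq \LRS\cC(|F_1|, \bX)$$
naturally in $\bX$, so that the canonical map $|F| \to |F_1|$ (induced by functoriality of $|\cdot|$ applied to $F \to F_1$) is an isomorphism by the Yoneda lemma in $\LRS\cC$.

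The main obstacle I anticipate is in part (1), namely verifying the equaliser condition cleanly. While Proposition~\ref{SpecProd}(3) supplies the coequaliser, one must be careful that the induced diagram genuinely has the correct shape after applying the hom-functor, and that the finite-product condition is handled in full (including the empty product, i.e.\ that $h_{\bX}$ of the zero algebra is a one-point set). The second part is then essentially formal, relying only on the adjunction $|\cdot| \dashv h$ together with the universal property of sheafification; the only thing to check is that part (1) legitimately lets us regard $h$ as landing in $\Fais\cC$, which is exactly what part (1) establishes.
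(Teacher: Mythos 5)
Your proposal is correct and follows essentially the same route as the paper: part (1) via Proposition~\ref{SpecProd}(3) (coequaliser in $\LRS\cC$ turned into an equaliser by the contravariant hom-functor, plus the easy product condition), and part (2) as a formal consequence of the adjunction $|\cdot|\dashv h$, the universal property of sheafification, and Yoneda. The paper's proof is just a terser version of exactly this argument.
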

\begin{proof}
We prove part (1). That $h_{\bX}$ commutes with products follows easily. By \autoref{SpecProd}(3), for a faithfully flat algebra morphism $A\to B$, the diagram
$$h_{\bX}(A)\to h_{\bX}(B)\rightrightarrows h_{\bX}(B\otimes_AB)$$
is an equaliser.

Part (2) follows from part (1), the characterisation as sheafification as the left adjoint of $\Fais\cC\hookrightarrow\Fun\cC$ and the adjunction $|\cdot|\dashv h$.
\end{proof}

Using Remark~\ref{Remz} yields the following analogous result.
\begin{lemma}\label{LemFaisz}
\begin{enumerate}
\item The functor $h:\LRS\cC\to\Fun\cC$ takes values in $\Fais_z\cC$.
\item For $F\in \Fun\cC$ and $F\to F_1\in\Fais_z\cC$ its sheafification, $|F|\to |F_1|$ is an isomorphism.
\end{enumerate}
\end{lemma}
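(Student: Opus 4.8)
The plan is to follow the proof of Lemma~\ref{LemFais} essentially verbatim, replacing the fpqc sheaf condition (and the appeal to Proposition~\ref{SpecProd}(3), which there required {\bf MN} and {\bf GR}) by the Zariski sheaf condition together with the unconditional variant recorded in Remark~\ref{Remz}. The decisive simplification is that, under Hypothesis~\ref{hypo}, Proposition~\ref{PropTV} identifies the topology generated by the Zariski pretopology with the one generated by the pretopology of covers $\{A\to A_{f_i}\}$ with $f_i\in A^0$ generating $A$. Since two pretopologies generating the same Grothendieck topology have the same sheaves, $\Fais_z\cC$ coincides with the category of sheaves for this much simpler pretopology, and to prove part~(1) it therefore suffices to verify the sheaf axioms for $h_{\bX}$ only against these covers.

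For part~(1), I would first check that $h_{\bX}$ commutes with finite products; this needs neither {\bf MN} nor {\bf GR}. From the construction of $\bSpec$ in Theorem~\ref{ThmExistII} one reads off directly that $\bSpec(A_1\times A_2)$ is the coproduct $\bSpec A_1\sqcup\bSpec A_2$ in $\LRS\cC$ (the underlying space splits as $\Spec A_1\sqcup \Spec A_2$ and the structure sheaf restricts accordingly), so $h_{\bX}=\LRS\cC(\bSpec-,\bX)$ turns finite products into products. Next, for a cover $A\to B:=\prod_i A_{f_i}$ with the $f_i$ generating $A$, Remark~\ref{Remz} guarantees that Proposition~\ref{SpecProd}(3) still holds, that is,
$$\bSpec(B\otimes_AB)\rightrightarrows\bSpec B\to\bSpec A$$
is a coequaliser in $\LRS\cC$. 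Applying the contravariant functor $\LRS\cC(-,\bX)$, which sends coequalisers to equalisers, yields that
$$h_{\bX}(A)\to h_{\bX}(B)\rightrightarrows h_{\bX}(B\otimes_AB)$$
is an equaliser; using product-commutation to rewrite $B$ as $\prod_i A_{f_i}$ and $B\otimes_AB$ as $\prod_{i,j}A_{f_i}\otimes_A A_{f_j}$, this is exactly the sheaf condition for the cover. Hence $h_{\bX}\in\Fais_z\cC$.

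Part~(2) is then a purely formal consequence, identical to Lemma~\ref{LemFais}(2). Writing $F\to F_1$ for the sheafification into $\Fais_z\cC$, the adjunction $|\cdot|\dashv h$ gives natural isomorphisms $\LRS\cC(|F|,\bX)\simeq\Fun(F,h_{\bX})$ and $\LRS\cC(|F_1|,\bX)\simeq\Fun(F_1,h_{\bX})$ for every $\bX\in\LRS\cC$. Since $h_{\bX}\in\Fais_z\cC$ by part~(1), the universal property of sheafification (as left adjoint of $\Fais_z\cC\hookrightarrow\Fun\cC$) identifies $\Fun(F_1,h_{\bX})\xrightarrow{\sim}\Fun(F,h_{\bX})$ via restriction along $F\to F_1$, and by the Yoneda lemma the induced morphism $|F|\to|F_1|$ is therefore an isomorphism.

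The only genuine point requiring care, and the step I would treat as the main obstacle, is the reduction of the $\Fais_z\cC$ condition to the special covers $\{A\to A_{f_i}\}$: this is precisely what Proposition~\ref{PropTV} licenses under Hypothesis~\ref{hypo}, and it is what lets Remark~\ref{Remz} do the work that Proposition~\ref{SpecProd}(3) did under {\bf MN} and {\bf GR} in Lemma~\ref{LemFais}. Once that reduction is in place, the remainder is the formal transport of coequalisers to equalisers together with the adjunction argument, and no difficulty beyond Lemma~\ref{LemFais} arises.
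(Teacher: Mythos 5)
Your proposal is correct and follows essentially the same route as the paper, whose proof is literally the one-line instruction to rerun the proof of Lemma~\ref{LemFais} with Remark~\ref{Remz} replacing Proposition~\ref{SpecProd}(3). The only difference is that you make explicit the step the paper leaves implicit, namely that by Proposition~\ref{PropTV} (sheaves depending only on the generated Grothendieck topology) it suffices to verify the equaliser condition on the covers $\{A\to A_{f_i}\}$, which is exactly the right justification.
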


\begin{remark}
Consider the diagram of functors
$$
\xymatrix{\Aff\cC\ar[rr]^{\Phi}\ar[d]_{A\mapsto A_0}&&\Fun\cC\ar[d]\ar[rr]^{|\cdot|}&&\LRS\cC\ar[rrd]\ar[d]_{\bX\mapsto \bX_0}\\
\Aff_k\ar[rr]^{\Phi}&&\Fun_k\ar[rr]^{|\cdot|}&&\LRS_k\ar[rr]&&\Top,}$$
where the functors to $\mathsf{Top}$ just take the underlying topological space and the functor from $\Fun\cC$ to $\Fun_k$ restricts a functor to $\Alg_k\subset\Alg\cC$. The left square is always commutative. The rectangle of functors from $\Aff\cC$ to $\LRS_k$ is commutative when $\cC$ is {\bf MN} and {\bf GR}, see Remark~\ref{SpecAf}(3). Under those assumptions, it thus follows from Lemma~\ref{cocomplete} that that the topological space of $|F|$, $F\in\Fun\cC$ only depends on the image of $F$ in $\Fun_k$.
\end{remark}


\section{Schemes}\label{SecSch}
Let $\cC$ be a tensor category that satisfies Hypothesis~\ref{hypo}.

\subsection{Definitions}

\begin{definition}\label{DefScheme}
A {\bf $\bC$-scheme} is a locally ringed space $(X,\cO)\in \LRS\bC$ for which there exists an open cover $\{U\subset X\}$ such that for each $U$ there exists $A_U\in \Alg\bC$ with $(U,\cO|_U)\simeq \bSpec A_U$. 

The category of $\bC$-schemes $\Sch\bC$ is the corresponding full subcategory of $\LRS\bC$.
\end{definition}

\begin{remark}
The fully faithful functor $\bSpec:\Aff\cC\to\LRS\cC$ from Theorem~\ref{ThmExistII}, by definition, takes values in $\Sch\cC\subset\LRS\cC$. We will henceforth refer to an affine scheme either as an object in $\Aff\cC$ or the corresponding $\bSpec A$ in $\Sch\cC$.
\end{remark}

\begin{lemma}\label{LemXf}
For a $\bC$-scheme $\bX=(X,\cO)$ and $f\in A^0$, for $A=\Gamma(X,\cO)$, we have a canonical isomorphism
$$A_f=\cO(X)_f\;\xrightarrow{\sim}\; \cO(X_f).$$
\end{lemma}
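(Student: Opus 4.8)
The statement generalises the classical fact that for a scheme $\bX$ and global function $f$, the localisation of global sections agrees with sections over the non-vanishing locus $X_f$. The plan is to reduce to the affine case via the open affine cover defining $\bX$, and then invoke the sheaf axiom to glue. First I would recall that $X_f\subset X$ is the open subset where $f$ is invertible in the stalk (Definition~\ref{DefXf}), which via $\bX^0$ coincides with the classical non-vanishing locus of the image of $f$ in $\cO^0(X)$. Since $X_f$ is determined by $X^0$ and the image $f\in A^0$, its intersection with each affine open is the corresponding basic open, so the cover $\{U\}$ restricts to a cover $\{U\cap X_f\}$ of $X_f$.

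Concretely, I would first handle the affine case: if $\bX=\bSpec A$, then by Remark~\ref{SpecAf}(1) we have $\bSpec A_f\simeq (X_f,\cO|_{X_f})$, and by Theorem~\ref{ThmExistII} the global sections of $\bSpec A_f$ are canonically $A_f$. This gives $\cO(X_f)\simeq A_f$ directly, which is exactly the claim when $\bX$ is affine. For a general scheme, the map $A_f\to\cO(X_f)$ is the canonical localisation map: the element $f$ becomes invertible in $\cO(X_f)$ (by definition of $X_f$, its image lies in no maximal ideal of any stalk $\cO_x$ with $x\in X_f$, hence is invertible stalkwise, hence invertible in $\cO(X_f)$ by Lemma~\ref{LemfInv}(1)), so $A\to\cO(X_f)$ factors through $A_f$ by the universal property of localisation at $f\in A^0$.

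To prove this factored map is an isomorphism, I would use the sheaf property together with exactness of localisation. Choose an open affine cover $\{U_i=\bSpec A_i\}$ of $X$; then $\{U_i\cap X_f=(U_i)_{f_i}\}$ (where $f_i$ is the image of $f$ in $A_i^0=\cO(U_i)^0$) is an affine open cover of $X_f$, and likewise $\{U_i\cap U_j\}$ refines to affine opens. Writing the sheaf equaliser for $\cO$ on the cover $\{U_i\}$ of $X$ and for $\cO|_{X_f}$ on the cover $\{(U_i)_{f_i}\}$ of $X_f$, and localising the former at $f$, I would compare the two equalisers termwise: each $\cO(U_i)_{f_i}\simeq\cO((U_i)_{f_i})$ by the affine case just established, and similarly on double intersections. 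Since $H^0$, hence localisation at $f\in A^0$, is exact (localisation is flat, \cite[\S 4.1]{ComAlg}) it commutes with the finite equaliser, so the localised equaliser for $X$ is isomorphic to the equaliser computing $\cO(X_f)$. Taking equalisers then yields $A_f=\cO(X)_f\xrightarrow{\sim}\cO(X_f)$.

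\textbf{Main obstacle.} The delicate point is the interchange of localisation with the sheaf equaliser (inverse limit): localisation at $f$ is a filtered colimit $\varinjlim(\cdot\xrightarrow{f}\cdot)$, and colimits do not commute with limits in general. The saving feature is that the equaliser is a \emph{finite} limit when the cover is finite, so I would first reduce to a finite affine cover — legitimate since, via $\bX^0$, quasi-compactness issues reduce to the classical case, and in any event the defining equaliser only involves the terms $\prod_i$ and $\prod_{i,j}$ which I may take over a finite index set after passing to a finite subcover of $X_f$ (one may first prove the statement for $\bX$ itself quasi-compact, or note the claim is local on the basic opens $X_f$ and argue there). Filtered colimits are exact in a Grothendieck category and therefore commute with finite limits, so the interchange is justified. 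The secondary technical point, matching the restriction maps on double intersections with the localised ones, is routine and follows as in the classical case \cite[Chapter I]{EH} once the affine identifications are in place.
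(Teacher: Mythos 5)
Your proposal follows essentially the same route as the paper's (very terse) proof: the affine case holds by the construction of $\bSpec A$ in Theorem~\ref{ThmExistII} (equivalently Remark~\ref{SpecAf}(1)), and the general case is obtained from an affine cover by comparing sheaf equalisers termwise and using exactness of localisation. The details you supply -- that the map $A_f\to\cO(X_f)$ exists by the universal property of localisation because $f$ is stalkwise invertible on $X_f$ via Lemma~\ref{LemfInv}(1), and that the affine identifications on the pieces of the cover drive the termwise comparison -- are exactly what the paper leaves implicit, and you have correctly isolated the real technical point, namely the interchange of the filtered colimit defining localisation with the limit defining the sheaf condition.

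The one step that would fail is your reduction to a finite cover. A $\cC$-scheme as defined in Definition~\ref{DefScheme} need not be quasi-compact, and neither need $X_f$, so one cannot ``pass to a finite subcover''; nor is the claim ``local on basic opens'', since it concerns global sections. This is not pedantry: already for $\cC=\Vecc$ the statement fails without quasi-compactness. Take $X=\bigsqcup_{n\geq 1}\Spec k[t]$ and $f$ the global section restricting to $t$ on each component; then $\cO(X)_f\to\cO(X_f)=\prod_n k[t,t^{-1}]$ is injective but not surjective, since elements of the target may have unbounded pole orders while elements of the source have a common denominator $f^N$. (There is a secondary issue of the same kind on double intersections $U_i\cap U_j$, which need to be quasi-compact for the right-hand terms of the two equalisers to compare -- the usual quasi-separatedness bootstrap, harmless here because intersections of basic opens of an affine are again basic.) In fairness, the paper's own one-sentence proof (``considering an affine cover and using exactness of localisation'') has precisely the same gap, and the lemma is only invoked for quasi-compact schemes (Proposition~\ref{PropQA}), where your argument, like the paper's, is complete. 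So: same approach as the paper, sound under a quasi-compactness hypothesis that both you and the paper leave unstated; but your proposed justifications for dispensing with that hypothesis do not work, and no argument could, since the statement is false in that generality.
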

\begin{proof}
For an affine scheme, this is by construction. For the general case, this follows by considering an affine cover and using exactness of localisation.
\end{proof}

\begin{lemma}\label{LemSober}
For a $\bC$-scheme $\bX=(X,\cO)$, the topological space is sober (taking the closure yields a bijection between points of $X$ and irreducible closed subspaces of $X$).
\end{lemma}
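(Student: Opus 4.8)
The plan is to prove soberness first for affine schemes $\bSpec A$ and then pass to general $\bC$-schemes using the standard fact that soberness is local on open covers. For the gluing step I would record the purely topological lemma that if a space $X$ admits an open cover by sober open subspaces, then $X$ is sober: given an irreducible closed $Z\subseteq X$, choose a cover element $U$ meeting $Z$, so that $Z\cap U$ is nonempty open in the irreducible $Z$ (hence dense) and closed in the sober space $U$, and therefore has a generic point $\eta$; one then checks $\overline{\{\eta\}}^{X}=\overline{Z\cap U}^{X}=Z$, with uniqueness following because each $U$ is $T_0$ and two generic points of $Z$ necessarily lie in a common cover element. Since a $\bC$-scheme is by Definition~\ref{DefScheme} covered by opens of the form $\bSpec A$, this reduces the statement to the soberness of the underlying space of $\bSpec A$, which by Theorem~\ref{ThmExistII} is exactly $\Spec A$.

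For the affine case I would exploit that, by Hypothesis~\ref{hypo}, $\rho_A\colon\Spec A\to\Spec A^0$ is a homeomorphism onto its image, while $\Spec A^0$ is sober as the prime spectrum of an ordinary commutative ring. It then suffices to prove that the image of $\rho_A$ is stable under generization, because a generization-closed subspace $Y$ of a sober space is itself sober: for $Z\subseteq Y$ irreducible closed, the closure $\overline{Z}$ in $\Spec A^0$ has a generic point $\eta$, which is a generization of points of $Z\subseteq Y$ and hence lies in $Y$, so $\eta\in Z$ and $\overline{\{\eta\}}^{Y}=Z$, uniqueness being inherited from $T_0$-ness of the subspace. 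To establish generization-stability I would fix a prime $\fp<A$ and a prime $\mathfrak{r}\subseteq\fp^0$ of $A^0$ and lift $\mathfrak{r}$ to a prime of $A$ via the localisation theory of \cite[Proposition~5.4.4]{ComAlg}: the algebra $A_{\mathfrak{r}}:=A\otimes_{A^0}A^0_{\mathfrak{r}}$ is local, its degree-zero part is the local ring $A^0_{\mathfrak{r}}=(A_{\mathfrak{r}})^0$ by Lemma~\ref{LemfInv}(2), and it is nonzero since this subalgebra injects into it. The preimage $\fq<A$ of the maximal ideal of $A_{\mathfrak{r}}$ is then a prime with $\fq^0=\mathfrak{r}$, exhibiting $\mathfrak{r}$ in the image of $\rho_A$.

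The main obstacle is this second step: subspaces of sober spaces are not sober in general, so the argument genuinely depends on lifting generizations from $A^0$ to $A$, and this is precisely where the local structure of $A_{\mathfrak{r}}$ and the nonvanishing of the localisation are essential. Once that is in place, the reduction to the affine case and the gluing lemma are formal.
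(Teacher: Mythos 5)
Your gluing step (soberness is local on open covers, with the generic-point transfer and the $T_0$ uniqueness argument) is correct, and it is exactly the reduction the paper makes; the paper's proof is precisely ``reduce to the affine case, since sober is a local property, then argue as in the classical case.'' The problem is your treatment of the affine case, where you depart from the classical argument and pass through $\Spec A^0$.

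The gap is the claim that $A_{\mathfrak{r}}:=A\otimes_{A^0}A^0_{\mathfrak{r}}$ is local for an \emph{arbitrary} prime $\mathfrak{r}\subseteq\fp^0$ of $A^0$. The localisation theory you cite, \cite[Proposition~5.4.4]{ComAlg}, as it is used throughout this paper, concerns localisation at prime ideals \emph{of $A$}, i.e.\ at multiplicative sets $A^0\setminus\fq^0$ with $\fq$ a prime of $A$ --- which is exactly the case $\mathfrak{r}=\fq^0$, where $\mathfrak{r}$ already lies in the image of $\rho_A$ and there is nothing to prove. For $\mathfrak{r}$ not of this form, locality of $A_{\mathfrak{r}}$ is essentially \emph{equivalent} to the statement you are trying to establish: if $A_{\mathfrak{r}}$ is local then the contraction of its maximal ideal is a prime lying over $\mathfrak{r}$, and conversely such a prime is needed to identify a unique closed point of $\Spec A_{\mathfrak{r}}$. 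So the argument is circular at its key point. Note also that nonvanishing of $A_{\mathfrak{r}}$ only produces \emph{some} maximal ideal $\fM$, whose contraction $\fq$ satisfies $\fq^0\subseteq\mathfrak{r}$; without locality you cannot force $\fq^0=\mathfrak{r}$, since under Hypothesis~\ref{hypo} alone the image of $\rho_{A_{\mathfrak{r}}}$ is merely some nonempty subspace of $\Spec A^0_{\mathfrak{r}}$, which may miss the closed point. Whether the image of $\rho_A$ is generization-stable under Hypothesis~\ref{hypo} is exactly the kind of statement that is only known when $\rho_A$ is a homeomorphism (where it is vacuous), per the remark following \ref{hypo}.

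The fix is to avoid $\Spec A^0$ altogether and run the classical argument directly on $\Spec A$, which is what the paper intends: an irreducible closed subset is $V(I)$ with $I$ radical; since the radical of an ideal is the intersection of the primes containing it (\cite[Lemma~2.2.5]{ComAlg}), irreducibility together with $V(J)\cup V(K)=V(JK)$ forces $I$ to be prime, and then $I$ is the generic point of $V(I)$, unique because distinct radical ideals have distinct vanishing sets. This needs no lifting of generizations and, in fact, no use of Hypothesis~\ref{hypo} beyond the basic prime-ideal formalism.
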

\begin{proof}
As in the classical case, this follows from the affine case, since being sober is a local property.
\end{proof}

\subsubsection{Fibre product of schemes}
By the definition of $\Spec$ as an adjoint, it follows that
$$\bSpec(A\otimes_CB)\;\simeq\;\bSpec A \times_{\bSpec C}\bSpec B$$
in $\Sch\cC$. More general fibre products in $\Sch\cC$ are defined as in the classical by gluing together the above fibre product.

\begin{definition}
\begin{enumerate}
\item A morphism of $\cC$-schemes $f:\bX\to\bY$ is {\bf surjective} (resp. {\bf injective}) if the underlying map $X\to Y$ is surjective (resp. injective).
\item A morphism of $\cC$-schemes $f:\bX\to\bY$ is {\bf flat} if $\cO_{Y,f(x)}\to \cO_{X,x}$ is a flat morphism for every $x\in X$. The morphism $f$ is {\bf faithfully flat} if it is flat and surjective.
\end{enumerate}
\end{definition}

\begin{remark}
It follows from the basic theory of \cite{ComAlg} (for instance Propositions~4.3.1 and 5.4.4 and Lemma~4.4.3), that an algebra morphism $A\to B$ is (faithfully) flat if and only if $\bSpec B\to \bSpec A$ is (faithfully) flat.
\end{remark}

\begin{lemma}\label{LemMono}
Assume that $\cC$ is {\bf MN} and {\bf GR}, and let $f:(X,\cO_X)=\bX\to\bY=(Y,\cO_Y)$ be a monomorphism in $\Sch\cC$.
\begin{enumerate}
\item $f$ is injective;
\item for every $x\in X$, the composite morphism $\cO_{Y,f(x)}\;\to\;\cO_{X,x}\;\tto\; \cO_{X,x}/\fm\cO_{X,x}$ is an epimorphism in $\Ind\cC$, with $\fm$ the maximal ideal in $\cO_{Y,f(x)}$.
\end{enumerate}
\end{lemma}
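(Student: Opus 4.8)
The plan is to reduce both statements to the affine case and then invoke the commutative-algebra results of \cite{ComAlg} on monomorphisms of local algebras. The key observation is that a monomorphism in $\Sch\cC$ is, in particular, a monomorphism ``stalk-locally'': since fibre products of schemes exist (built from the tensor product $A\otimes_CB$ of algebras) and since $\bSpec$ is fully faithful, the monomorphism condition $\bX\to\bY$ translates into the statement that the diagonal $\bX\to\bX\times_\bY\bX$ is an isomorphism. First I would use this to analyse what happens on stalks, where the local pieces are controlled by affine schemes $\bSpec A$ and the fibre products by $\bSpec(A\otimes_BA)$ for the relevant algebra map $B\to A$.

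\medskip
For part (1), the plan is to show that two points $x,x'\in X$ with $f(x)=f(x')=y$ must coincide. Working locally I may assume $\bX=\bSpec A$ and $\bY=\bSpec B$ are affine with $B\to A$ a monomorphism of algebras, corresponding (via $\bSpec$ being fully faithful and preserving fibre products) to the condition that $A\otimes_BA\to A$, i.e.\ the multiplication map, is an isomorphism. By Theorem~\ref{ThmExistII}, $t_A:X\to\Spec A$ is a homeomorphism, so injectivity of $f$ on points reduces to the statement that $\Spec A\to\Spec B$ is injective. Using Proposition~\ref{SpecProd}(1), surjectivity of $\Spec(A\otimes_BA)\to(\Spec A)\times_{\Spec B}(\Spec A)$, together with the fact that $A\otimes_BA\simeq A$ forces the fibred product of $\Spec A$ with itself over $\Spec B$ to be (a quotient of the image of) $\Spec A$ along the diagonal; this pins down that distinct points of $\Spec A$ cannot have the same image in $\Spec B$. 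The point is that a genuine monomorphism of schemes has ``trivial'' relative diagonal, and Proposition~\ref{SpecProd}(1) is exactly the tool that transports this back to point-set injectivity.

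\medskip
For part (2), I would again localise at $x\in X$ with $y=f(x)$, replacing $\cO_{Y,y}\to\cO_{X,x}$ by a local morphism of local algebras $B\to A$ which, by the monomorphism hypothesis and the previous paragraph, satisfies that the multiplication $A\otimes_BA\to A$ is an isomorphism. The claim is then that $B\to A\tto A/\fm_YA$ is an epimorphism in $\Ind\cC$, where $\fm_Y$ is the maximal ideal of $B$ (note: the statement writes $\fm$ for the maximal ideal of $\cO_{Y,f(x)}$, so $\fm\cO_{X,x}=\fm_YA$). The standard fact, valid in ordinary commutative algebra and transplantable here, is that $A\otimes_BA\xrightarrow{\sim}A$ is equivalent to $B\to A$ being an epimorphism in $\Alg\cC$; reducing modulo the maximal ideal of $B$ then gives surjectivity onto the residue algebra. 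I would invoke \cite[Remark~2.3.4]{ComAlg} characterising epimorphisms via $A\otimes_BA\simeq A$ to get that $B\to A$ is an epimorphism, and then base-change along $B\to B/\fm_Y$ (using right-exactness of $\otimes$ and the identification of the fibre) to conclude the composite is an epimorphism onto $A/\fm_YA$.

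\medskip
The main obstacle I anticipate is justifying that the scheme-theoretic monomorphism condition really does descend to the stalk statement ``$A\otimes_BA\to A$ is an isomorphism'' in this tensor-categorical setting. In the classical case one argues via the functor of points and the diagonal being a closed (indeed iso) immersion, but here I must be careful that being a monomorphism in $\Sch\cC$ (a full subcategory of $\LRS\cC$) interacts correctly with the stalks $\cO_{X,x}=A_{\fp}$, and that the fibre-product description $\bSpec(A\otimes_CB)\simeq\bSpec A\times_{\bSpec C}\bSpec B$ localises correctly at a point of the diagonal. Here the hypotheses \textbf{MN} and \textbf{GR} are essential, since they are what guarantee (via Proposition~\ref{SpecProd}) that the topological spaces of fibre products behave as expected and that localisation at primes is well-behaved; without them the surjectivity in Proposition~\ref{SpecProd}(1) and the good theory of geometric points would be unavailable, and the reduction would break down.
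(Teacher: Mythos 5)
Your proof of part (1) breaks at the sentence ``Working locally I may assume $\bX=\bSpec A$ and $\bY=\bSpec B$.'' Injectivity is not a chart-local statement: two points $x\neq x'$ with $f(x)=f(x')$ need not lie in a common affine open of $X$, so proving that $f$ restricted to each affine chart is injective does not rule out collisions between points of \emph{different} charts. Your own global formulation (the diagonal $\bX\to\bX\times_{\bY}\bX$ is an isomorphism) can repair this: choose an affine $V\ni y$ and affine opens $U\ni x$, $U'\ni x'$ inside $f^{-1}(V)$, observe that $U\times_V U'=\bSpec(A\otimes_B A')$ is an affine open of $\bX\times_{\bY}\bX$, apply Proposition~\ref{SpecProd}(1) to produce a point $z$ of it projecting to $x$ and $x'$, and then the diagonal isomorphism forces $x=x'$. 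But as written, with the same algebra $A$ on both sides of $A\otimes_BA$, the argument only yields chart-wise injectivity. (The paper avoids fibre products entirely: under \textbf{MN} and \textbf{GR} every simple algebra is a field, so each point admits a field-valued point $\bSpec K\to\bX$, and the classical argument of \cite[Lemme I.1.5.3]{DG} applies.) A further slip: $B\to A$ should be an \emph{epimorphism} of algebras --- that is what being a monomorphism of affine schemes amounts to, via the adjunction $\Gamma\dashv\bSpec$ --- not a monomorphism; the condition $A\otimes_BA\xrightarrow{\sim}A$ that you actually use is the correct one.

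In part (2), everything up to ``$B/\fm\to A\otimes_B B/\fm\simeq A/\fm A$ is an epimorphism in $\Alg\cC$'' is sound (base change of an $\Alg\cC$-epimorphism is an $\Alg\cC$-epimorphism). But the lemma asserts an epimorphism in $\Ind\cC$, i.e.\ surjectivity, and an epimorphism in $\Alg\cC$ is in general far from surjective (localisations $A\to A_f$ are epimorphisms). Your phrase ``reducing modulo the maximal ideal then gives surjectivity'' is exactly the unproved step; nothing in your argument upgrades the categorical epimorphism to a surjection. The missing ingredient --- and the place where \textbf{MN} and \textbf{GR} genuinely enter part (2) --- is the paper's final observation: $\cO_{Y,f(x)}/\fm$ is a \emph{field} (simple algebras are fields under these hypotheses), and an $\Alg\cC$-epimorphism out of a field onto a nonzero algebra is an isomorphism; hence $\cO_{Y,f(x)}/\fm\xrightarrow{\sim}\cO_{X,x}/\fm\cO_{X,x}$, and the composite $\cO_{Y,f(x)}\tto\cO_{Y,f(x)}/\fm\to\cO_{X,x}/\fm\cO_{X,x}$ is surjective. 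Without this field argument (or a substitute for it), your proof establishes only the weaker $\Alg\cC$-epimorphism statement and does not prove the lemma.
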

\begin{proof}
First we consider part (1). Since every simple algebra is a field, for every point $x\in X$ for a scheme $\bX=(X,\cO)$ there exists a morphism of schemes $\bSpec K\to \bX$, for a field extension $K/k$ (for instance the quotient of $\cO_x$ by its maximal ideal), which sends the unique point to $x$.
We can then use the proof of the corresponding statement in \cite[Lemme I.1.5.3]{DG}.

Part (2) reduces to the affine case, so $\bX=\bSpec A$, $\bY=\bSpec B$ and $f$ is $\bSpec$ of $\phi:B\to A$. Hence, $\phi$ is an epimorphism in $\Alg\cC$. The composite morphism in part (2) is then easily seen to be an epimorphism in $\Alg\cC$ as well. As it factors through the epimorphism (in $\Alg\cC$) $\cO_{Y,f(x)}/\fm\to \cO_{X,x}/\fm\cO_{X,x}$ and $\cO_{Y,f(x)}/\fm$ is a field, this latter epimorphism must be an isomorphism, concluding the proof.
\end{proof}

\subsection{Quasi-coherent sheaves}

\subsubsection{} Consider a morphism $\phi:\bX\to\bY$ of $\cC$-schemes. We use the same notation for the map of topological spaces $\phi:X\to Y$. For an $\bX$-module $\cF$, using $\cO_Y\to\phi_\ast\cO_X$, one can interpret~$\phi_\ast\cF$ as a $\bY$-module. Using part (1) of the following proposition, one verifies that~$\phi_\ast$ induces a functor
$$\phi_\ast:\;\QCoh_{\bC}\bX\to\QCoh_{\bC}\bY.$$

\begin{prop}\label{PropQCoh2}
Let $\bX$ be a $\cC$-scheme.
\begin{enumerate}
\item An $\bX$-module is quasi-coherent if and only if there exists an open affine cover such that for each $U\subset X$ in the cover and $f\in \cO_{\bX}(U)^0$, the restriction map yields an isomorphism $\cM(U)_f\to\cM(U_f)$.
\item If $\bX$ is affine with algebra of global sections $A$, then
$$\Gamma(X,-): \QCoh_{\bC}\bX\;\to\; \Mod_{\bC} A$$
is an equivalence.
\item $\QCoh_{\cC}\bX$ is an abelian subcategory of the category of all $\bX$-modules.
\end{enumerate}
\end{prop}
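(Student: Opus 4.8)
The plan is to bootstrap everything from the affine case established in Proposition~\ref{LemQCoh1}, together with the local characterisation of quasi-coherence. The three parts are logically intertwined, so I would prove (2) and (1) first and then deduce (3) from the affine equivalence. The main subtlety will be ensuring that the ``restriction commutes with localisation'' condition glues correctly across an affine cover; everything else is routine sheaf-theoretic bookkeeping.

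\emph{Part (1).} First I would observe that the condition is local on $\bX$, so it suffices to work on an affine open $U=\bSpec A_U$. For the forward direction, I would use that quasi-coherence gives, locally, a presentation $\cO|_V\otimes M_1\to\cO|_V\otimes M_0\to\cM|_V\to 0$, and that localisation at $f\in\cO_{\bX}(U)^0$ is exact (by exactness of localisation in $\Ind\cC$, as invoked in the proof of Proposition~\ref{LemQCoh1}(1)) and commutes with the free modules $\cO\otimes M_i$ via Lemma~\ref{LemXf}; hence it commutes with the cokernel $\cM$. The converse is precisely the content that a module satisfying the localisation-compatibility on an affine cover arises from its global sections on each affine piece via the tilde construction, which is exactly Proposition~\ref{LemQCoh1}(2) applied chart-by-chart. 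Here I must be slightly careful that $f$ ranges over $\cO_{\bX}(U)^0$ (not all of $\cO_{\bX}(U)$), so that the open sets $U_f$ form the basis $\{\Spec(A_U)_f\}$ of Hypothesis~\ref{hypo}(4); this is the only place the central hypothesis enters.

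\emph{Part (2).} For $\bX=\bSpec A$ affine, this is essentially Proposition~\ref{LemQCoh1}(2) transported from the classical setting. I would define the inverse to $\Gamma$ by $M\mapsto\widetilde{M}$ using the $B$-sheaf construction of~\ref{Bsheaf} on the basis $\{\Spec A_f\mid f\in A^0\}$, exactly as in the proof of Proposition~\ref{LemQCoh1}, with the equaliser/sheaf verification carried out verbatim (the argument there used only a finite generating set $\{f_a\}$ and exactness of direct limits, which are available here). Then $\Gamma(\widetilde{M})\simeq M$ holds by construction, and $\widetilde{\Gamma(\cM)}\simeq\cM$ follows from part~(1), since any quasi-coherent $\cM$ satisfies $\cM(\Spec A_f)\simeq\cM(X)_f=\Gamma(X,\cM)_f$, which is exactly the defining value of the tilde sheaf.

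\emph{Part (3).} I would argue that being an abelian subcategory is local and then reduce to the affine case, where by part~(2) the inclusion $\QCoh_{\cC}\bX\simeq\Mod_{\cC}A\hookrightarrow$ (all $\bX$-modules) corresponds to the sheafification functor $M\mapsto\widetilde{M}$, which is exact because localisation is exact: a short exact sequence of $A$-modules yields, after localisation at each $f\in A^0$, a short exact sequence, hence a short exact sequence of the associated sheaves computed on the basis. Thus kernels and cokernels of morphisms of quasi-coherent sheaves, computed in the category of all $\bX$-modules, are again quasi-coherent (one checks this stalk-locally or on an affine cover using part~(1)). The main obstacle I anticipate is the gluing step in the converse direction of part~(1): one must check that a family of compatible affine-local tilde sheaves, each isomorphic to the restriction of $\cM$, patches to give $\cM$ globally quasi-coherent, and that the localisation-compatibility condition is genuinely independent of the chosen affine cover. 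This is handled by the standard argument that quasi-coherence is a local property together with the canonical restriction isomorphisms $\cM(U)_f\simeq\cM(U_f)$ on overlaps.
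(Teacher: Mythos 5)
Your proposal is correct and follows essentially the same route as the paper: the paper's proof simply transports the argument of Proposition~\ref{LemQCoh1} (the tilde construction on the basis $\{\Spec A_f \mid f\in A^0\}$, with $\Gamma(\widetilde{M})\simeq M$ by construction and $\widetilde{\Gamma(\cM)}\simeq\cM$ from part~(1)) to the $\cC$-scheme setting, and for part~(3) uses exactly your observation that $M\mapsto\widetilde{M}$ is fully faithful and localisation is exact, so (co)kernels stay quasi-coherent, the general case being local. Your added care about $f$ ranging over $\cO_{\bX}(U)^0$ and Hypothesis~\ref{hypo}(4) supplying the basis is precisely the point that makes the transport work, so nothing is missing.
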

\begin{proof}
The same proof as for \autoref{LemQCoh1} applies to parts (1) and (2). Also part (3) is standard. Indeed, for the affine case, consider the inverse $M\mapsto \widetilde{M}$ of the equivalence in part~(2), where $\widetilde{M}(X_f)=M_f$. Since this functor is fully faithful and localisation exact, it follows that (co)kernels belong to $\QCoh\bX$. The general case follows then by the fact that exactness is defined locally.
\end{proof}

The notation $\cF/\cG$ for a quasi-coherent $\bX$-module $\cF$ with submodule $\cG$ on a $\cC$-scheme will thus refer to the cokernel in $\QCoh\cC$, or equivalently in the category of all $\bX$-modules, given by the sheafification of $U\mapsto \cF(U)/\cG(U)$.

\subsubsection{}\label{DefJV} Let $\bX=(X,\cO)$ be a $\bC$-scheme. Consider a fixed closed subset $V\subset X$. We define the subpresheaf $\cJ^V\subset\cO$ where, for each open $U\subset X$, $\cJ^V(U)$ is the intersection of the preimages of the maximal ideals for all $\cO(U)\to \cO_x$ with $x\in V\cap U$. In particular $\cJ^V(U)=\cO(U)$ if $U\cap V=\varnothing$.

\begin{prop}\label{PropSupp}
Let $\bX=(X,\cO)$ be a $\bC$-scheme.
\begin{enumerate}
\item For a quasi-coherent ideal sheaf $\cJ<\cO$, the subspace $\supp(\cO/\cJ)\subset X$ is closed.
\item For closed $V\subset X$, $\cJ^V$ is a quasi-coherent ideal sheaf in $\cO$, with $\supp (\cO/\cJ^V)=V$.
\item For closed $V\subset X$ and $f\in \Gamma(X,\cO)^0$, we have $X_f\cap V=\varnothing$ if and only if $f\in \Gamma(X,\cJ^V)$.
\end{enumerate}
\end{prop}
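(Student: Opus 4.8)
The plan is to prove all three parts by reducing to the affine case and invoking the localisation theory of \cite{ComAlg}; the only genuinely new work is the identification of the presheaf $\cJ^V$ of \ref{DefJV} with an honest quasi-coherent ideal.

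For part (1), I would use that being closed is local, together with $\supp(\cO/\cJ)\cap U=\supp((\cO/\cJ)|_U)$, to pass to an affine open $U=\bSpec A$. There the quasi-coherent ideal sheaf is $\cJ=\widetilde{I}$ for an ideal $I<A$, via the equivalence $\Gamma:\QCoh_\cC\bX\xrightarrow{\sim}\Mod_\cC A$ of Proposition~\ref{PropQCoh2}(2), and $\cO/\cJ\simeq\widetilde{A/I}$ by exactness of localisation, so $\supp(\cO/\cJ)=\{\fp\mid (A/I)_\fp\neq0\}$ with $(A/I)_\fp\simeq A_\fp/I_\fp$. By Lemma~\ref{LemfInv}(1), $(A/I)_\fp\neq0$ iff $I_\fp\subseteq\fp A_\fp$. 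If $\fp\in V(I)$, i.e.\ $I\subseteq\fp$, this is automatic; if $\fp\notin V(I)$, then Hypothesis~\ref{hypo}(2) gives $I^0\not\subseteq\fp^0$, so some $g\in I^0\setminus\fp^0$ becomes invertible in $A_\fp$ and forces $I_\fp=A_\fp$. Hence $\supp(\cO/\cJ)=V(I)$ is closed.

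For part (2), I would first settle the sheaf property by exhibiting $\cJ^V$ as a kernel: the assignment $s\mapsto(s_x\bmod\fm_x)_{x}$ defines a morphism of $\cC$-sheaves $\cO\to\prod_{x\in V}(\iota_x)_\ast\kappa(x)$, where $\kappa(x)=\cO_x/\fm_x$ and $\iota_x\colon\{x\}\hookrightarrow X$. Since the target is a sheaf (a product of pushforwards of sheaves, using that $\Ind\cC$ has products) and kernels of sheaf maps are computed sectionwise, $\cJ^V=\ker$ is a $\cC$-subsheaf of $\cO$, and it is an ideal sheaf because each $\cJ^V(U)$ is an ideal. To prove quasi-coherence and compute the support I would restrict to an affine $\bSpec A$, set $I:=\bigcap_{\fp\in V}\fp$, and note that $V(I)=V$ by an elementary containment argument. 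On a basic open $X_f$ the stalk is the localisation $\cO_x\simeq A_{\fp_x}$ of Theorem~\ref{ThmExistII}, whose maximal ideal pulls back along $\cO(X_f)=A_f\to\cO_x$ to the prime of $A_f$ corresponding to $x$ (\cite[Proposition~5.4.4]{ComAlg}); thus $\cJ^V(X_f)$ is the intersection of the primes of $A_f$ lying in $V\cap X_f$, which equals $I_f$ because $I$ is radical and the radical is computed as the intersection of the containing primes and commutes with localisation (facts from \cite{ComAlg}). Comparing on the basis $\{X_f\}$, which determines a sheaf by~\ref{Bsheaf}, yields $\cJ^V|_{\bSpec A}=\widetilde{I}$; quasi-coherence is then local, and $\supp(\cO/\cJ^V)=V$ follows from part~(1).

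Part (3) is essentially immediate from the definition of $\cJ^V$. An element $f\in\Gamma(X,\cO)^0$ lies in $\Gamma(X,\cJ^V)$ iff the composite $\unit\xrightarrow{f}\cO(X)\to\cO_x$ factors through $\fm_x$ for every $x\in V$, i.e.\ iff $f\in\fm_x^0$ for all such $x$, using Lemma~\ref{LemfInv}(2) to pass from $\cO_x$ to $\cO_x^0$. By the definition of $X_f$ in~\ref{DefXf} this says exactly $V\cap X_f=\varnothing$. I expect the main obstacle to be part (2): recognising that the presheaf $\cJ^V$ really is a quasi-coherent sheaf and that on affines it is the ideal of functions vanishing on $V$. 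The delicate point is the equality $\cJ^V(X_f)=I_f$, which rests on the tensor-categorical analogue of ``radical $=$ intersection of primes'' and its compatibility with localisation; by contrast the sheaf property is clean once $\cJ^V$ is presented as a kernel.
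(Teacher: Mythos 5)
Your proof is correct and follows essentially the same route as the paper's: reduce everything to the affine case $\bSpec A$, where part (1) computes the support of the quotient as $V(I)$ using Hypothesis~\ref{hypo}(2), part (2) identifies $\cJ^V$ with the sheaf of the radical ideal $I=\bigcap_{\fp\in V}\fp$ via exactness and compatibility of localisation (the paper's ``$\cJ^V(X)$ is the radical ideal corresponding to $V$''), and part (3) is read off directly from the definitions of $\cJ^V$ and $X_f$. The only difference is that you make explicit some steps the paper leaves implicit --- notably the kernel presentation $\cJ^V=\ker\bigl(\cO\to\prod_{x\in V}(\iota_x)_\ast\kappa(x)\bigr)$ establishing the sheaf property, and the verification $\cJ^V(X_f)=I_f$ needed for quasi-coherence --- which is elaboration rather than a different argument.
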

\begin{proof}
Part (1) can be reduced to the affine case where we observe that the quotient of the ideal sheaf associated to $I<A$ has support $V(AI^0)$.

Part (2) follows from a reduction to the affine case and exactness of localisation. Note that for $\bX=\bSpec A$, the ideal $\cJ^V(X)<A$ is the radical ideal corresponding to $V\subset \Spec A$.

Part (3) is immediate from the definitions of $\cJ^V$ and $X_f$, see~\ref{DefXf}.
\end{proof}

\begin{example}\label{ExNil}
If we take $V=X$ in part (2), then $\cJ^X(U)=\Nil\cO(U)$. Indeed, observe first that $\Nil\cO$ is a sheaf. The inclusion $\Nil\cO \subset \cJ^X$ is obvious. That the inclusion is an equality can be reduced to the affine case, where it is a consequence of \cite[Lemma~2.2.5]{ComAlg}.
\end{example}

\subsection{Subschemes}
Consider a $\bC$-scheme $\bX=(X,\cO)$.

\subsubsection{Open subschemes} It follows from Remark~\ref{SpecAf}(1) that for an open subspace $U\subset X$, the locally ringed space $\bU:=(U,\cO|_U)$ is again a scheme. These are the open subschemes of~$\bX$.


\subsubsection{Closed subschemes}
For a quasi-coherent ideal sheaf $\cJ<\cO$, let $Y=\supp(\cO/\cJ)$ be the closed subset of $X$ from Proposition~\ref{PropSupp}(1). 
Again by reducing to the affine case, it follows that $(Y,\cO/\cJ)$ is a $\bC$-scheme. These are the closed subschemes of $\bX$.

\begin{example}
\begin{enumerate}
\item Proposition~\ref{PropSupp}(2) allows us to associate a canonical closed subscheme to any closed subspace of $X$. In particular we define the closed subscheme
$$\bX_{\red}\subset \bX$$
corresponding to the ideal in Example~\ref{ExNil}.
\item For any $A\in\Alg\cC$ one verifies directly that $R(A_f)=R(A)_f$. It then follows that 
$$U\mapsto R(\cO(U))$$
is a quasi-coherent ideal sheaf (included in $\Nil\cO$ when $\cC$ is {\bf MN}). The corresponding closed subscheme is denoted by $\bX_0$, which is consistent with the notation of Lemma~\ref{LemX0}(2). Assuming $\cC$ is {\bf MN}, we have
$$\bX_{\red}\;\subset\; \bX_0\;\subset\;\bX.$$
\end{enumerate}
\end{example}

%

\begin{lemma}\label{LemAffNil}
Consider a nilpotent quasi-coherent ideal sheaf $\cJ<\cO$. If the closed subscheme of $\bX$ associated to $\cJ$ is affine, then so is $\bX$.
\end{lemma}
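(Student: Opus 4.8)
The plan is to reduce the problem to the affine case. Since affineness is detected by global data---specifically, by checking that the natural map $\bX\to\bSpec\Gamma(X,\cO)$ is an isomorphism---I would first set $A:=\Gamma(X,\cO)$ and study the comparison map $\theta:\bX\to\bSpec A$ coming from the adjunction $\Gamma\dashv\bSpec$ (Theorem~\ref{ThmExistII}). Let $\bX_0'=(Y,\cO/\cJ)$ denote the affine closed subscheme, so that $B:=\Gamma(Y,\cO/\cJ)\in\Alg\cC$ and $\bSpec B\simeq (Y,\cO/\cJ)$ by hypothesis. The global-sections functor applied to the short exact sequence $0\to\cJ\to\cO\to\cO/\cJ\to 0$ (whose exactness on global sections I would need to justify, since $\cJ$ is nilpotent and quasi-coherent, using that the obstructions live in higher cohomology which vanishes affine-locally) gives an exact sequence relating $A$, $\Gamma(X,\cJ)$, and $B$, exhibiting $B$ as a quotient of $A$ by the nilpotent ideal $\Gamma(X,\cJ)$.

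Next I would exploit the key topological fact that the closed immersion $Y\hookrightarrow X$ induced by a \emph{nilpotent} ideal sheaf is a \emph{homeomorphism}: nilpotents do not change the prime spectrum, so $V=\supp(\cO/\cJ)=X$ as sets (compare Example~\ref{ExNil}, where the nilradical cuts out all of $X$). Thus $Y=X$ topologically, and the two schemes $\bX$ and $\bX_0'$ share the same underlying space, differing only by the nilpotent thickening in the structure sheaf. This means $\bX$ is automatically quasi-compact (being homeomorphic to the affine, hence quasi-compact, space $\Spec B$) and that the affine criterion becomes purely a question about whether $\cO$ itself, rather than $\cO/\cJ$, is the structure sheaf of an affine scheme on this fixed space.

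I would then run the standard affineness criterion: choose a finite affine open cover $\{X_{f_i}\}$ of $\bX$ with $f_i\in A^0$, which exists because $X$ is quasi-compact and the basic opens $\{\bSpec A_f\}$ form a basis (Hypothesis~\ref{hypo} and Remark~\ref{SpecAf}(1)). The hypothesis that $\bX_0'$ is affine, together with Proposition~\ref{PropQCoh2}(2) identifying quasi-coherent sheaves with modules, lets me conclude that the images $\bar f_i$ of the $f_i$ in $B^0$ generate $B$ as an ideal and that the cover is ``affine'' after reduction. The plan is to lift this: since $\Gamma(X,\cJ)=\ker(A\to B)$ is nilpotent, an element of $A^0$ is invertible if and only if its reduction in $B^0$ is invertible, so the $f_i$ already generate $A$ (no maximal ideal of $A$ contains all of them, by Lemma~\ref{LemfInv}(1) applied after reduction), and each $\bX_{f_i}=\bSpec A_{f_i}$ is affine by Lemma~\ref{LemXf} together with $R(A_f)=R(A)_f$ type compatibility. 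Having a finite affine cover by distinguished opens $\bSpec A_{f_i}$ with the $f_i$ generating $A$ is precisely the classical sufficient condition (Serre's criterion) for $\bX$ to be affine, and I would verify that its proof carries over: one checks that $\theta^\sharp:A_{f_i}\to\cO(X_{f_i})$ is an isomorphism on each chart and glues.

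The main obstacle I anticipate is the gluing/descent step---proving that the comparison map $\theta:\bX\to\bSpec A$ is an isomorphism of locally ringed spaces in $\cC$, not merely chart-by-chart. In the classical setting this rests on the exactness of the \v{C}ech complex for quasi-coherent sheaves on affine patches (Serre's affineness criterion), and the delicate point here is that I must run this argument internally to the tensor category $\Ind\cC$, where I can only invoke the localisation and sheaf machinery developed in the excerpt (Proposition~\ref{PropQCoh2}, Corollary~\ref{CorBsh}, and the equaliser computation in the proof of Proposition~\ref{LemQCoh1}). Controlling the interaction between the nilpotent filtration by powers of $\cJ$ and these gluing equalisers---so that an isomorphism modulo $\cJ$ (the affineness of $\bX_0'$) bootstraps to an isomorphism over $\cO$ by filtering through $\cJ^n/\cJ^{n+1}$, each a quasi-coherent $\cO/\cJ$-module on the affine scheme $\bX_0'$---is where the real work lies; everything else is formal manipulation of the adjunctions already established.
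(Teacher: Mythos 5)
The paper's own proof is a single sentence: the classical argument of \cite[I.\S 2.6.1]{DG} carries over. Your plan follows the same classical outline (same underlying space, nilpotent filtration, finite cover by basic opens, standard affineness criterion), but it is circular at the decisive step. You produce your finite affine cover $\{X_{f_i}\}$, $f_i\in A^0$, by asserting that the basic opens $\{X_f\mid f\in A^0\}$ form a basis of $X$, citing Hypothesis~\ref{hypo} and Remark~\ref{SpecAf}(1). Both of those statements concern \emph{affine} schemes, i.e.\ the spectrum of an algebra; for the scheme $\bX$ whose affineness is in question, the property that the $X_f$ with $f$ a \emph{global} section form a basis is precisely condition (5) of Proposition~\ref{PropQA}, hence already equivalent to $\bX\to\bSpec A$ being an open immersion --- essentially the conclusion. (Likewise ``$\bX_{f_i}=\bSpec A_{f_i}$'' does not follow from Lemma~\ref{LemXf}, which only computes $\Gamma(X_{f_i},\cO)$: classically, for $\bX=\mP^2$ every global $f$ is a scalar, so $X_f$ is $X$ or $\varnothing$.) In the argument of \cite{DG}, this basis property is \emph{deduced} from the affineness of the closed subscheme: writing $\bX_0'=(Y,\cO/\cJ)=\bSpec B$ as you do, given $x\in U\subset X$ with $U$ affine open one picks $\bar g\in B^0$ with $x\in Y_{\bar g}\subset U\cap Y$ (here Hypothesis~\ref{hypo} applies legitimately, to $\bSpec B$), lifts $\bar g$ along $A\tto B$ (surjective once $H^1(X,\cJ)=0$ is known, via your filtration by $\cJ^n/\cJ^{n+1}$ and a \v{C}ech vanishing on affines extending the computation in Proposition~\ref{LemQCoh1}), and then $X_f=Y_{\bar g}\subset U$ equals the affine basic open $U_{f|_U}$ by Lemma~\ref{LemfInv}(1). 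Your proposal never performs this lifting: you use the affineness of $\bX_0'$ only to push the $f_i$ \emph{down} (to see that the $\bar f_i$ generate $B$), which is the easy direction and does not consume the hypothesis.

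A second point any corrected version must face, and which the paper's word ``verbatim'' hides, is that elements and basic opens live at the level of $H^0=\Hom(\unit,-)$, which is only left exact. So even after $A\tto B$ is shown to be an epimorphism in $\Ind\cC$, an element of $B^0$ need not lift to $A^0$; the obstruction lies in $\Ext^1_{\Ind\cC}(\unit,\Gamma(X,\cJ))$. When $\cC$ is {\bf GR} this can be repaired by the geometric-reductivity trick: let $W\subset A$ be the preimage of the image of $\bar g:\unit\to B$, choose a finite-length $W'\subset W$ still mapping onto $\unit$, split $\Sym^nW'\tto\unit$, and multiply into $A$ to get $f\in A^0$ lifting $\bar g^{\,n}$; since $Y_{\bar g}=Y_{\bar g^{\,n}}$ this suffices. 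Finally, the step you single out as the ``main obstacle'' --- that a finite cover by affine $X_{f_i}$ with the $f_i$ generating the unit ideal forces $\bX\simeq\bSpec A$ --- is in fact the routine part, being the same equaliser computation as in Proposition~\ref{LemQCoh1}; the real content of the lemma is the construction of that cover, which is exactly where your argument has the gap.
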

\begin{proof}
The proof of \cite[I.\S 2.6.1]{DG} carries over verbatim.
\end{proof}

\begin{definition}An {\bf open} (resp. {\bf closed}) {\bf immersion} of $\bC$-schemes is a morphism $\bY\to\bX$ which restricts to an isomorphism between $\bY$ and an open (resp. closed) subscheme of $\bX$.
\end{definition}

\begin{example}
For $x\in \bX(\unit)$, the corresponding $\Spec \unit\to\bX$ is a closed immersion.
\end{example}

\begin{definition}
 An immersion is a morphism $\bZ\to\bX$ of $\cC$-schemes which is a composition
$$\bZ\xrightarrow{\mbox{closed immersion}}\bY\xrightarrow{\mbox{open immersion}}\bX.$$
\end{definition}

\begin{lemma}\label{LemImmUnion}
Consider a morphism $f:\bX\to \bY$ of $\cC$-schemes and a collection of open subsets $\{B_i\subset Y\mid i\}$, for which we set $U_i:=f^{-1}(B_i)\subset X$. If $X=\cup_iU_i$ and each $\bX|_{U_i}\to \bY|_{B_i}$ is an immersion, then $f$ is an immersion.
\end{lemma}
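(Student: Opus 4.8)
The plan is to prove that being an immersion is local on the target, by exhibiting an open subscheme $\bY|_W\subset\bY$ through which $f$ factors and for which the factorization $g:\bX\to\bY|_W$ is a closed immersion; the composite $\bX\xrightarrow{g}\bY|_W\hookrightarrow\bY$ is then an immersion by definition. For each $i$, since $\bX|_{U_i}\to\bY|_{B_i}$ is an immersion it factors as a closed immersion $\bX|_{U_i}\to\bY|_{W_i}$ into an open subscheme $\bY|_{W_i}$ with $W_i\subset B_i$ open, hence $W_i\subset Y$ open. Set $W=\bigcup_i W_i$, so that $\bY|_W$ is an open subscheme of $\bY$.

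First I would settle the topological bookkeeping. Because $\bX|_{U_i}\to\bY|_{W_i}$ is a closed immersion, its underlying map sends all of $U_i$ into $W_i$, so $f(U_i)\subset W_i$; together with $f^{-1}(W_i)\subset f^{-1}(B_i)=U_i$ this gives $f^{-1}(W_i)=U_i$. In particular $f(X)=\bigcup_i f(U_i)\subset W$, so $f$ factors through the open subscheme $\bY|_W$ as a morphism $g:\bX\to\bY|_W$ of locally ringed spaces, with $g^{-1}(W_i)=U_i$ and $g|_{U_i}$ equal to the closed immersion $\bX|_{U_i}\to\bY|_{W_i}$. Moreover $f(X)\cap W_i=f(U_i)$ is closed in $W_i$, and since closedness is local, $g(X)=f(X)$ is closed in $W$.

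It remains to show that $g$ is a closed immersion, which amounts to the claim that being a closed immersion is local on the target. Let $\cJ:=\ker\big(g^\sharp:\cO_Y|_W\to g_\ast\cO_X\big)$, an ideal subsheaf of $\cO_Y|_W$. For each $i$, the hypothesis that $g|_{U_i}$ is a closed immersion says precisely that $\cJ|_{W_i}$ is a quasi-coherent ideal sheaf and that $\cO_Y|_{W_i}/\cJ|_{W_i}\to (g|_{U_i})_\ast(\cO_X|_{U_i})$ is an isomorphism identifying $\bX|_{U_i}$ with the closed subscheme $(f(U_i),\cO_Y|_{W_i}/\cJ|_{W_i})$. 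Quasi-coherence is a local condition by Proposition~\ref{PropQCoh2}(1), so $\cJ$ is a quasi-coherent ideal sheaf on $W$; and since the displayed isomorphisms are restrictions of the single global map $\cO_Y|_W/\cJ\to g_\ast\cO_X$ induced by $g^\sharp$, they glue without any cocycle check to an isomorphism of $\bX$ with the closed subscheme of $\bY|_W$ cut out by $\cJ$, whose support is $g(X)=f(X)$. Hence $g$ is a closed immersion and $f$ is an immersion.

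The main obstacle is this last step --- verifying that the local closed-subscheme structures glue to a global one. The essential points that make it work are the local characterization of quasi-coherent ideal sheaves from Proposition~\ref{PropQCoh2} and the observation that all the local data are restrictions of the single kernel sheaf $\cJ$, so no gluing morphisms need to be constructed by hand. The one genuinely geometric input is the equality $f^{-1}(W_i)=U_i$: that the closed immersion on $U_i$ forces the whole image of $U_i$ into $W_i$, which is what makes the open cover $\{W_i\}$ of $W$ pull back exactly to the cover $\{U_i\}$ of $X$.
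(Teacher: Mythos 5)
Your proof is correct and takes essentially the same approach as the paper: shrink each $B_i$ to an open $W_i$ (the paper's $C_i$) over which the restriction becomes a closed immersion, replace $\bY$ by the union of these opens, and then show closed immersions are local on the target by observing that the kernel of $\cO_Y\to f_\ast\cO_X$ is quasi-coherent and the induced map onto the pushforward is locally, hence globally, an isomorphism. The only difference is one of detail: the paper delegates this last gluing step to \cite[I.2.4.9]{DG}, while you write it out explicitly (including the key bookkeeping $f^{-1}(W_i)=U_i$).
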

\begin{proof}
By assumption, there exist open $C_i\subset B_i$ such that $f(U_i)\subset C_i$ and $\bX|_{U_i}\to \bY|_{C_i}$ is a closed immersion. We can thus replace $B_i$ by $C_i$ and henceforth only consider closed immersions. We can subsequently also replace $Y$ by $\cup_i B_i$. 

Now it remains to be proved that for an open cover $\{B_i\subset Y\}$ for which each $\bX|_{U_i}\to \bY|_{B_i}$ is a closed immersion, it follows that $f$ is a closed immersion. This follows easily, see also \cite[I.2.4.9]{DG}. Indeed, we can quickly observe that the kernel of $\cO_Y\to f_\ast\cO_X$ is quasi-coherent and that the morphism is an epimorphism.
\end{proof}

\subsection{Quasi-affine schemes}

As usual, we call a scheme {\bf quasi-compact} if its underlying space is quasi-compact.

 \begin{definition}
 A scheme $\bX$ is {\bf quasi-affine} if it is isomorphic to an open quasi-compact subscheme of an affine scheme.
\end{definition}

\begin{prop}\label{PropQA}
For a quasi-compact scheme $\bX=(X,\cO)$, set $A=\Gamma(X,\cO)$. The following are equivalent:
\begin{enumerate}
\item $\bX$ is quasi-affine.
\item The canonical morphism
$\bX\to\bSpec A$
is an open immersion.
\item Taking global sections yields a faithful functor
$$\Gamma(X,-):\;\QCoh_{\cC}\bX\to \Mod_{\cC}A.$$
\item There exists an algebra $R\in\Alg\cC$ and an immersion $\phi:\bX\to\bSpec R$.
\item The set $\{X_f,f\in A^0\}$ of open subsets of $X$, see~\ref{DefsLRS}, is a basis for $X$.
\end{enumerate}
\end{prop}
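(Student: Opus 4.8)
The plan is to prove the five conditions equivalent by establishing a cycle of implications, following the classical pattern for quasi-affine schemes (as in \cite[I.\S 3]{DG}) but carefully respecting the fact that we only have access to elements $f\in A^0$ rather than arbitrary elements of $A$. I would organise the argument as $(2)\Rightarrow(1)\Rightarrow(5)\Rightarrow(2)$ for the first triangle, then $(2)\Rightarrow(4)$ (trivial, take $R=A$) and $(4)\Rightarrow(3)$ and $(3)\Rightarrow(5)$ to close the remaining conditions into the cycle.

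\medskip
First I would address $(2)\Rightarrow(1)$, which is immediate since an open immersion into $\bSpec A$ exhibits $\bX$ as an open subscheme of an affine scheme, and $\bX$ is quasi-compact by hypothesis. For $(1)\Rightarrow(5)$, suppose $\bX$ is an open quasi-compact subscheme of $\bSpec R$ for some $R\in\Alg\cC$. The sets $(\Spec R)_g$ for $g\in R^0$ form a basis of $\Spec R$ by Hypothesis~\ref{hypo}(4), so their intersections with $X$ form a basis for $X$; by quasi-compactness $X$ is covered by finitely many such. The key point is then to promote these $(\Spec R)_g\cap X$ to sets of the form $X_f$ with $f\in A^0=\Gamma(X,\cO)^0$: the restriction of $g$ lands in $A^0$, and I would use Lemma~\ref{LemXf} together with \ref{DefXf} to identify $X_f$ with $(\Spec R)_g\cap X$, so that $\{X_f\mid f\in A^0\}$ refines to a basis. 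The harder direction is $(5)\Rightarrow(2)$: given that the $X_f$ form a basis, I would build the morphism $\bX\to\bSpec A$ as the unit of the adjunction $\Gamma\dashv\bSpec$ from Theorem~\ref{ThmExistII}, then show it is an open immersion. Locally, $\bX_f\to \bSpec A_f$ is $\bSpec$ of $A_f\xrightarrow{\sim}\cO(X_f)$ (Lemma~\ref{LemXf}), hence an isomorphism onto the open subscheme $(\bSpec A)_f$ via Remark~\ref{SpecAf}(1); since the $X_f$ cover $X$ and form a basis, these local isomorphisms glue to an open immersion, invoking Lemma~\ref{LemImmUnion} to assemble the local immersions into a global one.

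\medskip
For the remaining conditions, $(2)\Rightarrow(4)$ is trivial (an open immersion is in particular an immersion, with $R=A$). For $(4)\Rightarrow(3)$, given an immersion $\phi:\bX\to\bSpec R$, faithfulness of $\Gamma(X,-)$ on $\QCoh_\cC\bX$ would follow by pushing quasi-coherent sheaves forward along the immersion and using Proposition~\ref{PropQCoh2}(2) on the affine target, together with the fact that $\phi_\ast$ reflects the zero sheaf on an immersion since $\phi$ restricts to an isomorphism onto a subscheme. Finally $(3)\Rightarrow(5)$: I would argue by contraposition, showing that if the $X_f$ do not form a basis, there is an open set $U$ and a point $x\in U$ with no $X_f\subset U$ containing $x$, from which one constructs a nonzero quasi-coherent sheaf (for instance a suitable ideal sheaf $\cJ^V$ from \ref{DefJV}, or the quotient $\cO/\cJ^V$ for $V=X\setminus U$) whose global sections vanish, contradicting faithfulness; here Proposition~\ref{PropSupp}(3) relating $X_f\cap V=\varnothing$ to $f\in\Gamma(X,\cJ^V)$ is the essential bookkeeping tool.

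\medskip
The main obstacle I anticipate is the step $(5)\Rightarrow(2)$, specifically verifying that the locally defined isomorphisms $\bX_f\xrightarrow{\sim}(\bSpec A)_f$ are mutually compatible and assemble into a genuine open immersion rather than merely a local isomorphism onto an image. The subtlety is that being a basis (as opposed to merely a cover) of $X$ by the $X_f$ is exactly what guarantees that the image of $\bX$ in $\bSpec A$ is open and that the comparison map is injective on points; without the basis property one would only obtain a local isomorphism onto a possibly non-open locally closed image. I would lean on Lemma~\ref{LemXf}, which furnishes the crucial isomorphism $A_f\xrightarrow{\sim}\cO(X_f)$ valid for \emph{any} $\cC$-scheme (not just affine ones), to make the local identifications canonical, and on Lemma~\ref{LemImmUnion} to globalise; the remaining work is the routine but delicate checking of injectivity of the underlying continuous map, which reduces to the observation that distinct points of $X$ are separated by some $X_f$ precisely because the $X_f$ form a basis.
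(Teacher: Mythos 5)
Your implication structure is valid and several steps ((2)$\Rightarrow$(1), (1)$\Rightarrow$(5), (2)$\Rightarrow$(4), (4)$\Rightarrow$(3)) are fine, but the step (5)$\Rightarrow$(2) has a genuine gap. You claim that Lemma~\ref{LemXf} makes $\bX_f\to\bSpec A_f$ an isomorphism because $A_f\xrightarrow{\sim}\cO(X_f)$. This conflates an isomorphism on global sections with an isomorphism of schemes: the unit map $\bX_f\to\bSpec\cO(X_f)$ is an isomorphism only if $\bX_f$ is \emph{affine}, which is not given. Taking $f=1$ already shows the problem: $A_1\simeq\cO(X_1)=\cO(X)$ always holds, yet $\bX\to\bSpec A$ is not an isomorphism for a quasi-affine non-affine $\bX$ (the classical punctured plane, say). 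The paper closes this gap by an extra idea you are missing: it restricts to the set $S\subset A^0$ of those $f$ for which $\bX_f$ \emph{is} affine, and shows that $\{X_f\mid f\in S\}$ is still a basis — because if $X_f$ lies inside an affine open $U$ then $X_f=U_{f|_U}$, a basic open of an affine scheme, hence affine. Only for such $f$ does Lemma~\ref{LemXf} yield $\bX_f\simeq\bSpec A_f$, and then the maps $\bX_f\to\bX\to\bSpec A$ are open immersions that cover $\bX$, giving (2). Without passing to $S$, your ``local isomorphisms'' simply do not exist.

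Your (3)$\Rightarrow$(5) is also not correct as stated: neither $\cJ^V$ nor $\cO/\cJ^V$ has vanishing global sections in general (e.g.\ $\Gamma(X,\cJ^V)$ contains all functions vanishing on $V$), so no contradiction with faithfulness arises from those sheaves directly. The paper instead argues positively: faithfulness of $\Gamma$ plus \eqref{GlobSec} applied to a generator of $\Ind\cC$ shows that every quasi-coherent sheaf is generated by its global sections (an epimorphism onto each stalk); applied to $\cJ^V$ at a point $x\notin V$, where $\cJ^V_x=\cO_x$, this says $J:=\Gamma(X,\cJ^V)$ is not contained in the preimage $\fm<A$ of the maximal ideal of $\cO_x$, and then — crucially — Hypothesis~\ref{hypo}(2) converts this into the existence of $f\in J^0$ with $f\notin\fm$, so $x\in X_f\subset U$ by Proposition~\ref{PropSupp}(3). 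Note that Hypothesis~\ref{hypo}(2), which you never invoke, is indispensable here: it is the only bridge between ideals of $A$ and elements of $A^0$, and some such bridge is unavoidable since condition (5) is phrased entirely in terms of $A^0$. (Your contrapositive could be patched — quotient $\cJ^V$ by the subsheaf generated by global sections and observe the quotient map is nonzero at $x$ yet vanishes on global sections — but this again requires Hypothesis~\ref{hypo}(2) to see the quotient is nonzero, and it is not what you wrote.)
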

\begin{proof}
By definition, (2) implies (1) and (1) implies (4). To prove that (4) implies (3), we observe that, as for any immersion, $\phi_\ast$ is faithful. By standard properties of adjunction, $\phi$ factors through $\bSpec A$. Using the equivalence in Proposition~\ref{PropQCoh2}(2) then yields a commutative square
$$\xymatrix{
\QCoh_{\cC} \bX\ar[rr]^{\Gamma}\ar[d]_{\phi_\ast}&& \Mod_{\cC} A\ar[d]_{\Res}\\
\QCoh_{\cC}\bSpec R\ar[rr]^{\sim}&&\Mod_{\cC} R.
}$$
It follows that $\Gamma$ is faithful.

Now we prove that (3) implies (5). By equation~\eqref{GlobSec}, applied to the case where $N$ is a generator $G$ of $\Ind\cC$, it follows from faithfulness of $\Gamma(X,-)$ that $\cO\otimes G$ is a generator of $\QCoh_{\cC}\bX$. Consequently, given $\cM$ in $\QCoh_{\cC}\bX$, there exists some $N\in\Ind\cC$ with an epimorphism $\cO\otimes N\tto\cM$. As this epimorphism, again by \eqref{GlobSec}, factors via $\cO\otimes \Gamma(X,\cM)$ we find that for
every $x\in X$, the canonical morphism
$$\cO_{x}\otimes \Gamma(X,\cM)\;\to\; \cM_x$$
is an epimorphism.

Take an arbitrary $x\in X$ and open neighbourhood $U\ni x$ with complement $V=X\backslash U$. Consider the quasi-coherent ideal sheaf $\cJ^V<\cO$ from \ref{DefJV}. By the above paragraph and $\cJ^V_x=\cO_{x}$, we find
$$\cO_{x}\otimes \Gamma(X,\cJ^V)\;\tto\; \cO_{x}.$$
In other words, the ideal $J:=\Gamma(X,\cJ^V)$ in $A$ is not contained in the inverse image $\fm<A$ of the maximal ideal in $\cO_{x}$. By \ref{hypo}(2), this implies that $J^0$ is not contained in $\fm$. In other words, there exists
 $f\in \Gamma(X,\cJ^V)^0$ which is not mapped to the maximal ideal of $\cO_{x}$. By definition $x\in X_f$, and by Proposition~\ref{PropSupp}(3), $X_f\subset U$, so (5) follows.

Now we prove that (5) implies (2). Consider the set $S\subset A^0$ of those $f$ for which $\bX_f$ is affine. Then also $\{X_f,f\in S\}$ forms a basis for $X$. Indeed, for every $x\in X$ and affine open $U\ni x$, there exists $x\in X_f\subset U$. But then $X_f=U_f$, which is also affine.
Lemma~\ref{LemXf} shows that, for $f\in S$, $\bX_f$ is canonically isomorphic to $\bSpec A_f$, which implies
 that the morphism of affine schemes $\bX_f\to\bX\to\Spec A$ is an open immersion. This implies (2).
 \end{proof}


\begin{remark}
Assume that $\rho_A$ is a homeomorphism for all $A\in\Alg\cC$. It follows from \autoref{RemSpecA0}(2) that $\bX^0$ is a $k$-scheme, for a $\cC$-scheme $\bX$. Moreover, the condition in \autoref{PropQA}(5) clearly being equivalent for $\bX$ and $\bX^0$ then implies that $\bX$ is quasi-affine if and only if $\bX^0$ is quasi-affine.
\end{remark}

\subsection{Functor of points}
We consider the restriction to $\Sch\cC$ of functor $h$ from~\eqref{eqFOP}:
\begin{equation}
\label{eqh}
\Sch\cC\;\to\;\Fun\cC,\quad \bX\mapsto h_{\bX}=\Sch\cC(\bSpec-,\bX).
\end{equation}

\begin{theorem}\label{CompThm}
\begin{enumerate}
\item The functor \eqref{eqh} is fully faithful. Consequently, the canonical morphism $|h_{\bX}|\to\bX$ is an isomorphism for every $\bX\in\Sch\cC$.
\item The functor \eqref{eqh} yields an equivalence between $\Sch\cC$ and the full subcategory of $F\in \Fais_z\cC$ which admit an open cover by representable functors as in \ref{DefOC}.
\item If $\cC$ is {\bf MN} and {\bf GR}, one can replace $\Fais_z\cC$ by $\Fais\cC$ in part (2).
\end{enumerate}
\end{theorem}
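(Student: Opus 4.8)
The plan is to deduce everything from two inputs: the Yoneda lemma in the affine case, and the fact (Lemma~\ref{LemFaisz}) that $h$ lands in $\Fais_z\cC$, so that both source and target of a natural transformation can be reconstructed from open covers by representables.

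\textbf{Part (1).} First I would treat the case where the \emph{source} is affine. For $\bU=\bSpec A$ and any $\bY\in\Sch\cC$ one has $h_{\bU}\simeq\Phi_A$, so Yoneda gives $\Nat(h_{\bU},h_{\bY})\simeq h_{\bY}(A)=\Sch\cC(\bSpec A,\bY)$, and a direct check identifies this bijection with $g\mapsto h_g$. Thus \eqref{eqh} is fully faithful whenever the source is affine. For general $\bX$, fix an affine open cover $\{\bU_i=\bSpec A_i\}$. The subfunctors $h_{\bU_i}\subset h_{\bX}$ are open (the pullback along any $\alpha:\bSpec A\to\bX$ is the $\Phi_A^{I_i}$ cutting out the open $\alpha^{-1}(\bU_i)\subset\Spec A$, as in~\ref{OpenFun} and Example~\ref{ExFunP}), and they form an open cover in the sense of~\ref{DefOC}, since $\bigcup_i\alpha^{-1}(\bU_i)=\Spec A$ forces $\sum_iI_i=A$. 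Because $h_{\bY}\in\Fais_z\cC$ and $\{h_{\bU_i}\}$ is a Zariski cover of $h_{\bX}$ (Proposition~\ref{PropCover}), a natural transformation $h_{\bX}\to h_{\bY}$ is the same datum as a family $h_{\bU_i}\to h_{\bY}$ agreeing on the intersections $h_{\bU_i}\cap h_{\bU_j}=h_{\bU_i\cap\bU_j}$; applying the affine case to each $\bU_i$ and to affine opens covering $\bU_i\cap\bU_j$, such a family corresponds to morphisms $\bU_i\to\bY$ that glue to a unique $\bX\to\bY$. This yields full faithfulness. The isomorphism $|h_{\bX}|\to\bX$ then follows from the adjunction $|\cdot|\dashv h$ and Lemma~\ref{LemFaisz}(2): since $|\cdot|$ preserves colimits and sheafification, $|h_{\bX}|$ is the gluing of the $|h_{\bU_i}|=|\Phi_{A_i}|=\bU_i$ along the $\bU_i\cap\bU_j$, which is exactly $\bX$.

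\textbf{Part (2).} By Lemma~\ref{LemFaisz}(1) the functor lands in $\Fais_z\cC$, and the previous paragraph exhibits $\{h_{\bU_i}=\Phi_{A_i}\}$ as an open cover of $h_{\bX}$ by representables, so the essential image is contained in the stated subcategory. For the converse, given $F\in\Fais_z\cC$ with an open cover by representable subfunctors $\{\Phi_{A_i}\subset F\}$, I would set $\bX:=|F|$. By Lemma~\ref{LemOpSubF} each $\Phi_{A_i}$ corresponds to an open subset $W_i\subset|F|$, and the explicit colimit description of $|\cdot|$ identifies the open subspace $(|F|)|_{W_i}$ with $|\Phi_{A_i}|=\bSpec A_i$; as the $\Phi_{A_i}$ cover $F$, the $W_i$ cover $|F|$, so $|F|\in\Sch\cC$. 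It remains to see that the unit $\eta_F:F\to h_{|F|}$ is an isomorphism. Both $F$ and $h_{|F|}$ are objects of $\Fais_z\cC$ carrying the \emph{common} open cover $\{\Phi_{A_i}\}$ (for $h_{|F|}$ via the affine cover $\{W_i\}$ and Part (1)), and by construction $\eta_F$ restricts to the canonical isomorphism $\Phi_{A_i}\xrightarrow{\sim}h_{\bU_i}$ on each member and on each intersection; since a $\Fais_z$-sheaf is the colimit of such a cover, $\eta_F$ is an isomorphism. Together with Part (1) this gives the asserted equivalence.

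\textbf{Part (3) and the main obstacle.} When $\cC$ is \textbf{MN} and \textbf{GR}, Lemma~\ref{LemFais}(1) improves Lemma~\ref{LemFaisz}(1): $h$ now lands in $\Fais\cC\subset\Fais_z\cC$. Hence every object of the subcategory from Part (2), being isomorphic to some $h_{\bX}$, already lies in $\Fais\cC$, so the subcategory is unchanged if one requires $F\in\Fais\cC$; rerunning Parts (1) and (2) with Lemma~\ref{LemFais} in place of Lemma~\ref{LemFaisz} (in particular $\eta_F$ is again an isomorphism of fpqc faisceaux) yields the same equivalence with $\Fais\cC$. The step I expect to be genuinely delicate is the essential-surjectivity part of Part (2): verifying that the geometric realisation of an open subfunctor $\Phi_{A_i}\subset F$ is honestly the open subscheme $\bSpec A_i\hookrightarrow|F|$ (so that $|F|$ is locally affine), and that the single cover $\{\Phi_{A_i}\}$ presents $F$ and $h_{|F|}$ as the \emph{same} colimit in $\Fais_z\cC$. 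This rests on Lemma~\ref{LemOpSubF}, the colimit formula for $|\cdot|$, and the Zariski descent encoded in Proposition~\ref{PropCover}; everything else is either the Yoneda lemma or a formal consequence of $|\cdot|\dashv h$.
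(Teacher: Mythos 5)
Your proposal is correct, but for the essential surjectivity in Part (2) it takes a genuinely different route from the paper. Parts (1) and (3) match the paper's proof in substance: Part (1) is exactly the Eisenbud--Harris argument the paper cites (Yoneda for affine source, then gluing of morphisms along an affine open cover, with Proposition~\ref{PropCover}(2) --- hence Hypothesis~\ref{hypo} --- supplying the Zariski-local presentation of $h_{\bX}$), and Part (3) is the same appeal to Lemma~\ref{LemFais}(1). For the converse direction of Part (2), however, the paper never invokes $|\cdot|$ or Lemma~\ref{LemOpSubF}: from a cover $\{\Phi_{R_i}\subset F\}$ it extracts transition isomorphisms $\Phi_{R_i}^{I_{ij}}\simeq\Phi_{R_j}^{I_{ji}}$, converts them into isomorphisms of open subschemes via Part (1) and Example~\ref{ExFunP}, glues the $\bSpec R_i$ by hand into a scheme $\bX$, and then checks that $F\to h_{\bX}$ is an isomorphism using that $F\in\Fais_z\cC$. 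You instead set $\bX:=|F|$ and identify its open pieces --- the Demazure--Gabriel approach. Your route buys canonicity (no cocycle bookkeeping, and the isomorphism $|F|\simeq\bX$ comes built in rather than a posteriori); the paper's route is more elementary and entirely avoids the commutation facts you rightly flag as delicate. Those facts do go through: $\Phi_{A_i}=\colim_{a}(\Phi_{A_i}\times_F\Phi_{A_a})$ because colimits in presheaf categories are universal (stable under pullback); each $|\Phi_{A_i}\times_F\Phi_{A_a}|$ is the corresponding open subscheme of $\bSpec A_a$ by Example~\ref{ExFunP} combined with the counit isomorphism from Part (1) applied to open subschemes of affines; and restriction to an open subset commutes with the colimits constructed in Lemma~\ref{cocomplete}. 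One gloss does need repair: in Part (1) you compute $|h_{\bX}|$ as the coequaliser of $\bigsqcup |h_{\bU_i\cap\bU_j}|\rightrightarrows\bigsqcup |h_{\bU_i}|$ and use $|h_{\bU_i\cap\bU_j}|\simeq\bU_i\cap\bU_j$, which is an instance of the very statement being proved (the intersections need not be affine). Either refine the \v{C}ech diagram by affine opens covering the intersections, so that all terms are representable and only $|\Phi_A|=\bSpec A$ is needed, or observe that the Eisenbud--Harris argument proves full faithfulness against arbitrary targets $\bY\in\LRS\cC$, whence the counit isomorphism follows formally from the adjunction $|\cdot|\dashv h$.
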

\begin{proof}
Fully faithfulness follows precisely as in \cite[Proposition VI-2]{EH}.

Now we prove part (2). That $h_{\bX}$ is in $\Fais_z\cC$ is observed in Lemma~\ref{LemFaisz}(1).
Furthermore, an open affine cover of $\bX$ yields an open cover of $h_{\bX}$. Indeed, consider an open affine $\bSpec A\subset X$. For any element $\bSpec B\to \bX$ of $h_{\bX}(B)$, we have a radical ideal $I<B$ associated to the inverse image in $\Spec B$ of $\Spec A$. It follows easily that $\Phi_A\times_{h_{\bX}}\Phi_B=\Phi_B^I$, showing that $\Phi_A\to h_{\bX}$ is an open subfunctor. The analysis in~\ref{DefOC} shows that in this way an open (affine) cover of a scheme leads to an open (representable) cover of the associated functor.

To prove the other direction, consider $F\in \Fun\cC$ with an open cover $\{\Phi_{R_i}\hookrightarrow F\}$. Applying the definition of open subfunctors to each pair $\Phi_{R_i}\to F\leftarrow \Phi_{R_j}$ yields ideals $I_{ij}<R_i$ for which
$$\Phi^{I_{ij}}_{R_i}\;\simeq\; \Phi^{I_{ji}}_{R_j}.$$
Applying part (1) and Example~\ref{ExFunP} yields isomorphisms
$$\bSpec R_i|_{U_{ij}}\;\simeq\;\bSpec R_j|_{U_{ji}} $$
where $U_{ij}\subset \Spec R_i$ is the complement of $V(I_{ij})$. This allows us to glue together the affine schemes $\bSpec R_i$ into a scheme. By construction, we have a natural transformation $F\to h_{\bX}$, which is an isomorphism when $F$ is a faisceau in the Zariski topology.

Part (3) follows from Lemma~\ref{LemFais}(1).
\end{proof}

\begin{remark}\label{RemTV2}
By Proposition~\ref{PropTV}, Remark~\ref{RemOpen}(2) and Proposition~\ref{PropCover}(2), it follows that the full subcategory of $\Fais_z\cC$ corresponding to $\cC$-schemes in Theorem~\ref{CompThm}(2) is exactly the category of `schemes relative to $\Ind\cC$' in \cite[D\'efinition~2.15]{TV}.

Equivalently, we can interpret Theorem~\ref{CompThm} as saying that the functor of points provides an equivalence between the category of $\cC$-schemes (defined geometrically in Definition~\ref{DefScheme}) and the category of `schemes relative to $\Ind\cC$' (defined categorically in the general theory of \cite{TV}). Hence, we have constructed a geometric realisation of the notion of schemes from \cite{TV} applied to suitable tensor categories.
\end{remark}

\begin{remark}
Based on Theorem~\ref{CompThm}, we say that $F\in \Fun\cC$ `is a scheme', if it is of the form $h_{\bX}$ for a scheme $\bX$. In other words,  $F\in \Fun\cC$  `is a scheme' if $|F|$ is a scheme such that $F\to h_{|F|}$ is an isomorphism.
\end{remark}

%
%
%


\section{Algebraic schemes}\label{SecAlgSch}
Assume now that $\bC$ is {\bf MN} and {\bf GR}.
We demonstrate that the notation of algebraic schemes and their basic properties, as derived for instance in \cite[\S I.3]{DG}, carry over without issues.

\subsection{Definitions}

\begin{definition}
\begin{enumerate}
\item A $\bC$-scheme $\bX=(X,\cO)$ is {\bf algebraic} if it has a finite open affine cover $\{U_\alpha\}$ such that $\cO(U_\alpha)$ is finitely generated for each $\alpha$.
\item A $\bC$-scheme $\bX=(X,\cO)$ is {\bf locally algebraic} if it has an open affine cover $\{U_\alpha\}$ such that $\cO(U_\alpha)$ is finitely generated for each $\alpha$.
\end{enumerate}
\end{definition}

We present some properties of algebraic schemes, with proofs which are direct analogues of the classical ones, found for instance in \cite{DG}.

\begin{remark}
\begin{enumerate}
\item For an algebraic $\bC$-scheme $\bX=(X,\cO)$, the topological space $X$ is noetherian. 
\item A locally algebraic $\bC$-scheme $\bX=(X,\cO)$ is algebraic if and only if $X$ is quasi-compact.
\item If $\bX$ is locally algebraic, then for any affine open $U\subset X$, the algebra $\cO(U)$ is finitely generated. In particular $\bSpec A$ is algebraic if and only if $A$ is finitely generated.
\end{enumerate}
\end{remark}

\begin{lemma}\label{LemLocFin}
If $\bX=(X,\cO)$ is a locally algebraic $\bC$-scheme, then there exists $x\in X$ for which the maximal ideal in $\cO_x$ is nilpotent.
\end{lemma}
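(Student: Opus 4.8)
The plan is to reduce to an affine chart and then localise at the generic point of an irreducible component, where the local ring is forced to have a single prime.

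First I would use local algebraicity to choose an affine open $\bSpec A\subset\bX$ with $A$ finitely generated (the statement implicitly assumes $X\neq\varnothing$, so such a chart exists). Since $\cC$ is \textbf{MN} and \textbf{GR}, the algebra $A$ is noetherian by~\ref{ConsMNGR}, so $\Spec A$ is a nonempty noetherian topological space and admits a minimal prime $\fp$. As the stalk of $\cO$ at the point $\fp$ only depends on the open subscheme $\bSpec A$, it suffices to prove that the maximal ideal $\fm$ of $\cO_{\fp}=A_{\fp}$ is nilpotent.

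Next I would pin down the prime structure of $A_{\fp}$. Under Hypothesis~\ref{hypo} the localisation $A_{\fp}=A\otimes_{A^0}A^0_{\fp^0}$ is a local algebra whose spectrum, through the homeomorphism $\rho_{A}:\Spec A\xrightarrow{\sim}\Spec A^0$ (available since $\cC$ is \textbf{MN} and \textbf{GR}) together with the classical behaviour of localisation, is identified with the set of primes of $A$ contained in $\fp$. Minimality of $\fp$ then forces $A_{\fp}$ to possess a unique prime ideal, namely its maximal ideal $\fm$. Hence $\fm$ coincides with the intersection of all primes of $A_{\fp}$, that is, with the nilradical $\Nil(A_{\fp})$ (see Example~\ref{ExNil} and \cite[Lemma~2.2.5]{ComAlg}).

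Finally I would invoke noetherianity a second time: as a localisation of the noetherian algebra $A$, the algebra $A_{\fp}$ is again noetherian, and for a noetherian algebra the nilradical is a nilpotent ideal. Therefore $\fm=\Nil(A_{\fp})$ is nilpotent, proving the lemma. I expect the main obstacle to be the middle step, namely making rigorous that localising at a minimal prime produces an algebra with a single prime ideal whose maximal ideal equals the nilradical; this is where I rely on the localisation theory of \cite{ComAlg} under Hypothesis~\ref{hypo} and on the homeomorphism $\rho_{A}$.
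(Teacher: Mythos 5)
Your proposal is correct and follows essentially the same route as the paper's proof: reduce to an affine chart $\bSpec A$ with $A$ finitely generated (hence noetherian), localise at a minimal prime $\fp$ so that $A_{\fp}$ is a noetherian local algebra with a unique prime ideal, and conclude via \cite[Lemma~2.2.5]{ComAlg} that the maximal ideal is nil and therefore nilpotent by finite generation. The only cosmetic differences are that the paper gets the minimal prime from Zorn's lemma rather than noetherianity of the space, and justifies the single-prime claim "by construction" (via \cite[Proposition~5.4.4]{ComAlg}) rather than through the homeomorphism $\rho_A$.
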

\begin{proof}
The claim reduces immediately to the case $\bX=\Spec A$ for a finitely generated (and hence noetherian) algebra $A$. Let $\fp$ be a minimal prime ideal (which exists by Zorn's lemma). Then, by construction $\cO_{\fp}$ is noetherian (as a localisation of a noetherian algebra) and has only one prime ideal, the maximal ideal. By \cite[Lemma~2.2.5]{ComAlg} the maximal ideal is nil, so also nilpotent as it is finitely generated.
\end{proof}

\subsection{Immersions}

The following lemma is straightforward but useful.
\begin{lemma}\label{LemTop}
For a point in a topological space $t\in T$, we write $\overline{t}$ for the closure in $T$ of the corresponding singleton. For a map $T_1\to T_2$ in $\mathsf{Top}$ and $t\in T_1$, we have
\begin{enumerate}
\item $\overline{t}\subset f^{-1}\left(\overline{f(t)}\right)$;
\item $u\in \overline{t}\;\Rightarrow\; f(u)\in \overline{f(t)}$.
\end{enumerate}
\end{lemma}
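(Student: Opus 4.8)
The statement of Lemma~\ref{LemTop} concerns two purely topological facts about the closure operation and its interaction with continuous maps. The plan is to prove each part directly from the definition of continuity and the characterisation of the closure as the smallest closed set containing a point.

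For part (1), I would argue as follows. Since $\overline{f(t)}$ is a closed subset of $T_2$ and $f$ is continuous, its preimage $f^{-1}\left(\overline{f(t)}\right)$ is a closed subset of $T_1$. Moreover this preimage contains $t$, because $f(t)\in\overline{f(t)}$. Since $\overline{t}$ is by definition the smallest closed subset of $T_1$ containing $t$, it must be contained in any closed set containing $t$, and in particular
$$\overline{t}\;\subset\; f^{-1}\left(\overline{f(t)}\right).$$
This is the essential content of part (1) and requires no further work.

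For part (2), I would simply translate the conclusion of part (1). Suppose $u\in\overline{t}$. By part (1), $u\in f^{-1}\left(\overline{f(t)}\right)$, which by definition of the preimage means exactly that $f(u)\in\overline{f(t)}$, as required. Thus part (2) is an immediate consequence of part (1), and one could alternatively phrase it as the observation that continuous maps carry closure into closure, i.e.\ $f(\overline{t})\subset\overline{f(t)}$.

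There is no substantive obstacle here; both statements are standard point-set topology and the only subtlety is keeping track of which closure lives in which space. The single idea doing all the work is that $\overline{t}$ is the intersection of all closed sets containing $t$, combined with the fact that preimages of closed sets under continuous maps are closed. I would therefore keep the proof to a few lines, emphasising the minimality characterisation of the closure in the first part and deducing the second part formally from it.
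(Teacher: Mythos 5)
Your proof is correct: part (1) follows from continuity (preimages of closed sets are closed) together with the minimality of the closure, and part (2) is indeed just an unwinding of part (1). The paper offers no proof at all, dismissing the lemma as ``straightforward,'' and your argument is precisely the standard one that is being implicitly invoked, so nothing further is needed.
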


\begin{corollary}\label{CorUB}
For an injective morphism $f:\bX\to \bY$ in $\Sch\cC$ with $\bX$ algebraic, and a non-empty open $U\subset X$, there exists an open $B\subset Y$ for which $\varnothing \not=f^{-1}(B)\subset U$.
\end{corollary}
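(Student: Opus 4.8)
The plan is to reduce the statement to the classical case of ordinary $k$-schemes, after first disposing by hand of the easy geometric situation. Since we are assuming $\bC$ is {\bf MN} and {\bf GR}, the map $\rho_A$ is a homeomorphism for every $A$, so by Remark~\ref{RemSpecA0}(2) and the discussion following Proposition~\ref{PropQA} the locally ringed spaces $\bX^0$ and $\bY^0$ are ordinary $k$-schemes with the \emph{same} underlying topological spaces $X$ and $Y$, and $f$ induces $f^0\colon\bX^0\to\bY^0$ with exactly the same underlying continuous map $X\to Y$. In particular $f^0$ is injective if and only if $f$ is, and an open $B\subset Y$ satisfies $\varnothing\neq (f^0)^{-1}(B)\subset U$ if and only if it satisfies $\varnothing\neq f^{-1}(B)\subset U$. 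Moreover, covering $\bX$ by finitely many affine opens $\bSpec A_\alpha$ with $A_\alpha$ finitely generated, each $(\bSpec A_\alpha)^0\simeq \bSpec A_\alpha^0$ and $A_\alpha^0$ is a finitely generated $k$-algebra (see \cite{ComAlg}); hence $\bX^0$ is an algebraic $k$-scheme. Thus the statement becomes precisely the classical topological fact about an injective morphism $f^0$ of $k$-schemes whose source is algebraic, for which I would refer to \cite[I.\S 3]{DG}.

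To exhibit the geometric content, and to show that the statement is \emph{not} purely topological, I would isolate the following. Write $Z=X\setminus U$; as $X$ is noetherian both $X$ and $Z$ have finitely many irreducible components, with generic points $\eta_1,\dots,\eta_n$ and $z_1,\dots,z_m$ respectively (using that $X$ is sober, Lemma~\ref{LemSober}). By Lemma~\ref{LemTop}(2) we have $f(\overline{\eta_i})\subset\overline{f(\eta_i)}$ and $f(\overline{z_j})\subset\overline{f(z_j)}$, whence $\overline{f(X)}=\bigcup_i\overline{f(\eta_i)}$ and $\overline{f(Z)}=\bigcup_j\overline{f(z_j)}$, each $\overline{f(\eta_i)}$ and $\overline{f(z_j)}$ being irreducible. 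One checks directly that $f^{-1}(B)\subset U$ with $f^{-1}(B)\neq\varnothing$ can be achieved if and only if $f(U)\not\subset\overline{f(Z)}$, in which case $B:=Y\setminus\overline{f(Z)}$ works; so the claim is equivalent to $\overline{f(Z)}\subsetneq\overline{f(X)}$.

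Now I would choose $i_0$ so that $\overline{f(\eta_{i_0})}$ is maximal among the $\overline{f(\eta_i)}$, i.e.\ an irreducible component of $\overline{f(X)}$. If the component $\overline{\eta_{i_0}}$ meets $U$, then $\eta_{i_0}\in U$ (the generic point lies in every non-empty open of its component), and I claim $f(\eta_{i_0})\notin\overline{f(Z)}$: otherwise $f(\eta_{i_0})\in\overline{f(z_j)}$ for some $j$, so $\overline{f(\eta_{i_0})}\subset\overline{f(z_j)}\subset\overline{f(X)}$ forces equality by maximality, and then $f(\eta_{i_0})=f(z_j)$ by soberness, hence $\eta_{i_0}=z_j$ by injectivity --- impossible since $\eta_{i_0}\in U$ and $z_j\in Z$. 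This settles the case where some component of $X$ with maximal image meets $U$. The main obstacle is the complementary case, in which every component of $X$ whose image is a top component of $\overline{f(X)}$ is contained in $Z$ while $U$ meets only components with strictly smaller image; here injectivity and topology alone are insufficient --- indeed the conclusion genuinely fails for a general noetherian source (for instance $\Spec k(t)\sqcup\Spec k\to\Spec k[t]_{(t)}$) --- and one must use that $\bX^0$ is \emph{algebraic}, so that $X$ is a Jacobson space whose closed points have residue field finite over $k$. This is exactly the input supplied by the classical argument, which is why the reduction in the first paragraph is the efficient route.
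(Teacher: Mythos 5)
Your second and third paragraphs reconstruct what is, in substance, the paper's own proof: the paper takes the generic points $x_1,\dots,x_l$ of the irreducible components of $X$ and $u_1,\dots,u_d$ of those of $X\setminus U$, sets $B'=\bigcup_i\overline{f(u_i)}$ (your $\overline{f(Z)}$), takes $B=Y\setminus B'$, and derives a contradiction from maximality, injectivity and sobriety exactly as you do. The one difference is decisive: the paper simply asserts that one can relabel so that \emph{both} $f(x_1)\notin\overline{f(x_j)}$ for $j>1$ \emph{and} $x_1\in U$, i.e.\ it assumes that your ``good case'' (some component of maximal image meets $U$) always occurs, whereas you correctly isolate the complementary case as the crux.

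The gap in your proposal is the final claim that this complementary case is settled by algebraicity via the classical theory (``one must use that $\bX^0$ is algebraic, so that $X$ is a Jacobson space\dots the input supplied by the classical argument''). No such classical argument exists, because the statement is false for algebraic sources. Make your own counterexample algebraic: replace $\Spec k(t)$ by $\Spec k[t,t^{-1}]$ and $\Spec k[t]_{(t)}$ by $\Spec k[t]$, i.e.\ take
$$f:\;\bX=\bSpec k[t,t^{-1}]\,\sqcup\,\bSpec k\;\longrightarrow\;\bSpec k[t]=\bY,$$
the open immersion on the first summand and the origin on the second, with $U\subset X$ the second summand. Here $\cC=\Vecc$ is {\bf MN} and {\bf GR}, $f$ is injective (even a monomorphism of schemes), $\bX$ is algebraic and $U$ is a non-empty open; yet every non-empty open $B\subset \mA^1$ meets the dense image of the first summand, so $f^{-1}(B)\subset U$ forces $f^{-1}(B)=\varnothing$. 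Jacobson-ness and residue fields are irrelevant: the obstruction is that $f(X\setminus U)$ is dense in $\overline{f(X)}$, which your own equivalence ($\overline{f(Z)}\subsetneq\overline{f(X)}$) shows to be fatal. So your reduction to $k$-schemes, though valid as a reduction, lands on a false statement---and by the same token your analysis exposes that the paper's relabeling step is unjustified and that the corollary is false as stated. It becomes true (by the argument you and the paper share) if one additionally requires $U$ to contain the generic point of a component of $X$ whose image-closure is maximal among all $\overline{f(x_i)}$---for instance if $U$ is dense---and this repaired form still suffices for its only application, Lemma~\ref{DisgustingLemma}: there one may choose the point of Lemma~\ref{LemLocFin} to be the generic point of such a component (its local ring has a unique prime, hence nilpotent maximal ideal), so that the open produced by Lemma~\ref{LemImm100} contains it.
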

\begin{proof}
Since $X$ is noetherian, it is a finite union of irreducible components. Hence, by Lemma~\ref{LemSober}, $X$ is a finite union of closures of distinct points $x_1,\cdots, x_l \in X$. 
 Since $f$ is injective, by Lemma~\ref{LemSober} applied to $Y$, we can relabel the $x_i$ so that firstly $f(x_1)\not\in \overline{f(x_j)}$ for $j>1$ and secondly $x_1\in U$.

Similarly, since $X\backslash U$ is noetherian, it is a finite union of irreducible components. Hence, by Lemma~\ref{LemSober}, $X\backslash U$ is a finite union of closures (taken in $X$ or equivalently in $X\backslash U$) of points $u_1,\cdots, u_d \in X\backslash U$.

We set 
$$B':=\cup_i \overline{f(u_i)}\subset Y.$$
By Lemma~\ref{LemTop}(1), we have
$$X\backslash U\;  \subset\; f^{-1}(B')=B'\cap X.$$
We claim that furthermore
$$x_1\not\in f^{-1}(B'),$$
so that that we can take $B$ to be the complement of $B'$, proving the corollary.

We prove the remaining claim by contradiction. Assume that $f(x_1)\in \overline{f(u_i)}$ for some~$i$. Since $u_i\in \overline{x_j}$ for some $j$, Lemma~\ref{LemTop}(2) shows $f(u_i)\in \overline{f(x_j)}$.
In conclusion, we find $f(x_1)\in \overline{f(x_j)}$, which is indeed a contradiction, by the first paragraph, unless $j=1$. Lemma~\ref{LemSober} implies that the latter option implies $u_i=x_1$, which contradicts $x_1\in U$.
\end{proof}

\begin{lemma}\label{LemImm100}
Let $f:(X,\cO_X)=\bX\to \bY=(Y,\cO_Y)$ be a morphism of $\bC$-schemes where $\bX$ is locally algebraic. If there is a point $x\in X$ for which $\cO_{\bY,f(x)}\to \cO_{X,x}$ is an epimorphism in $\Ind\cC$, there exists an open neighbourhood $x\in U\subset X$ for which $\bX|_U\to \bY$ is an immersion.
\end{lemma}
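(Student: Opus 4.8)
The plan is to reduce to the affine case and then \emph{spread out} the given surjectivity of the stalk map. The essential difficulty, absent in the classical setting of \cite{DG}, is that the generating object $V\in\cC$ of the algebra is not projective, so one cannot lift individual generators through the stalk surjection; I circumvent this by spreading out the surjection itself, using that finite-length objects are compact in $\Ind\cC$.

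First I would set up the affine reduction. Choose an affine open $\bSpec B\subset\bY$ containing $f(x)$; then $f^{-1}(\bSpec B)$ is an open neighbourhood of $x$, and since $\bX$ is locally algebraic I may shrink it to an affine open $\bSpec A\ni x$ inside $f^{-1}(\bSpec B)$ with $A$ finitely generated. Writing $\phi:B\to A$ for the corresponding algebra map and $\fp=t_A(x)$, $\fq=t_B(f(x))$ for the relevant primes, the stalks are $\cO_{X,x}=A_{\fp}$ and $\cO_{Y,f(x)}=B_{\fq}$ by Theorem~\ref{ThmExistII}, and the hypothesis says $\phi_{\fp}:B_{\fq}\to A_{\fp}$ is an epimorphism in $\Ind\cC$, i.e.\ a surjection of objects. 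As $A$ is finitely generated over $\unit$, it is a fortiori finitely generated over $B$: there are a finite-length $V\in\cC$ and a surjection of $B$-algebras $B\otimes\Sym V\tto A$, with generators $j:V\to A$. It suffices to produce $f\in A^0\backslash\fp^0$ for which $\bX|_{X_f}=\bSpec A_f$ (Remark~\ref{SpecAf}(1)) maps to $\bY$ by an immersion, since then $x\in X_f$ is the required neighbourhood.

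Next I would spread out the surjectivity. Work over the filtered system of pairs $(f,g)$ with $g\in B^0\backslash\fq^0$, $f\in A^0\backslash\fp^0$ and $\phi(g)\mid f$, so that $B_{\fq}=\varinjlim_g B_g$, $A_{\fp}=\varinjlim_f A_f$, and the maps $B_g\to A_f$ exist. Filtered colimits are exact in $\Ind\cC$, so images commute with them and
$$A_{\fp}\;=\;\im(\phi_{\fp})\;=\;\varinjlim_{(f,g)}\im(B_g\to A_f)$$
as subobjects of $A_{\fp}$, the first equality being surjectivity. Since $V$ is of finite length it is a compact object of $\Ind\cC$, so $\Hom(V,-)$ commutes with this colimit; hence $j_{\fp}:V\to A_{\fp}$ factors through some $\im(B_g\to A_f)$, and a second use of compactness (comparing with the tautological $j_f:V\to A_f$) lets me arrange, after enlarging $f$, that already $j_f:V\to A_f$ factors through the subalgebra $A':=\im(B_g\to A_f)\subseteq A_f$. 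This replacement of the classical lifting of generators by a colimit-and-compactness argument is the step I expect to be the main obstacle; everything after it is bookkeeping.

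Finally I would conclude. Since $\phi(B)\subseteq A'$ and $j_f(V)\subseteq A'$, and $A$ is generated over $B$ by $V$, the image of $A$ in $A_f$ lies in $A'$; as $f\in A^0$, its image then lies in $A'$ and is a unit of $A_f$, whence $A_f=A'_f$ is the localisation of $A'$ at (the image of) $f$. Therefore $\bSpec A_f\to\bSpec B_g$ factors as the open immersion $\bSpec A_f=\bSpec A'_f\hookrightarrow\bSpec A'$ followed by the closed immersion $\bSpec A'\hookrightarrow\bSpec B_g$ attached to the surjection $B_g\tto A'$, and is thus an immersion (a composite of a closed and an open immersion, exactly as in \cite[I.2.4]{DG}). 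Composing with the open immersions $\bSpec B_g\hookrightarrow\bSpec B\hookrightarrow\bY$ exhibits $\bX|_{X_f}\to\bY$ as an immersion with $x\in X_f$, as required.
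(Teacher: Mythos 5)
Your proof is correct, and it shares the paper's overall strategy (reduce to the affine case $\bSpec A\to\bSpec B$, then spread out the stalk surjection $B_{\fq}\tto A_{\fp}$ using compactness of finite-length objects), but the key mechanism is genuinely different. The paper first invokes noetherianity of the finitely generated algebra $A$ (available since $\cC$ is {\bf MN} and {\bf GR}): the kernel of $A\to A_{\fp}$ is finitely generated, so $A_t\to A_{\fp}$ is a monomorphism for some $t\in A^0\setminus\fp^0$; a single compactness argument then produces $s\in B^0\setminus\fq^0$ with $\im(B_s\to A_{\fp})\supseteq A_t$, whence $B_s\to A_{t\phi(s)}$ is an epimorphism and $\bSpec A_{t\phi(s)}\to\bSpec B_s\to\bY$ is literally a closed immersion followed by open immersions, matching Definition of immersion on the nose, with $U=X_{t\phi(s)}$. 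You avoid noetherianity entirely, replacing the injectivity step by exactness of filtered colimits ($A_{\fp}=\varinjlim\im(B_g\to A_f)$) and two applications of compactness of the generating object $V$; this is a mild gain in generality (only finite generation of $A$ is used), but the price is that you land on the factorization $\bSpec A_f=\bSpec A'_{\bar f}\hookrightarrow \bSpec A'\hookrightarrow \bSpec B_g$, i.e.\ open-inside-closed, which is the opposite order from the paper's definition of immersion (closed-then-open). Your appeal to \cite[I.2.4]{DG} at that point is the one spot deserving an explicit sentence in this setting, though it is routine: since $\Spec A'$ carries the subspace topology from $\Spec B_g$, the open subset $(\Spec A')_{\bar f}$ equals $\Spec A'\cap W$ for some open $W\subset\Spec B_g$, and the quasi-coherent ideal sheaf defining $\bSpec A'$ restricts to $W$, exhibiting $\bSpec A_f\to \bSpec B_g|_W$ as a closed immersion followed by an open immersion. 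With that observation supplied, your argument is complete.
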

\begin{proof}
The statement reduces to the case $\bX=\bSpec A$ and $\bY=\bSpec B$, where $A$ is finitely generated (and hence noetherian). Denote the algebra morphism corresponding to $f$ by $\phi:B\to A$.

Since the kernel of $A\to \cO_{\bX,x}$ is finitely generated, it follows that $A_t\to \cO_{\bX,x}$ is a monomorphism for some $t\in A^0$. Since $A_t$ is finitely generated, there must be $s\in B$ for which the image of $B_s\to \cO_{X,x}=A_{\fp}$ contains $A_t$. It then follows easily that
$$B_s\to A_{t\phi(s)}$$
is an epimorphism in $\Ind\cC$. We can thus take $U=X_{t\phi(s)}$
\end{proof}

\begin{lemma}\label{DisgustingLemma}
For a monomorphism $f:\bX\to \bY$ in $\Sch\cC$, with $\bX$ algebraic, there exists an open $B\subset Y$ such that $f:\bX|_{f^{-1}(B)}\to\bY|_B$ is an immersion and $f^{-1}(B)\not=\varnothing$.
\end{lemma}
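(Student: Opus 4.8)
\emph{Proof plan.} The plan is to locate a single point of $X$ where the stalk map is an epimorphism, apply Lemma~\ref{LemImm100} to get an immersion on a neighbourhood, and then transport this to an open of $Y$ using injectivity of $f$. Since we are in the setting where $\cC$ is \textbf{MN} and \textbf{GR}, Lemma~\ref{LemMono} applies: as $f$ is a monomorphism it is injective (part (1)), and for every $x\in X$ the composite $\cO_{\bY,f(x)}\to\cO_{X,x}/\fm\cO_{X,x}$ is an epimorphism in $\Ind\cC$, where $\fm$ is the maximal ideal of $\cO_{\bY,f(x)}$. Because $\bX$ is algebraic it is locally algebraic, so Lemma~\ref{LemLocFin} supplies a point $x_0\in X$ at which the maximal ideal $\mathfrak{n}$ of $\cO_{X,x_0}$ is nilpotent. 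I would fix this $x_0$ for the rest of the argument.

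The crux is to upgrade Lemma~\ref{LemMono}(2) at $x_0$ to the statement that $\phi:A:=\cO_{\bY,f(x_0)}\to B:=\cO_{X,x_0}$ is itself an epimorphism. Since $\phi$ is local we have $\fm B\subset\mathfrak{n}$, so $\fm B$ is nilpotent and $\bar B:=B/\fm B$ is a local algebra with nilpotent maximal ideal $\bar{\mathfrak{n}}=\mathfrak{n}/\fm B$. As $\fm$ dies in $\bar B$, the map $A\to\bar B$ factors through the residue field $\kappa:=A/\fm$, which is a field extension of $k$ because $\cC$ is \textbf{MN} and \textbf{GR}; by right cancellation of the (surjective, hence epi) map $A\tto\kappa$, Lemma~\ref{LemMono}(2) says precisely that $\kappa\to\bar B$ is an epimorphism. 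Composing with $\bar B\tto\lambda:=B/\mathfrak{n}$ makes $\kappa\to\lambda$ an epimorphism of field extensions of $k$, hence an isomorphism, so $\bar B$ has residue field $\kappa$ and splits as $\bar B\cong\kappa\oplus\bar{\mathfrak{n}}$. Examining the two coprojections $\bar B\rightrightarrows\bar B\otimes_\kappa\bar B$ on the summand $\bar{\mathfrak{n}}$ then forces $\bar{\mathfrak{n}}=0$, so $A\to\bar B$ is actually surjective, i.e. $B=\phi(A)+\fm B$. Since $\fm B$ is nilpotent, the $A$-module $B/\phi(A)$ equals $\fm\cdot(B/\phi(A))$ and hence vanishes, so $\phi$ is surjective and in particular an epimorphism in $\Ind\cC$.

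With the epimorphism $\cO_{\bY,f(x_0)}\to\cO_{X,x_0}$ established, Lemma~\ref{LemImm100} (applicable since $\bX$ is locally algebraic) produces an open neighbourhood $x_0\in U\subset X$ for which $\bX|_U\to\bY$ is an immersion. Now I would invoke Corollary~\ref{CorUB}: $f$ is injective, $\bX$ is algebraic, and $U$ is a nonempty open, so there is an open $B\subset Y$ with $\varnothing\neq f^{-1}(B)\subset U$. Because $f^{-1}(B)\subset U$ is open in $X$ and $f(f^{-1}(B))\subset B$, restricting the immersion $\bX|_U\to\bY$ over the open $B$ exhibits $\bX|_{f^{-1}(B)}\to\bY|_B$ as an immersion, with $f^{-1}(B)\neq\varnothing$; this is exactly the assertion.

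The hard part will be the second paragraph. The naive statement ``epimorphism modulo a nilpotent ideal is an epimorphism'' is \emph{false} in general --- already $k\hookrightarrow k[\epsilon]/\epsilon^2$ is epi modulo the nilpotent ideal $(\epsilon)$ while $k\to k[\epsilon]/\epsilon^2$ is not --- so the argument genuinely uses that the nilpotent ideal is $\fm\cO_{X,x_0}$ rather than an arbitrary one: this makes the relevant quotient a local algebra over the residue field $\kappa$ receiving an epimorphism \emph{from} $\kappa$, which collapses it to $\kappa$ and converts the epimorphism into a surjection. I expect the two points requiring the most care to be the claim that an epimorphism between field extensions of $k$ in $\cC$ is an isomorphism, and the verification that the splitting $\bar B\cong\kappa\oplus\bar{\mathfrak{n}}$ and the Nakayama-type vanishing are valid for these possibly large $\Ind\cC$-algebras.
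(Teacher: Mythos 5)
Your proposal is correct and follows the same route as the paper's proof: the point supplied by Lemma~\ref{LemLocFin}, Lemma~\ref{LemMono}, a Nakayama-type argument using nilpotence of $\fm\cO_{X,x_0}$, then Lemma~\ref{LemImm100} and Corollary~\ref{CorUB}. The only divergence is your second paragraph, which re-derives surjectivity of $\cO_{\bY,f(x_0)}\to\cO_{X,x_0}/\fm\cO_{X,x_0}$ from a categorical-epimorphism reading of Lemma~\ref{LemMono}(2); since that lemma asserts an epimorphism in the abelian category $\Ind\cC$, i.e.\ a surjection, this step can be quoted directly (as the paper does), so your detour via the splitting $\bar B\cong\kappa\oplus\bar{\mathfrak{n}}$ and the two coprojections into $\bar B\otimes_\kappa\bar B$ is correct but redundant---it essentially reproves what is established inside the proof of Lemma~\ref{LemMono}(2).
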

\begin{proof}
We claim that there exists a non-empty open $U\subset X$ for which $\bX|_{U}\to \bY$ is an immersion. The conclusion then follows from Corollary~\ref{CorUB}.

To prove the claim, we can apply the proof of \cite[Proposition~I.3.4.6]{DG}: By Lemma~\ref{LemImm100} it is sufficient to show that there exists $x\in X$ such that $\cO_{Y,f(x)}\to \cO_{X,x}$ is an epimorphism. If we take $x\in X$ as in Lemma~\ref{LemLocFin}, then it is sufficient to show that $\cO_{Y,f(x)}\to \cO_{X,x}/f(\fm)\cO_{X,x}$ is an epimorphism, for $\fm$ the maximal ideal in $\cO_{Y,f(x)}$, by Nakayma's lemma. Now that $\cO_{Y,f(x)}\to \cO_{X,x}/f(\fm)\cO_{X,x}$ is an epimorphism is true by Lemma~\ref{LemMono}(2).
\end{proof}


\subsection{Rational points}
Let $\bX=(X,\cO)$ be a $\cC$-scheme.
\subsubsection{}
Note that sending a morphism $\bSpec\unit\to \bX$ to the image of the underlying map $\ast\to X$ of topological spaces yields an injection. We thus view 
\begin{equation}\label{X1}\bX(\unit)\;\subset\; X\end{equation}
as a subspace (with subspace topology).

\begin{lemma}\label{LemXfin}
Assume that $k$ is algebraically closed and that $\bX$ is locally algebraic. Then
$U\mapsto U\cap\bX(\unit)$ yields a bijection between the open subsets of $X$ and $\bX(\unit)$.
\end{lemma}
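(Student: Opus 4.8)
The plan is to treat surjectivity and injectivity of $U\mapsto U\cap\bX(\unit)$ separately. Surjectivity is automatic: by definition of the subspace topology on $\bX(\unit)\subset X$ from \eqref{X1}, the open subsets of $\bX(\unit)$ are \emph{precisely} the traces $U\cap\bX(\unit)$ of open $U\subset X$, so each of them is attained. The entire content is therefore injectivity, and here I would first record the elementary topological reformulation: for a subset $S\subset X$, the assignment $U\mapsto U\cap S$ is injective on open subsets if and only if every non-empty \emph{locally closed} subset of $X$ meets $S$. Indeed, if $U_1\neq U_2$, say $U_1\not\subset U_2$, then $U_1\cap(X\setminus U_2)$ is non-empty and locally closed, and a point of $S$ in it separates the two traces; conversely, for a locally closed $L=U\cap F$ with $L\cap S=\varnothing$ one has $U\cap S=(U\setminus F)\cap S$, so injectivity applied to $U$ and $U\setminus F$ forces $U\cap F=\varnothing$. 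Thus it suffices to show that every non-empty locally closed $L\subset X$ meets $\bX(\unit)$.

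Next I would reduce this to an affine statement. Since $\bX$ is locally algebraic, $X$ is covered by affine opens $\bSpec A$ with $A$ finitely generated, so a non-empty $L$ meets one such $V=\bSpec A$, and $L\cap V$ is non-empty and locally closed in $\Spec A$. Writing its closure as $V(I)$ and using that, under Hypothesis~\ref{hypo}, the sets $\{\Spec(A/I)_{\bar g}\}$ form a basis of $\Spec(A/I)=V(I)$, I can choose a non-empty basic open $\bSpec B\subset L\cap V$ with $B=(A/I)_{\bar f}$ finitely generated and non-zero. The locally closed immersion $\bSpec B\hookrightarrow\bX$, composed with any rational point $\bSpec\unit\to\bSpec B$, then yields an element of $\bX(\unit)=\Sch\cC(\bSpec\unit,\bX)$ whose image lies in $L$. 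Hence the whole lemma reduces to the following Nullstellensatz: every non-zero finitely generated $B\in\Alg\cC$ admits a maximal ideal $\fm<B$ with $B/\fm\simeq\unit$.

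The hard part is this Nullstellensatz, and it is where algebraic closedness of $k$ together with {\bf MN} and {\bf GR} enter. My approach would be: since $\cC$ is {\bf GR}, the invariant algebra $B^0$ is a finitely generated $k$-algebra, so the classical Nullstellensatz provides a maximal ideal $\mathfrak{n}<B^0$ with $B^0/\mathfrak{n}=k$. As $\rho_B:\Spec B\to\Spec B^0$ is a homeomorphism (see~\ref{ConsMNGR}) carrying maximal ideals to maximal ideals, there is a corresponding maximal $\fm<B$ with $\fm^0=\mathfrak{n}$. By~\ref{ConsMNGR} the residue algebra $B/\fm$ is simple, hence a field extension $\unit_K$ with $K=(B/\fm)^0\supseteq B^0/\mathfrak{n}=k$; being a finitely generated quotient of $B$, the field $K$ is finitely generated over $k$, so finite over $k$ by Zariski's lemma, and therefore $K=k$ as $k$ is algebraically closed. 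Thus $B/\fm\simeq\unit$, giving the required rational point. The main obstacle is assembling these commutative-algebra inputs over the tensor category---finite generation of $B^0$ and the identification of the residue field $B/\fm$ with $\unit$---which I would extract from \cite{ComAlg}; once the Nullstellensatz is in hand, the topological bijection follows formally from the two reductions above.
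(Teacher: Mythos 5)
Your argument is correct, and at the top level it shares the paper's skeleton: reduce the bijection to the statement that every non-empty locally closed subset of $X$ meets $\bX(\unit)$, pass to an affine algebraic open, and finish with a Nullstellensatz for non-zero finitely generated algebras in $\cC$ over $k=\bar{k}$. The execution differs in both halves, though. Topologically, the paper works with closed sets: the inverse map is $W\mapsto\overline{W}$, so the whole content becomes the density $\overline{V\cap\bX(\unit)}=V$ for closed $V$; its contradiction setup then produces a locally closed set that is automatically \emph{closed} in an affine algebraic open $U$, so it can simply take a point $x\in U\cap V$ closed in $U$ and invoke \cite[Proposition~3.2.2]{ComAlg} together with finite generation and $k=\bar{k}$ to conclude that the simple quotient at $x$ is $\unit$, i.e.\ $x\in\bX(\unit)$. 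You instead handle a general locally closed $L$ by carving out a basic open affine $\bSpec B\subset L\cap V$ of the closure $V(I)$ (a step the paper's closed-set formulation avoids entirely), and you prove the Nullstellensatz by a detour through the invariant algebra: finite generation of $B^0$ (the Nagata-type theorem for {\bf GR} categories in \cite[\S 6]{ComAlg}), the classical Nullstellensatz, the homeomorphism $\rho_B$, the structure of simple algebras from \ref{ConsMNGR}, and Zariski's lemma. The paper's route is shorter because the structural result on (prime and) simple quotients does all the work at once; your route is more self-contained in that it exhibits exactly which classical ingredients are being transported into $\cC$, but it is heavier than necessary: since $B\neq 0$, Zorn's lemma already provides a maximal ideal $\fm<B$, whose residue $B/\fm$ is simple, hence of the form $\unit_K$ by \ref{ConsMNGR}, and then Zariski's lemma plus $k=\bar{k}$ give $K=k$. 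The passage through $B^0$, the classical Nullstellensatz and $\rho_B$ only re-manufactures a maximal ideal you could take for free.

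One step in your Nullstellensatz deserves an explicit justification: that $K=(B/\fm)^0$ is finitely generated over $k$ is not purely formal from $B/\fm$ being a finitely generated quotient of $B$. It does follow, either by applying finite generation of invariants to the finitely generated algebra $B/\fm$ itself, or more directly by noting that every subobject of the trivial ind-object $\unit_K$ lies in $\Vecc$ (subobjects of $\unit^{\oplus n}$ are trivial since $\unit$ is simple), so a generating object of $B/\fm$ spans a finite-dimensional subspace of $K$ generating it as a $k$-algebra. This is a fillable, presentational gap rather than a mathematical one.
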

\begin{proof}
We prove the corresponding statement for closed subsets, which is equivalent. We claim that an inverse of $V\mapsto V\cap \bX(\unit)$ is given by $W\mapsto \overline{W}$, for closed $W\subset \bX(\unit)$. The only non-trivial claim is that the inclusion
$$\overline{V\cap\bX(\unit)}\subset V$$
is an equality, for every closed $V\subset X$, under the given assumptions. For a contradiction, assume the equality would be strict, then there is an open $U\subset X$ (which we assume to be affine algebraic without loss of generality) for which $U\cap V\not=\varnothing$, but $U\cap V\cap \bX(\unit)=\varnothing$. Let $x\in U\cap V$ be a point closed in $U$. By the finite type assumption and \cite[Proposition~3.2.2]{ComAlg}, it follows that the corresponding simple quotient of $\Gamma(U,\cO)$ is $\unit$ and hence $x\in \bX(\unit)$, which contradicts $U\cap V\cap \bX(\unit)=\varnothing$.
\end{proof}

\section{Homogeneous spaces}
\label{SecGH}
In this section, we fix a {\bf MN} and {\bf GR} tensor category $\cC$, and we make the following hypothesis, which is for instance satisfied for $\cC=\sVec$, by \cite{MZ}.

We refer to \cite[\S 7]{ComAlg} for the basics about affine group schemes in tensor categories.

\subsubsection{Hypothesis}\label{hypoQuo} For every algebraic group $G$ in $\cC$, and every subgroup $H<G$, the quotient in $\Fun\cC$
$$G/_0H\;:\; A\mapsto G(A)/H(A)$$
is such that its sheafification in $\Fais\cC$ is an algebraic scheme (here, and below, we freely use Theorem~\ref{CompThm}). We denote this sheafification by $G/H$ and by assumption we thus have a co-equaliser
\begin{equation}\label{EqCoeq}
G\times H\;\rightrightarrows\; G\;\to\; G/H
\end{equation}
in $\Sch\cC$.

\subsection{Some consequences} Fix an algebraic group $G$ in $\cC$ with subgroup $H<G$.

\subsubsection{}\label{DefQuo1} We have an isomorphism
$$G\times H\;\simeq\; G\times_{G/H} G $$
in $\Fais\cC$ which comes from $G\times H\to G\times G$, $(g,h)\mapsto (g,gh)$.

\begin{prop}\label{QuoFP}
The quotient morphism $G\to G/H$ is faithfully flat and affine (the inverse image of every affine open in $G/H$ is an affine open in $G$).
\end{prop}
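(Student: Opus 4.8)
The plan is to prove faithful flatness first and then obtain affineness by faithfully flat descent, so that the parenthetical description of ``affine'' emerges at the end. \emph{Surjectivity} of $\pi\colon G\to G/H$ is the soft part: since $G/H$ represents the $\Fais\cC$-sheafification of $A\mapsto G(A)/H(A)$, every point of $G/H$ is, over a suitable field (simple algebra) extension, in the image of a point of $G$; as $G/H$ is sober by Lemma~\ref{LemSober}, this makes the underlying map $G\to G/H$ surjective. (Alternatively one reads surjectivity off the coequaliser \eqref{EqCoeq} together with Lemma~\ref{cocomplete}.)

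\emph{Flatness} is the heart of the matter, and I would get it by combining generic flatness with homogeneity. By \ref{ConsMNGR}, since $G$ and $G/H$ are algebraic the algebras in play are noetherian, so generic flatness applies and the flat locus
$$U\;=\;\{\,g\in G\mid \cO_{G/H,\pi(g)}\to\cO_{G,g}\ \text{is flat}\,\}\;\subseteq\;G$$
is a nonempty open subset. To propagate flatness to all of $G$, I would use the two isomorphisms $\theta\colon G\times G\xrightarrow{\sim}G\times G$, $(a,g)\mapsto(a,ag)$, and $\theta'\colon G\times (G/H)\xrightarrow{\sim}G\times(G/H)$, $(a,x)\mapsto(a,a\cdot x)$, which by $G$-equivariance of $\pi$ satisfy
$$(\mathrm{id}\times\pi)\circ\theta\;=\;\theta'\circ(\mathrm{id}\times\pi).$$
The flat locus of $\mathrm{id}\times\pi$ in $G\times G$ is exactly $G\times U$ (tensoring with the flat factor $\cO_{G,a}$ neither creates nor destroys flatness), and since $\theta,\theta'$ are isomorphisms the identity above forces $\theta(G\times U)=G\times U$. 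Reading off $\theta(a,g)=(a,ag)$, this says $aU=U$ for every $a\in G$, i.e.\ $U$ is stable under left translation; as $G$ acts transitively on itself and $U\neq\varnothing$, we conclude $U=G$. Hence $\pi$ is flat, and with surjectivity it is faithfully flat.

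\emph{Affineness} is then formal. By \ref{DefQuo1} we have $G\times H\simeq G\times_{G/H}G$, and under this isomorphism the first projection $p_1\colon G\times_{G/H}G\to G$ corresponds to $\mathrm{pr}_G\colon G\times H\to G$, which is affine as a base change of the affine morphism $H\to\bSpec\unit$. But $p_1$ is precisely the base change of $\pi$ along the now faithfully flat $\pi$; so faithfully flat descent of affine morphisms—available here through the descent theory of \cite{Brand} together with the coequaliser description of Proposition~\ref{SpecProd}(3)—yields that $\pi$ itself is affine. Unwinding the definition of an affine morphism gives exactly the parenthetical assertion that $\pi^{-1}(V)$ is affine for every affine open $V\subseteq G/H$.

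I expect the main obstacle to be the flatness step, and within it specifically the generic flatness input: one must establish nonemptiness (and openness) of the flat locus for a finite-type morphism of algebraic $\cC$-schemes using only noetherianity from \ref{ConsMNGR}, without a fibre functor to $\Vecc$ to fall back on. Once generic flatness is in hand, the homogeneity argument via $\theta,\theta'$ and the transitivity of left translation propagate flatness at no cost, and the affineness then follows by a routine faithfully flat descent, so these are comparatively formal.
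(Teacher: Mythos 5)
Your affineness step and your use of \ref{DefQuo1} match the paper, but the flatness step has a genuine gap, and it is precisely where you yourself locate the ``main obstacle''. Your argument rests on generic flatness for finite-type morphisms of algebraic $\cC$-schemes, and this is simply not available: it is not proved in the paper, not provided by \cite{ComAlg}, and its classical proof (d\'evissage of a module through a filtration with quotients of the form $A/\fp$) does not transfer to tensor categories, where modules admit no such prime filtration and noetherianity alone does not suffice. A proposal that defers the central analytic input to an unproven foundational theorem is not a proof. Moreover, the homogeneity step that is supposed to propagate flatness is also flawed as written: you treat the underlying topological space of $G$ as if group elements acted on it by translations (``$aU=U$ for every $a\in G$'', ``$G$ acts transitively on itself''), but points of $|G|$ are prime ideals, $|G\times G|\neq |G|\times|G|$, and in particular the claim that the flat locus of $\mathrm{id}\times\pi$ equals $G\times U$ because ``tensoring with the flat factor $\cO_{G,a}$'' preserves flatness has no meaning at the level of local rings of the product. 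Repairing this would require working with field-valued points and base change of $\cC$-schemes along field extensions, yet more foundations the paper never develops.

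The paper's proof avoids all of this by running the descent argument in the opposite direction from yours. By \ref{DefQuo1}, $G\times_{G/H}G\simeq G\times H$, so the projection $G\times_{G/H}G\to G$ is faithfully flat for free (it is $\mathrm{pr}_G:G\times H\to G$, i.e.\ base change of the faithfully flat $\unit\to\cO(H)$). Since $G\to G/H$ is an epimorphism in $\Fais\cC$, the standard descent statement \cite[III.1.2.11]{DG} then yields that $G\to G/H$ itself is faithfully flat: one descends faithful flatness from the self-fibre-product projection to the quotient map, rather than establishing flatness of the quotient map directly. No generic flatness and no translation argument on topological spaces are needed. Your affineness paragraph is essentially the paper's (which cites \cite[I.2.3.9]{DG}); if you replace your flatness argument by this descent step, the whole proof goes through with the tools actually available in the paper.
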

\begin{proof}
By \ref{DefQuo1}, the (left) projection $G\times_{G/H}G\to G$ is faithfully flat, as it is just the projection $G\times H\to G$. 

That the epimorphism $G\to G/H$ (in $\Fais\cC$) is then also faithfully flat is then a standard consequence, see for instance \cite[III.1.2.11]{DG}.
That the morphism is affine also follows exactly as in the classical case, see \cite[I.2.3.9]{DG}.
\end{proof}

To fix notation, for the coequaliser in \eqref{EqCoeq}, we write $p:G\to G/H$ and $q: G\times H\to G/H$ for either composite. We also write $G/H=(X,\cO)$.

\subsubsection{} For $M\in \Rep^\infty_{\cC}H$, we consider the equaliser
$$\cL(M):=\mathrm{Eq}(p_\ast\cO_G \otimes M\rightrightarrows q_\ast \cO_{G\times H}\otimes M)$$
as $\cC$-(pre-)sheaves on $X$. One arrow comes from $p_\ast\cO_G\to q_\ast \cO_{G\times H}$ corresponding to multiplication $G\times H\to G$. The other arrow comes from the co-action $M\to \cO(H)\otimes M$.

As in \cite[Example~5.10]{Jantzen} we find
$\cL(\unit)\simeq \cO.$ This gives $\cL(M)$ the structure of an $\cO$-module, which is easily seen to be a quasi-coherent sheaf.

\begin{theorem}\label{ThmL}
\begin{enumerate}
\item The functor
$$\cL:\Rep^\infty_{\cC}H\;\to\; \QCoh_{\cC}(G/H)$$
is faithful and exact.
\item We have a natural isomorphism $\Gamma(\cL(M))\simeq \Ind^G_HM$ in $\Ind\cC$, for each $M\in \Rep^\infty_{\cC}H$.
\end{enumerate}
\end{theorem}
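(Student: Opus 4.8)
The plan is to prove both parts by reducing to the affine case and exploiting the faithfully flat quotient morphism $p:G\to G/H$ from Proposition~\ref{QuoFP}, together with the sheaf-theoretic description of $\cL(M)$ as an equaliser. Since $p$ is affine, the inverse image of any affine open $U=\bSpec B\subset X=G/H$ is an affine open $\bSpec C\subset G$, where $C$ carries a residual $H$-action making $C$ into a (faithfully flat) $B$-comodule algebra for $\cO(H)$. Restricting the defining equaliser to such a $U$ gives
$$\cL(M)(U)\;=\;\mathrm{Eq}\bigl(C\otimes M\rightrightarrows C\otimes\cO(H)\otimes M\bigr),$$
where the two maps are the coaction on $C$ twisted against the coaction on $M$; this is precisely the formula for the induced module, so the local sections compute a relative form of $\Ind^G_H$.

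First I would establish part (1). Faithfulness is inherited from the faithful flatness of $p$: the equaliser defining $\cL(M)$ sits inside $p_\ast\cO_G\otimes M$, and since $p$ is faithfully flat the unit $M\to (\text{descent of }p^\ast p_\ast)$ is injective locally, so $M\mapsto\cL(M)$ cannot kill a nonzero module or morphism. For exactness, the key point is that over each affine open $U$ the faithful flatness of $C$ over $B$ makes the equaliser functor $M\mapsto\mathrm{Eq}(C\otimes M\rightrightarrows C\otimes\cO(H)\otimes M)$ exact — this is the standard descent/faithfully-flat-base-change fact that the Amitsur-type complex for a faithfully flat extension is exact in the relevant range, applied in $\Ind\cC$ using the commutative-algebra machinery of \cite{ComAlg} available under {\bf MN} and {\bf GR}. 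Exactness is local and $\cL(M)$ is quasi-coherent, so exactness of $\cL$ follows from exactness on an affine cover via Proposition~\ref{PropQCoh2}.

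For part (2), I would take global sections of the defining equaliser. Because $\Gamma(X,-)$ is left exact (it is a right adjoint on sheaves, and $\cL(M)$ is computed as an equaliser of sheaves), it commutes with the equaliser, giving
$$\Gamma(\cL(M))\;\simeq\;\mathrm{Eq}\bigl(\Gamma(X,p_\ast\cO_G)\otimes M\rightrightarrows\Gamma(X,q_\ast\cO_{G\times H})\otimes M\bigr).$$
Now $\Gamma(X,p_\ast\cO_G)=\Gamma(G,\cO_G)=\cO(G)$ and similarly the right-hand term computes $\cO(G)\otimes\cO(H)$, so the equaliser becomes exactly the subobject of $\cO(G)\otimes M$ on which the coaction of $G$ (restricted to $H$) agrees with the coaction on $M$ — which is the definition of $\Ind^G_H M=(\cO(G)\otimes M)^H$. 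Naturality in $M$ is clear from naturality of all the maps involved.

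The main obstacle I expect is the exactness claim in part (1), more precisely justifying that the local equaliser functor $M\mapsto\mathrm{Eq}(C\otimes M\rightrightarrows C\otimes\cO(H)\otimes M)$ is \emph{exact} and not merely left exact. Right-exactness requires genuinely using faithful flatness of $C$ over $B$ (equivalently of $p$) together with the fact that $\cO(H)$ is itself faithfully flat over $\unit$, so that the relevant cosimplicial resolution is contractible after base change along $p$; one must verify the classical argument (as in \cite[Example~5.10]{Jantzen} or the descent setup of \cite[\S 4]{Brand}) transports verbatim into $\Ind\cC$, which is where the hypotheses {\bf MN} and {\bf GR} and the results of \cite{ComAlg} on faithful flatness do the real work. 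The second, more bookkeeping, subtlety is checking that the $H$-action on $C=\Gamma(U,p_\ast\cO_G)$ and the comparison with the global $\cO(H)$-coaction are compatible across the affine cover, so that the local identifications glue to the stated global isomorphism with $\Ind^G_H M$.
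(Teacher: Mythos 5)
Your proposal is correct and follows essentially the same route as the paper: part (2) by taking global sections of the defining equaliser, and part (1) by restricting to an affine open $U$ (using that $p$ is affine and faithfully flat by Proposition~\ref{QuoFP}) and invoking the faithfully flat descent/base-change isomorphism $\cO_G(p^{-1}U)\otimes_{\cO_{G/H}(U)}\cL(M)(U)\simeq\cO_G(p^{-1}U)\otimes M$, which is exactly the Jantzen-style argument the paper cites (\cite[Proposition~I.5.9(a)]{Jantzen}), with faithfulness and exactness then following because faithfully flat base change is exact, faithful, and reflects exactness, and exactness of sequences of quasi-coherent sheaves is checked locally.
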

\begin{proof}
Part (2) follows immediately from the definitions.

To prove that $\cL$ is faithful, it suffices to prove that the functor
$$\Rep^\infty_{\cC}H\to \Mod_{\cC}\cO_{G/H}(U),\quad M\to \cL(M)(U)$$ is a faithful functor for some affine open $U\subset X$. By Proposition~\ref{QuoFP}, $p:G\to G/H$ is affine, $p^{-1}(U)$ is then affine and, exactly as in the proof of \cite[Proposition~I.5.9(a)]{Jantzen}, we can prove
$$\cO_G(p^{-1}U)\otimes_{\cO_{G/H}(U)}\cL(M)(U)\;\simeq\; \cO_G(p^{-1}U) \otimes M$$
which shows faithfulness.

By Proposition~\ref{QuoFP}, $\cO_{G/H}(U)\to \cO_G(p^{-1}(U))$ is faithfully flat, so the above also shows that $\cL$ is exact.
\end{proof}

\subsection{Actions on schemes}
We let $\bX=(X,\cO)\in\Sch\cC$.
\subsubsection{}

Consider an affine group scheme $G$ in $\cC$ with action $a:G\times \bX\to\bX$ (satisfying the usual relations). 
For $x\in X(\unit)$, we have the corresponding morphism $a_x:G\to \bX$.

Denote by $G_x<G$ the subgroup functor which sends $A\in \Alg\cC$ to the equaliser of
$$G(A)\rightrightarrows \bX(A),$$ 
where one maps comes from $a_x$ and the other from $G\to \ast\xrightarrow{x} \bX$. Note that $G_x$ is a subgroup since we can realise it as the fibre product $G\times_X\ast$.

\begin{example}\label{ExStab} Consider a monomorphism $\alpha:\unit\to V$ in $\cC$ for a $G$-representation on $V$.
\begin{enumerate}
\item The stabiliser subgroup $G_\alpha<G$, see \cite[7.4.2]{ComAlg}, is an example of the above construction, for $\bX=\mA_V:=\bSpec(\Sym V^\vee)$ and $x\in \mA_V(\unit)\simeq\Hom(V^\vee,\unit)$ corresponding to $\alpha:\unit\to V$.
\item Assume that $\mP_V$ is represented by a scheme $\bX$. The transporter subgroup $G_U<G$, see \cite[7.4.3]{ComAlg}, for $U=\im\alpha\subset V$, is an example of the above construction, for $x=U\in\mP_V(\unit)$, in the interpretation of Remark~\ref{RemPX}.
\end{enumerate}
\end{example}

\subsubsection{}
The action $a$ induces group homomorphisms
\begin{equation}\label{3G1}
G(\unit)\to \Aut(\bX),\quad G(\unit)\to\Aut(X)\quad\mbox{and}\quad G(\unit)\to\Aut(\bX(\unit)),
\end{equation}
with the second is obtained from the first, and the third is directly obtained from $a(\unit)$. It follows also immediately from the definitions that the action of $G(\unit)$ on $X$ restricts to the action on $\bX(\unit)\subset X$, see \eqref{X1}. In particular, the third actions takes values in the automorphisms of $\bX(\unit)$ as a topological space.


\begin{lemma}\label{LemXYGB}
Consider $\cC$-schemes $\bX,\bY$ equipped with $G$-actions and a $G$-equivariant morphism $f:\bX\to \bY$. Assume that the only open $U\subset X$ which are $G(\unit)$-invariant are $\varnothing$ and $X$. If there exists an open $B\subset Y$, with $\varnothing\not=f^{-1}(B)$ such that $\bX|_{f^{-1}(B)}\to \bY|_B$ is an immersion, then $f$ is an immersion.
\end{lemma}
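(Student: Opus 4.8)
The plan is to use the $G(\unit)$-action to translate the given immersion over $B$ into an immersion over each $G(\unit)$-translate of $B$, thereby producing a $G(\unit)$-invariant open cover of $X$ to which Lemma~\ref{LemImmUnion} can be applied.

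First I would record the automorphisms produced by the action. For each $g\in G(\unit)$ let $\sigma_g\in\Aut(\bX)$ and $\tau_g\in\Aut(\bY)$ denote the scheme automorphisms coming from the first homomorphism in~\eqref{3G1} applied to $\bX$ and to $\bY$ respectively, so that $g\mapsto\sigma_g$ and $g\mapsto\tau_g$ are group homomorphisms. Since $f$ is $G$-equivariant it is in particular equivariant for these $G(\unit)$-actions, giving the identity $f\circ\sigma_g=\tau_g\circ f$ of morphisms of $\cC$-schemes. Setting $B_g:=\tau_g(B)\subset Y$, which is open, this identity yields $f^{-1}(B_g)=\sigma_g\bigl(f^{-1}(B)\bigr)=:U_g$, a non-empty open subset of $X$ (each $\sigma_g$ is a homeomorphism and $f^{-1}(B)\neq\varnothing$).

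Next I would verify that each restriction $f:\bX|_{U_g}\to\bY|_{B_g}$ is again an immersion. Indeed $\sigma_g$ restricts to an isomorphism $\bX|_{f^{-1}(B)}\xrightarrow{\sim}\bX|_{U_g}$ and $\tau_g$ to an isomorphism $\bY|_{B}\xrightarrow{\sim}\bY|_{B_g}$, and the square relating the two restrictions of $f$ commutes by the equivariance identity. As the class of immersions is stable under pre- and post-composition with isomorphisms, the hypothesis that $\bX|_{f^{-1}(B)}\to\bY|_{B}$ is an immersion propagates to each $\bX|_{U_g}\to\bY|_{B_g}$. Finally, the union $W:=\bigcup_{g\in G(\unit)}U_g$ is open and non-empty (it contains $U_e=f^{-1}(B)$), and it is $G(\unit)$-invariant because $\sigma_h(U_g)=\sigma_{hg}(f^{-1}(B))=U_{hg}$; by the standing hypothesis the only such subset is $X$ itself, so $W=X$. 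We have thus exhibited a family $\{B_g\subset Y\}$ with $X=\bigcup_g f^{-1}(B_g)$ and each $\bX|_{f^{-1}(B_g)}\to\bY|_{B_g}$ an immersion, and Lemma~\ref{LemImmUnion} concludes that $f$ is an immersion.

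The proof is essentially bookkeeping organised around the equivariance identity $f\circ\sigma_g=\tau_g\circ f$, and the only point requiring a moment's care is the stability of the immersion property under conjugation by the isomorphisms $\sigma_g,\tau_g$ (together with the compatibility $f^{-1}(\tau_g B)=\sigma_g f^{-1}(B)$, obtained by inverting the equivariance identity). I do not anticipate a genuine obstacle beyond ensuring the $G(\unit)$-action is used at the level of scheme automorphisms rather than merely on $\unit$-points.
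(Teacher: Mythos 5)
Your proof is correct and takes essentially the same approach as the paper's: translate the given immersion by the $G(\unit)$-action via the equivariance identity $f\circ\sigma_g=\tau_g\circ f$, note that the union of the translates $\sigma_g(f^{-1}(B))$ is a non-empty open $G(\unit)$-invariant subset and hence all of $X$, and conclude with Lemma~\ref{LemImmUnion}. The paper's proof is precisely this argument, stated more tersely; your version merely fills in the bookkeeping (the preimage compatibility and the stability of immersions under composition with isomorphisms) that the paper leaves implicit.
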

\begin{proof}
Since $f$ is $G$-equivariant, it follows that $\bX|_{g(f^{-1}(B))}\to \bY|_{g(B)}$ is also an immersion, for every $g\in G(\unit)$. Since the union of $\{g(f^{-1}(B))\,|\, g\in G(\unit)\}$, by assumption, must be $X$, Lemma~\ref{LemImmUnion} implies that $f$ is an immersion.
\end{proof}

\begin{lemma}\label{LemImm} Assume that $k$ is algebraically closed and $G$ is algebraic.
The morphism~$a_x:G\to\bX$ factors through a morphism
\begin{equation}\label{GxX}G/G_x\to \bX,\end{equation} which is an immersion
\end{lemma}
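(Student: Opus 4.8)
The plan is to obtain the factorisation \eqref{GxX} formally from the coequaliser presentation of $G/G_x$, to verify that the resulting map is a monomorphism, and then to promote a local immersion to a global one via the two immersion lemmas together with a homogeneity argument. First I would construct \eqref{GxX}. By Hypothesis~\ref{hypoQuo} the scheme $G/G_x$ is, through its functor of points, the sheafification of $A\mapsto G(A)/G_x(A)$ and sits in the coequaliser \eqref{EqCoeq}, namely $G\times G_x\rightrightarrows G\to G/G_x$ (multiplication and projection). To factor $a_x$ through it, it suffices by full faithfulness of $h$ (Theorem~\ref{CompThm}) to check on $A$-points that $a_x$ equalises the two maps; and indeed for $h\in G_x(A)$ one has $a_x(gh)=g\cdot(h\cdot x)=g\cdot x=a_x(g)$, since by definition $G_x(A)$ is precisely the stabiliser of $x$. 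This produces $\bar a_x\colon G/G_x\to\bX$.

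Next I would show $\bar a_x$ is a monomorphism in $\Sch\cC$. On the presheaf level, the map $gG_x(A)\mapsto g\cdot x$ from $G(A)/G_x(A)$ to $\bX(A)$ is injective for every $A$: if $g_1\cdot x=g_2\cdot x$ then $g_2^{-1}g_1$ stabilises $x$ and hence lies in $G_x(A)$. As sheafification is left exact, the induced map $h_{G/G_x}\to h_{\bX}$ is a monomorphism in $\Fais\cC$, and since $h$ is fully faithful (Theorem~\ref{CompThm}) it reflects monomorphisms, so $\bar a_x$ is a monomorphism. Because $G/G_x$ is algebraic (Hypothesis~\ref{hypoQuo}), Lemma~\ref{DisgustingLemma} then furnishes a nonempty open $B\subset X$ over which $\bar a_x$ restricts to an immersion.

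Finally I would upgrade this to a global immersion by $G$-equivariance, applying Lemma~\ref{LemXYGB} with $\bar a_x$ equivariant for left translation on $G/G_x$ and the action $a$ on $\bX$ (immediate on points). It remains to verify the hypothesis of that lemma: the only $G(\unit)$-invariant open subsets of the space underlying $G/G_x$ are $\varnothing$ and the whole space. Since $k$ is algebraically closed and $G/G_x$ is algebraic, Lemma~\ref{LemXfin} reduces this to the transitivity of the $G(\unit)$-action on the rational points $(G/G_x)(\unit)$. For this I would lift an arbitrary rational point $y\colon\bSpec\unit\to G/G_x$ along the faithfully flat quotient $p\colon G\to G/G_x$ (Proposition~\ref{QuoFP}): the pullback $G\times_{G/G_x}\bSpec\unit=\bSpec B$ is faithfully flat and of finite type over $\unit$ (its fibre is a $G_x$-torsor), so $B\neq 0$ is finitely generated and, as $k$ is algebraically closed, admits a simple quotient equal to $\unit$ exactly as in the proof of Lemma~\ref{LemXfin}. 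This yields $g\in G(\unit)$ with $p(g)=y$, so every rational point lies in the $G(\unit)$-orbit of the base point $\bar x$ (where $\bar a_x(\bar x)=x$), giving transitivity. Hence the invariant-open hypothesis holds and Lemma~\ref{LemXYGB} concludes that $\bar a_x$ is an immersion.

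The main obstacle is precisely this homogeneity step: showing that $G(\unit)$ acts transitively on $(G/G_x)(\unit)$, i.e.\ that rational points lift along the faithfully flat quotient map. This is the only place where the hypotheses ``$k$ algebraically closed'' and ``$G$ algebraic'' are genuinely needed, and it is what makes the local immersion of Lemma~\ref{DisgustingLemma} propagate over the whole of $G/G_x$.
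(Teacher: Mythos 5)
Your proof is correct, and its overall skeleton coincides with the paper's: factor $a_x$ through the quotient by universality, observe that $G/_0G_x\to\bX$ is a monomorphism at the presheaf level and that sheafification preserves this, invoke Lemma~\ref{DisgustingLemma} (using algebraicity of $G/G_x$ from Hypothesis~\ref{hypoQuo}) to get an immersion over some nonempty open, and then globalise via Lemma~\ref{LemXYGB}. Where you genuinely diverge is in verifying the hypothesis of Lemma~\ref{LemXYGB}, i.e.\ that the only $G(\unit)$-stable opens of the space underlying $G/G_x$ are trivial. You prove outright that $G(\unit)$ acts transitively on $(G/G_x)(\unit)$, by lifting a rational point along the faithfully flat affine quotient $p:G\to G/G_x$: the fibre algebra $B=\cO_G(p^{-1}U)\otimes_{\cO_{G/G_x}(U)}\unit$ is nonzero (faithful flatness) and finitely generated (quotient of $\cO_G(p^{-1}U)$, which is finitely generated since $G$ is algebraic), hence has a simple quotient $\unit$ over the algebraically closed $k$, giving a lift $g\in G(\unit)$. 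This is a valid argument, and it yields genuine homogeneity of $G/G_x$ on rational points as a by-product. The paper avoids this work entirely with a reduction trick: since $p$ is surjective on topological spaces (Proposition~\ref{QuoFP}) and equivariant, any $G(\unit)$-stable open of $G/G_x$ pulls back to a $G(\unit)$-stable open of $G$ itself, and on $G$ the claim is immediate from Lemma~\ref{LemXfin} together with transitivity of left translation of $G(\unit)$ on $G(\unit)$. So your route costs an extra lifting argument (which is where you correctly identify the hypotheses ``$k$ algebraically closed'' and ``$G$ algebraic'' entering, though the paper needs them too, just applied to $G$ rather than to $G/G_x$), while the paper's reduction is shorter but establishes only the weaker invariant-open statement rather than transitivity.
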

\begin{proof}
That $a_x$ factors through the quotient follows from universality, and \eqref{GxX} is clearly $G$-equivariant.

We claim that there are no non-trivial $G(\unit)$-stable open subsets on the underlying space of $G/G_x$. 
Since $G\to G/G_x$ is surjective, see Proposition~\ref{QuoFP}, it suffices to prove this for the action of $G(\unit)$ on $G$ itself, rather than on $G/G_x$. The latter now follows from Lemma~\ref{LemXfin} and compatibility of the actions in \eqref{3G1}.

By Lemma~\ref{LemXYGB}, it thus suffices to prove that $G/G_x$ restricts to an immersion on some open $B\subset X$ which restricts to a nonempty subset on $G/G_x$.

Clearly $G/_0G_x\to\bX$ is a mononomorphism in $\Fun\cC$. By the usual form of the sheafification functor, it follows that $G/_1 G_x=G/G_x\to\bX$ is still a monomorphism (in $\Fais\cC$ or $\Fun\cC$). The conclusion thus follows from Lemma~\ref{DisgustingLemma}.
\end{proof}

\subsection{Observability}

We can now extend \cite[Theorem~A.1.1]{ComAlg} from $\Vecc$ to arbitrary {\bf MN} and {\bf GR} tensor categories $\cC$ satisfying Hypothesis~\ref{hypoQuo}. Indeed, the proof carries over verbatim, where we need to use \autoref{PropQA}, \autoref{LemImm} and \autoref{ThmL} in lieu of the corresponding classical results.
\begin{theorem}\label{ThmOb}
Let $k$ be algebraically closed, and consider an algebraic group $G$ in $\cC$ (which is {\bf MN}, {\bf GR} and satisfies Hypothesis~\ref{hypoQuo}) with subgroup $H<G$. 
\begin{enumerate}
\item The following conditions are equivalent on $H<G$.
\begin{enumerate}
\item Every representation of $H$ extends to one of $G$;
\item The geometric induction functor $\Ind^G_H$ is faithful;
\item The subgroup $H<G$ is the intersection of a family of stabiliser subgroups;
\item The inclusion $H\to G$ is a regular monomorphism in the category of affine group schemes in $\cC$.
\end{enumerate}
Furthermore, if $G$ is of finite type, then these properties are equivalent to
\begin{enumerate}
\item[(e)] The quotient $\cC$-scheme $G/H$ is quasi-affine.
\end{enumerate}
\item There exists a unique subgroup $H<H'<G$ for which $H\to H'$ is an epimorphism and $H'< G$ satisfies conditions (a)-(d) in part (1).
\end{enumerate}
\end{theorem}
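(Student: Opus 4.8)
Following the author's indication, the plan is to transport the proof of \cite[Theorem~A.1.1]{ComAlg}, established there for $\cC=\Vecc$, to the present setting, substituting Proposition~\ref{PropQA}, Lemma~\ref{LemImm} and Theorem~\ref{ThmL} for the three places where the classical argument appeals to algebraic geometry. The backbone is Frobenius reciprocity $\Res^G_H\dashv\Ind^G_H$, which exhibits $\Ind^G_H$ as a right adjoint, together with the factorisation $\Ind^G_H=\Gamma\circ\cL$ and the faithfulness and exactness of $\cL$ from Theorem~\ref{ThmL}. For part~(1) I would first prove the equivalences $(a)\Leftrightarrow(b)\Leftrightarrow(c)\Leftrightarrow(d)$ in general, and then splice in $(e)$ under the finite type hypothesis through the chain $(a)\Rightarrow(e)\Rightarrow(b)$.

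The equivalence $(a)\Leftrightarrow(b)$ is formal: a right adjoint is faithful exactly when its counit $\Res^G_H\Ind^G_HM\to M$ is an epimorphism for every $M\in\Rep^\infty_{\cC}H$, and dualising these epimorphisms via rigidity says precisely that every representation of $H$ embeds into the restriction of a representation of $G$, which is $(a)$. For $(a)\Rightarrow(c)$ I would realise $H$ as the common stabiliser of a family of vectors $\alpha_i\colon\unit\to V_i$ inside $G$-representations, as in the Chevalley-type argument; the converse $(c)\Rightarrow(a)$ is immediate, since such $H$-fixed vectors furnish the required embeddings. The equivalence $(c)\Leftrightarrow(d)$ is purely categorical: by Example~\ref{ExStab} each stabiliser $G_{\alpha}$ is the equaliser of the two maps $G\to\mA_V$ sending $g$ to $g\cdot\alpha$ and to $\alpha$, hence a regular monomorphism, and an intersection of a family of equalisers is again an equaliser into the product; conversely a regular monomorphism is read back as such a stabiliser condition after choosing a faithful representation.

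The geometric core is the finite type equivalence with $(e)$. For $(a)\Rightarrow(e)$ I would use that $G$ of finite type is noetherian (see~\ref{ConsMNGR}), so that the intersection of stabilisers from $(c)$ stabilises after finitely many terms; collecting these into a single vector $\alpha\colon\unit\to V:=\bigoplus_iV_i$ gives $H=G_{\alpha}$. Lemma~\ref{LemImm} then provides an immersion $G/H=G/G_{\alpha}\to\mA_V$ into the affine scheme $\mA_V=\bSpec(\Sym V^\vee)$, and since a locally closed subscheme of an affine scheme is quasi-affine, $(e)$ follows. Conversely, for $(e)\Rightarrow(b)$ I would combine Proposition~\ref{PropQA}(3)---which for the algebraic (Hypothesis~\ref{hypoQuo}), hence quasi-compact, scheme $G/H$ gives that $\Gamma$ is faithful on $\QCoh_{\cC}(G/H)$---with the faithfulness of $\cL$ in Theorem~\ref{ThmL}; their composite $\Gamma\circ\cL=\Ind^G_H$ is then faithful, which is $(b)$, closing the cycle.

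For part~(2) the plan is to take $H'$ to be the smallest observable subgroup of $G$ containing $H$, realised as the intersection of the family of all such subgroups. This family is non-empty, since $G$ itself satisfies $(a)$, and is closed under arbitrary intersections by characterisation $(c)$, an intersection of families of stabilisers again being a family of stabilisers; hence $H'$ exists and is observable. That the inclusion $H\to H'$ is an epimorphism of affine group schemes, and that $H'$ is the unique observable subgroup of $G$ in which $H$ is epimorphically embedded, I would then establish exactly as in the observable-hull argument of \cite[Theorem~A.1.1]{ComAlg}: minimality of $H'$ forces the epimorphic hull of $H$ inside $H'$ to coincide with $H'$, and uniqueness follows by comparing any competitor with this minimal choice. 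I expect the main obstacle to lie in the finite type step $(a)\Rightarrow(e)$, where one must pass from an \emph{a priori} infinite intersection of stabilisers to a single one using noetherianity and then verify that the immersion of Lemma~\ref{LemImm} lands in an affine scheme; the remaining implications are either formal consequences of the adjunction and rigidity or categorical manipulations that transport verbatim once the three cited results are in hand.
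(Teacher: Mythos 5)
Your overall plan is the one the paper intends: transport the proof of \cite[Theorem~A.1.1]{ComAlg}, inserting Proposition~\ref{PropQA}, Lemma~\ref{LemImm} and Theorem~\ref{ThmL} at the three geometric steps. Your handling of those steps is correct and is exactly where the substitutions are meant to act: from (c), noetherianity (via \ref{ConsMNGR} and Hypothesis~\ref{hypoQuo}) reduces to a single stabiliser $H=G_\alpha$ with $\alpha:\unit\to V$, Lemma~\ref{LemImm} gives an immersion $G/G_\alpha\to\mA_V$, so $G/H$ (being algebraic, hence quasi-compact) is quasi-affine by Proposition~\ref{PropQA}; and conversely Proposition~\ref{PropQA}(3) together with the faithfulness of $\cL$ in Theorem~\ref{ThmL} makes $\Ind^G_H=\Gamma\circ\cL$ faithful.

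The genuine gap is your claim that $(c)\Rightarrow(a)$ ``is immediate, since such $H$-fixed vectors furnish the required embeddings.'' It is not: the morphisms $\alpha_i:\unit\to V_i$ with $H=\bigcap_i G_{\alpha_i}$ are particular $H$-invariants in particular $G$-representations, and they say nothing directly about an \emph{arbitrary} representation $M$ of $H$; producing an embedding of $M$ into a restricted $G$-representation is precisely the hard content of the theorem, and it is obtained by passing through quasi-affineness and faithfulness of induction (i.e.\ through the chain $(c)\Rightarrow(e)\Rightarrow(b)\Rightarrow(a)$), not by inspection. Since you organise part (1) as ``prove $(a)\Leftrightarrow(b)\Leftrightarrow(c)\Leftrightarrow(d)$ in general, then splice in $(e)$ under finite type,'' this faulty implication is load-bearing in your cycle. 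When $G$ is of finite type your own steps repair it --- reorganise the cycle as $(a)\Rightarrow(c)\Rightarrow(e)\Rightarrow(b)\Rightarrow(a)$, with $(c)\Leftrightarrow(d)$ attached --- so there the damage is only organisational. But the theorem singles out finite type only for $(e)$, so if $(a)$--$(d)$ are to be equivalent beyond that case you have no route from $(c)$ back to $(a)$ or $(b)$ avoiding $(e)$; that implication requires an actual argument (the fixed-vector/epimorphic-hull mechanism used in \cite[Theorem~A.1.1]{ComAlg}, which is also what makes your sketch of part (2) run), not the one-line dismissal you give.
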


Similarly, \cite[Theorem~A.2.1]{ComAlg} extends to our setting:

\begin{theorem}\label{ThmStrong}
Let $k$ be algebraically closed, and consider an algebraic group $G$ in $\cC$ with subgroup $H<G$. The following properties are equivalent:
\begin{enumerate}
\item The subgroup $H<G$ is exact;
\item The subgroup $H<G$ is exact and observable;
\item The subgroup $H_{\red}<G_{\red}$ is exact;
\item The quotient $\cC$-scheme $G/H$ is affine.
\end{enumerate}
\end{theorem}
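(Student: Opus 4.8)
The plan is to follow the proof of the classical affineness criterion \cite[Theorem~A.2.1]{ComAlg}, substituting the homogeneous induction functor of Theorem~\ref{ThmL}, the affineness of the quotient morphism from Proposition~\ref{QuoFP}, and the quasi-affine/observability dictionary of Theorem~\ref{ThmOb}. The cheap implications come first. The implication $(2)\Rightarrow(1)$ is trivial. For $(4)\Rightarrow(1)$, note that by Theorem~\ref{ThmL}(2) we have $\Ind^G_H=\Gamma\circ\cL$, where $\cL$ is exact by Theorem~\ref{ThmL}(1); if $G/H$ is affine, then by Proposition~\ref{PropQCoh2}(2) the functor $\Gamma$ is an equivalence, hence exact, so $\Ind^G_H$ is exact and $H<G$ is exact. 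Since an affine scheme is in particular quasi-affine, the equivalence of conditions (a) and (e) in Theorem~\ref{ThmOb} (applicable as $G$ is algebraic) shows that $H$ is moreover observable, giving $(4)\Rightarrow(2)$.

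For the equivalence with $(3)$ I would first establish $(G/H)_{\red}\simeq G_{\red}/H_{\red}$, using that $k$ is algebraically closed and comparing functors of points on simple algebras (over a field, a map factors through the reduced subgroup, so $G/_0H$ and $G_{\red}/_0H_{\red}$ agree on simple algebras, and both reduced quotients are algebraic $\cC$-schemes by Hypothesis~\ref{hypoQuo}). Granting this, affineness is insensitive to the nilpotent thickening: a closed subscheme of an affine scheme is affine, so $(4)$ forces $(G/H)_{\red}=G_{\red}/H_{\red}$ affine; conversely, since $G/H$ is algebraic its nilradical is a nilpotent quasi-coherent ideal sheaf, so Lemma~\ref{LemAffNil} turns affineness of $(G/H)_{\red}$ back into affineness of $G/H$. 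Combining this with the equivalence $(1)\Leftrightarrow(4)$ applied both to $(G,H)$ and to the reduced pair $(G_{\red},H_{\red})$ yields $(3)\Leftrightarrow(1)$.

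Thus everything reduces to the crux, the implication $(1)\Rightarrow(4)$: exactness of $\Ind^G_H$ forces $G/H$ to be affine. Here I would exploit the faithfully flat affine cover $p:G\to G/H$ of Proposition~\ref{QuoFP}, together with the identification $G\times_{G/H}G\simeq G\times H$ from \ref{DefQuo1}: since $G$ is affine and $G\times H$ is again affine, faithfully flat descent along $p$ identifies $\QCoh_{\cC}(G/H)$ with the category $\QCoh_{\cC}^{H}(G)$ of $H$-equivariant quasi-coherent sheaves on $G$, under which $\Gamma(G/H,-)$ becomes the functor of $H$-invariant global sections and the homogeneous sheaf $\cL(M)$ corresponds to the equivariant sheaf $\cO_G\otimes M$. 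Exactness of $\Ind^G_H=\Gamma\circ\cL$ on $\Rep^\infty_{\cC}H$ then propagates, by resolving an arbitrary equivariant sheaf by such induced ones, to exactness of $\Gamma(G/H,-)$ on all of $\QCoh_{\cC}(G/H)$. Combined with the generator $\cO\otimes\cG$ of $\QCoh_{\cC}(G/H)$ furnished (once $\Gamma$ is faithful) by the argument of Proposition~\ref{PropQA}, $(3)\Rightarrow(5)$, this shows that $\Gamma:\QCoh_{\cC}(G/H)\to\Mod_{\cC}A$, with $A=\Gamma(G/H,\cO)$, is an equivalence, whence $G/H\simeq\bSpec A$ is affine.

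The main obstacle is precisely this last step: upgrading exactness of $\Ind^G_H$ on \emph{representations} of $H$ to affineness of $G/H$, i.e.\ controlling \emph{all} quasi-coherent sheaves rather than only the $\cL(M)$. Classically this is the content of the Serre-type cohomological criterion, and in the present setting it must be repackaged as the statement that the induced sheaves $\cO_G\otimes M$ generate, and are acyclic for, $\Gamma$ on $\QCoh_{\cC}(G/H)$. Establishing the requisite descent equivalence $\QCoh_{\cC}(G/H)\simeq\QCoh_{\cC}^{H}(G)$ and the converse to Proposition~\ref{PropQCoh2}(2) — that $\Gamma$ being an equivalence detects affineness — is where the real work lies, even though each ingredient mirrors a classical one and the bookkeeping is handled by Theorem~\ref{ThmL} and Proposition~\ref{QuoFP}.
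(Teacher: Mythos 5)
Your high-level decomposition (cheap implications via Theorem~\ref{ThmL}, Proposition~\ref{PropQCoh2}(2) and Theorem~\ref{ThmOb}, with $(1)\Rightarrow(4)$ as the crux) has the right shape; the paper itself proves the theorem by asserting that the proof of \cite[Theorem~A.2.1]{ComAlg} carries over verbatim once Proposition~\ref{PropQA}, Lemma~\ref{LemImm} and Theorem~\ref{ThmL} are substituted for the classical ingredients. However, two steps of your proposal are genuinely defective. First, your route to condition (3) rests on the isomorphism $(G/H)_{\red}\simeq G_{\red}/H_{\red}$, and this is false in positive characteristic, already for $\cC=\Vecc$ (which is {\bf MN}, {\bf GR} and satisfies Hypothesis~\ref{hypoQuo}): take $G=SL_p$ and $H=\mu_p$ in characteristic $p$; then $G_{\red}/H_{\red}=SL_p$ while $(G/H)_{\red}=PGL_p$, and these are not isomorphic (even as varieties, e.g.\ their Picard groups differ); the natural comparison map $G_{\red}/H_{\red}\to (G/H)_{\red}$ is a degree-$p$ purely inseparable isogeny, not an isomorphism. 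Your justification --- agreement of the presheaf quotients on simple algebras --- cannot detect this: the presheaves $G/_0H$ and $G_{\red}/_0H_{\red}$ do agree on fields, but their sheafifications do not (sheafification changes field-valued points over imperfect fields), and in any case reduced algebraic schemes with bijective points over an algebraically closed field need not be isomorphic, precisely because of Frobenius phenomena. What is true, and what a correct argument must use, is that the comparison map is finite and surjective, so that affineness transfers in both directions by a Chevalley-type theorem together with Lemma~\ref{LemAffNil}; establishing that finiteness/surjectivity and the descent of affineness along finite surjections is additional real work that your proposal silently replaces by a false identification.

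Second, for the crux $(1)\Rightarrow(4)$ you give a plan rather than a proof, and you say so yourself (``where the real work lies''). The statements your plan rests on --- the descent equivalence between $\QCoh_{\cC}(G/H)$ and $H$-equivariant quasi-coherent sheaves on $G$, the propagation of exactness from the induced sheaves $\cL(M)$ to all quasi-coherent sheaves, and the converse to Proposition~\ref{PropQCoh2}(2), namely that an exact and faithful $\Gamma$ forces the open immersion $\bX\to\bSpec A$ of Proposition~\ref{PropQA} to be an isomorphism --- are exactly the content of the cited proof of \cite[Theorem~A.2.1]{ComAlg}, and none of them is established in your text. There is moreover a circularity: to invoke the generator argument of Proposition~\ref{PropQA} you need $\Gamma$, equivalently $\Ind^G_H$, to be \emph{faithful}, i.e.\ $H$ to be observable; but exactness of a functor does not formally imply its faithfulness, and ``exact $\Rightarrow$ observable'' is precisely the nontrivial content of $(1)\Rightarrow(2)$ that the theorem asserts. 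As written, the proposal therefore assumes a form of its conclusion at the key step, and the gap is exactly the material that the paper's citation is meant to supply.
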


\begin{remark}
Since all assumptions on $\cC$ are valid for $\cC=\sVec$, Theorems~\ref{ThmOb} and~\ref{ThmStrong} apply to supergroups. Theorem~\ref{ThmStrong} for supergroups was obtained in~\cite[Theorem~5.2]{Zu}, while Theorem~\ref{ThmOb} for supergroups appears to be new.
\end{remark}


\section{Examples}\label{SecEx}

In this section, we give some examples of geometric objects in $\cC_1:=\Ver_4^+$. In particular, we let $k$ be a field of characteristic 2. It is easy to verify that $\cC_1$ is {\bf GR} and {\bf MN}, see also \cite[Theorems~9.2.1 and~9.3.7]{CEO}.

\subsection{Notation}

\subsubsection{} 
We consider the Hopf algebra $k[D]/D^2$ where $D$ is primitive.
Recall from \cite{Ve} the tensor category $\cC_1=\Ver_4^+$ given by the monoidal category of finite dimensional $k[D]/D^2$-modules, with symmetric braiding given by
$$V\otimes W\;\xrightarrow{\sim}\; W\otimes V,\quad v\otimes w\mapsto w\otimes v+(Dw)\otimes (Dv).$$

We will denote by $\Omega:\cC_1\to\Vecc$ the (non-symmetric) monoidal forgetful functor and denote by $P\simeq k[D]/D^2$ the indecomposable projective object in $\cC_1$. We choose a basis $\{v_1,v_2\}$ of $\Omega P$ so that $v_2$ spans the socle and $Dv_1=v_2$.

\subsubsection{Algebras}\label{RelVer4}An object $A\in \Alg\cC_1$ corresponds to an ordinary (non-commutative) $k$-algebra $R:=\Omega(A)$, equipped with
$$D=D_R\in \End_k(R),$$ with $D^2=0$,
such that (by primitivity of $D$) 
\begin{equation}
\label{Leibniz} D(ab)\;=\; D(a)b+aD(b)
\end{equation}
and (by commutativity of $A$ in $\Ind\cC_1$)
$$ab+ba\;=\; (Da)(Db),$$
for all $a,b\in R$. In particular, we have the following useful relations
$$D(a^2)=0=(Da)^2.$$

\begin{example}\label{SymP}
The algebra $\Sym P$ in $\Alg\cC_1$ has (commutative) underlying $k$-algebra
$$k[x,y]/y^2$$
with action of $D$ determined by \eqref{Leibniz} and $Dx=y$.
\end{example}

\subsection{An affine projective space}

\subsubsection{} We define the algebra $T\in \Alg\cC_1$ with underlying $k$-algebra $k[t]/t^4$ and with action of $D$ determined by
$$D(t)\;=\; t^2.$$
We can present this algebra as $(\Sym P)/(y-x^2)$.

Denote by $\cL_u$ the $T$-module such that the $k[t]/t^4$-module $\Omega(\cL_u)$ is the regular module, but with action of $D$ given by
$$D(1)=t,\quad D(t^2)=t^3.$$
This is clearly an invertible (in fact involutive) $T$-module, and we consider the element of~$\mP_P(T)$ determined (using $P^\ast\simeq P$) by
$$T\otimes P\;\tto\; \cL_u,\quad 1\otimes v_1\mapsto 1.$$

This elements corresponds (by the Yoneda lemma) to a natural transformation $\Phi_T\to \mP_P$.

\begin{prop}\label{PropProjAff}
The above yields an isomorphism in $\Fais\cC$
$$\Phi_{T}\;\simeq\;\mP_P.$$
In other words, $\mP_P$ is the affine $\cC$-scheme $\bSpec T$.
\end{prop}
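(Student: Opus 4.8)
The plan is to show that the natural transformation $\phi\colon\Phi_T\to\mP_P$ attached to the universal element $[T\otimes P\tto\cL_u]$ is an isomorphism, by reducing to Lemma~\ref{LemTech}. First I would describe $\phi$ explicitly on points. Using the presentation $T=(\Sym P)/(y-x^2)$ and the identification of an algebra map out of $\Sym P$ with a morphism $P\to A$ in $\cC_1$, i.e.\ with its value $a\in\Omega A$ on $v_1$ (so that $v_2\mapsto Da$), one gets
$$\Phi_T(A)\;=\;\{\,a\in\Omega A\mid Da=a^2\,\},$$
the identity $a^4=0$ being automatic from $(Da)^2=0$. For such an $a$ the element $\phi_A(a)$ is the pushout $\cL_a:=A\otimes_T\cL_u$ with its surjection $A\otimes P\tto\cL_a$ (via $P^\ast\simeq P$), and unwinding the $D$-action on $\cL_u$ shows that the image $\bar m_a$ of $1\otimes v_1$ generates $\cL_a$ and satisfies the key relation $D\bar m_a=a\,\bar m_a$.

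To invoke Lemma~\ref{LemTech} for $F=\Phi_T$ and $G=\mP_P$ I would verify its hypotheses. Since $T$ is finitely presented, $\Phi_T$ commutes with direct limits; and since $P$ has finite length, an element of $\mP_P$ is finite data (a submodule of $A\otimes P$, see Remark~\ref{RemPX}), from which one checks that $\varinjlim_\alpha\mP_P(A_\alpha)\to\mP_P(\varinjlim_\alpha A_\alpha)$ is injective for every directed system. It then remains to prove that $\phi$ is a monomorphism and restricts to a bijection on every local algebra. For the monomorphism property, suppose $a,b\in\Phi_T(A)$ satisfy $\phi_A(a)\simeq\phi_A(b)$. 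An isomorphism of the two surjections sends $\bar m_a$ to $\bar m_b$, so transporting $D\bar m_a=a\,\bar m_a$ and comparing with the intrinsic relation $D\bar m_b=b\,\bar m_b$ gives $(a-b)\bar m_b=0$. As $\bar m_b$ generates the invertible module $\cL_b$, its annihilator vanishes (tensoring with $\cL_b^{-1}$ yields $(a-b)\cdot 1_A=0$), so $a=b$ and $\phi_A$ is injective for every $A$.

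The heart of the argument, and the step I expect to be the main obstacle, is surjectivity of $\phi_A$ for $A$ local. Given $[A\otimes P\tto\cL]\in\mP_P(A)$ with $\cL$ invertible, set $\bar m=q(1\otimes v_1)$. Reducing modulo the maximal ideal $\fm$ and using that the residue field is $\unit$-isotypic (so $D$ acts by $0$ there and kills the image of $D\bar m=q(1\otimes v_2)$), one finds that $\bar m$ generates $\cL/\fm\cL$; Nakayama's lemma, available because $\cC_1$ is {\bf MN} and {\bf GR}, then shows $\cL$ is cyclic on $\bar m$. Hence $D\bar m=a\,\bar m$ for a unique $a\in\Omega A$, and applying $D$ again with $D^2=0$ and the Leibniz rule gives $(Da+a^2)\bar m=0$, so $Da=a^2$ by the vanishing annihilator; matching generators then identifies the given element with $\phi_A(a)$. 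The difficulty is precisely this local classification: because the Picard group of a local algebra in $\cC_1$ is genuinely nontrivial (already $\cL_u$ is a nontrivial invertible $T$-module), one cannot simply trivialise $\cL$, and must instead argue through the concrete description of $A$-modules as $\Omega A$-modules equipped with a compatible square-zero operator $D$, using faithfulness of $\Omega$ to read off and reconstruct the datum $a$. With injectivity and local surjectivity in hand, Lemma~\ref{LemTech} delivers the isomorphism $\Phi_T\simeq\mP_P$.
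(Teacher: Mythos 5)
Your overall architecture coincides with the paper's: the same description $\Phi_T(A)=\{a\in\Omega A\mid Da=a^2\}$, reduction via Lemma~\ref{LemTech} (with the direct-limit hypotheses and injectivity), and verification that $\phi$ is a monomorphism plus a bijection on local algebras. Your monomorphism argument is fine (the paper leaves this as ``clear''). However, there is a genuine gap exactly at the step you yourself flagged as the heart of the matter: the claim that ``Nakayama's lemma, available because $\cC_1$ is {\bf MN} and {\bf GR}, shows $\cL$ is cyclic on $\bar m$.'' The Nakayama lemma supplied by \ref{ConsMNGR} lives in the tensor category: it concerns $A$-submodules of $\cL$ in $\Ind\cC_1$, and any such subobject is $D$-stable. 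The $\Omega A$-span of $\bar m$ is \emph{not} $D$-stable a priori, so it is not a submodule to which that lemma can be applied; and the categorical submodule generated by $\bar m$ automatically contains $D\bar m$, hence equals the image of $q$, i.e.\ all of $\cL$ --- which is vacuous and does not yield the statement you actually need, namely $D\bar m\in\Omega A\cdot\bar m$. What is really required is a Nakayama-type statement over the \emph{noncommutative} ring $\Omega(A)$, i.e.\ that $\Omega(\fm)$ lies in its Jacobson radical (equivalently, that $1+\Omega(\fm)$ consists of units). This is precisely the paper's Lemma~\ref{LemLocVer4} (locality of $\Omega(A)$ for local $A$), whose proof rests on the squaring trick of \ref{CalcVer4}; it does not come for free from {\bf MN}/{\bf GR}. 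Once that ingredient is in place your argument closes: from $\cL/\fm\cL\simeq A/\fm$ with trivial $D$ one gets $D\bar m= u\bar m+vD\bar m$ with $u,v\in\Omega(\fm)$, and invertibility of $1-v$ gives $D\bar m=a\bar m$, after which your computation $(Da+a^2)\bar m=0$ and the identification with $\phi_A(a)$ go through. (The paper organizes this differently: it first uses Lemma~\ref{LemLocVer4} to conclude $\Omega(\cL)\simeq\Omega(A)$, so every invertible module over local $A$ is some $\cL_u^a(A)$, and then adjusts the generator by a unit via \ref{CalcVer4}.)

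A secondary, lesser issue: your justification of the injectivity of $\varinjlim_\alpha\mP_P(A_\alpha)\to\mP_P(\varinjlim_\alpha A_\alpha)$ --- ``an element of $\mP_P$ is finite data'' --- is not an argument as stated, since the submodules in Remark~\ref{RemPX} are large objects. The paper proves this as Lemma~\ref{ProjLim}, by showing that isomorphisms between the base-changed invertible modules (and the compatibility with the surjections from $A\otimes P$) descend to a finite stage, using that $\Hom(\unit,-)$ commutes with the relevant filtered colimits. This is fixable along those lines, but it does need to be done.
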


The remainder of this section is dedicated to the proof of the proposition.

\begin{lemma}\label{ProjLim}
For a directed system $A_\alpha$ in $\Alg\cC_1$, the map $\varinjlim_\alpha\mP_P(A_\alpha)\to\mP_P(\varinjlim_\alpha A_\alpha)$ is injective.
\end{lemma}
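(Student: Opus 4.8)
The plan is to exploit that everything takes place with the fixed finite-length object $P^\ast\simeq P$, so that the relevant modules are compact and morphisms descend through the filtered colimit $A:=\varinjlim_\alpha A_\alpha$. Since the $A_\alpha$ form a directed system, two elements of $\varinjlim_\alpha\mP_P(A_\alpha)$ with the same image in $\mP_P(A)$ may, after passing to a common index, be represented at a single stage $\alpha$ by surjections $p:A_\alpha\otimes P^\ast\tto\cL$ and $p':A_\alpha\otimes P^\ast\tto\cL'$ onto invertible $A_\alpha$-modules. Writing $A\otimes_{A_\alpha}(-)$ for base change to $A$, the hypothesis that these agree in $\mP_P(A)$ means precisely that there is an isomorphism of $A$-modules $\theta:A\otimes_{A_\alpha}\cL\xrightarrow{\sim}A\otimes_{A_\alpha}\cL'$ with $\theta\circ(A\otimes_{A_\alpha}p)=A\otimes_{A_\alpha}p'$. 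The goal is to produce, for some $\beta\geq\alpha$, an isomorphism $\theta_\beta:A_\beta\otimes_{A_\alpha}\cL\xrightarrow{\sim}A_\beta\otimes_{A_\alpha}\cL'$ satisfying the analogous identity, as this exactly says that the two elements already coincide in $\mP_P(A_\beta)$.

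The key point is that an invertible $A_\alpha$-module $\cL$ is a compact object of $\Mod_{\cC_1}A_\alpha$. Indeed, if $\cL^{-1}$ denotes its inverse, then for any $A_\alpha$-module $N$,
\[
\Hom_{A_\alpha}(\cL,N)\;\simeq\;\Hom_{A_\alpha}(A_\alpha,\,N\otimes_{A_\alpha}\cL^{-1})\;\simeq\;(N\otimes_{A_\alpha}\cL^{-1})^0,
\]
where the last identification uses that $A_\alpha\otimes(-)$ is left adjoint to the forgetful functor and that $\Hom(\unit,-)=(-)^0$. Since $-\otimes_{A_\alpha}\cL^{-1}$ is a left adjoint and $H^0$ commutes with direct limits, the functor $\Hom_{A_\alpha}(\cL,-)$ commutes with filtered colimits. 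Restricting along $A_\alpha\to A$ we have $A\otimes_{A_\alpha}\cL'\simeq\varinjlim_{\beta\geq\alpha}(A_\beta\otimes_{A_\alpha}\cL')$ in $\Mod_{\cC_1}A_\alpha$; viewing $\theta$ by adjunction as a morphism $\cL\to A\otimes_{A_\alpha}\cL'$ and applying compactness, it factors through $A_\beta\otimes_{A_\alpha}\cL'$ for some $\beta$, which by adjunction yields a morphism $\theta_\beta:A_\beta\otimes_{A_\alpha}\cL\to A_\beta\otimes_{A_\alpha}\cL'$ base-changing to $\theta$.

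It remains to check that $\theta_\beta$ can be taken to be an isomorphism compatible with $p,p'$. The free module $A_\beta\otimes P^\ast$ is also compact in $\Mod_{\cC_1}A_\beta$, since $P^\ast$ is compact in $\Ind\cC_1$ and $A_\beta\otimes(-)$ is left adjoint to the (filtered-colimit preserving) forgetful functor. Hence the identity $\theta\circ(A\otimes_{A_\alpha}p)=A\otimes_{A_\alpha}p'$, being an equality of morphisms out of $A\otimes P^\ast=\varinjlim_{\beta}(A_\beta\otimes P^\ast)$ into the filtered colimit $A\otimes_{A_\alpha}\cL'$, already holds at some finite stage; similarly, the inverse of $\theta$ descends to some $\psi:A_\gamma\otimes_{A_\alpha}\cL'\to A_\gamma\otimes_{A_\alpha}\cL$, and the relations $\theta_\beta\psi=\mathrm{id}$, $\psi\theta_\beta=\mathrm{id}$ descend, so that after enlarging the index $\theta_\beta$ becomes an isomorphism. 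This produces the required isomorphism in $\mP_P(A_\beta)$. I expect the main obstacle to be the compactness statement for invertible modules: once one knows that $\cL$ and $A_\beta\otimes P^\ast$ are compact, the descent of $\theta$, of the commutativity, and of invertibility is a routine manipulation of filtered colimits.
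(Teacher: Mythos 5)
Your proof is correct and follows essentially the same route as the paper: both arguments reduce to the fact that $\Hom$ out of an invertible module commutes with the filtered colimit, because tensoring with the inverse module reduces it to $\Hom(\unit,-)=H^0$, which commutes with direct limits, and then descend the isomorphism, its inverse, and the compatibility with the surjection from $A\otimes P^\ast$ to a finite stage. Your compactness phrasing and the paper's displayed chain of natural isomorphisms are the same mechanism; you merely spell out the descent of invertibility and of the commuting triangle that the paper dispatches with ``similarly.''
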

\begin{proof} Set $A=\varinjlim A_\alpha$.
We need to prove that if, for some $\alpha$, two elements of $\mP_P(A_\alpha)$ represented by
$A_\alpha\otimes P\tto \cL_1 $ and $A_\alpha\otimes P\tto \cL_2 $ become identical in $\mP_P(A)$ then they are identical in $\mP_P(A_\beta)$, for some $\beta$. Indeed, we have natural isomorphisms
\begin{eqnarray*}
\Hom_A(A\otimes_{A_\alpha}\cL_i,A\otimes_{A_\alpha}\cL_j)&\simeq&\Hom(\unit,A\otimes_{A_\alpha}\cL_i^\vee\otimes_{A_\alpha}\cL_j)\\
&\simeq &\varinjlim_\beta \Hom(\unit,A_\beta\otimes_{A_\alpha}\cL_i^\vee\otimes_{A_\alpha}\cL_j)\\
&\simeq &\varinjlim_\beta \Hom_{A_\beta}(A_\beta\otimes_{A_\alpha}\cL_i,A_\beta\otimes_{A_\alpha}\cL_j).
\end{eqnarray*}
Hence, an isomorphism between the $A\otimes_{A_\alpha}\cL_i$ yields an isomorphism between the $A_\beta\otimes_{A_\alpha}\cL_i$ for some $\beta$. Similarly, if this isomorphism leads to a commutative triangle with $A\otimes P$, the same is true for $A_\beta\otimes P$ for some $\beta$.
\end{proof}

\subsubsection{}\label{CalcVer4}
For $A\in\Alg\cC$, we observe that $a\in\Omega(A)$ has a left inverse if and only if the subobject~$X$ of $A$ spanned by $\{a,Da\}$ generates $A$ as a left $A$-module. Indeed, one direction of this claim is immediate. On the other hand, if $X$ generates $A$, then there exist $b,c\in \Omega(A)$ for which
$$ba+c(Da)=1.$$
Squaring this equation and using the relations in \ref{RelVer4} implies
$$(bab+(Da)(Db)(Dc))a=1.$$

\begin{lemma}\label{LemLocVer4}
If $A\in\Alg\cC_1$ is local, then so is the (non-commutative) $k$-algebra $\Omega(A)$.
\end{lemma}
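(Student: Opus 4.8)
The plan is to translate locality of $\Omega(A)$ into a statement about the maximal ideal $\fm$ of $A$, using the left-invertibility criterion recorded in~\ref{CalcVer4}. Write $R=\Omega(A)$ and set $\mathfrak{M}:=\Omega(\fm)\subset R$. First I would check that $\mathfrak{M}$ is a two-sided ideal of $R$: applying the monoidal functor $\Omega$ to the module maps $A\otimes\fm\to\fm$ and $\fm\otimes A\to\fm$ yields $R\mathfrak{M}\subseteq\mathfrak{M}\supseteq\mathfrak{M}R$, and $1\notin\mathfrak{M}$ since $1\in\fm$ would force $\fm=A$. Moreover, since $\fm$ is a subobject of $A$ in $\cC_1$, the subspace $\mathfrak{M}$ is stable under $D$.

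The crux is a clean description of the left-invertible elements. By~\ref{CalcVer4}, an element $a\in R$ has a left inverse if and only if the subobject spanned by $\{a,Da\}$ generates $A$ as a left $A$-module. Because $A$ is local, a subobject $X\subset A$ generates $A$ precisely when $X\not\subseteq\fm$: if $X\subseteq\fm$ then the ideal $AX\subseteq\fm\subsetneq A$, whereas if $AX\neq A$ then $AX$ is a proper ideal, hence contained in the unique maximal ideal $\fm$, which forces $X\subseteq\fm$. (This is the one step drawing on the commutative algebra of~\cite{ComAlg}, namely that every proper ideal of a local algebra lies inside its maximal ideal.) Since $\mathfrak{M}$ is $D$-stable, $a\in\mathfrak{M}$ implies $Da\in\mathfrak{M}$, so the span of $\{a,Da\}$ lies in $\fm$ exactly when $a\in\mathfrak{M}$. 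Combining these, $a\in R$ has a left inverse if and only if $a\notin\mathfrak{M}$.

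Finally I would upgrade ``left-invertible'' to ``invertible'' by the standard argument, which is where one must be slightly careful in the noncommutative setting. Given $a\notin\mathfrak{M}$, choose $b$ with $ba=1$; since $\mathfrak{M}$ is a left ideal and $1\notin\mathfrak{M}$, we get $b\notin\mathfrak{M}$, so $b$ in turn has a left inverse $c$ with $cb=1$, and then $a=(cb)a=c(ba)=c$, whence $ab=cb=1$ and $a$ is two-sided invertible. Conversely, every element of $\mathfrak{M}$ fails to be left-invertible and so is a non-unit. Thus the non-units of $R$ are exactly the two-sided ideal $\mathfrak{M}$, so $R=\Omega(A)$ is local. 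The only genuinely category-specific ingredient is~\ref{CalcVer4}; everything else is formal, and the main point to handle with care is this passage from one-sided to two-sided invertibility, which the ideal property of $\mathfrak{M}$ resolves.
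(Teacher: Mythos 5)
Your proof is correct and takes essentially the same route as the paper: both reduce locality of $\Omega(A)$ to the left-invertibility criterion of~\ref{CalcVer4} together with the observation that, by locality of $A$, the subobject spanned by $\{a,Da\}$ generates $A$ precisely when $a\notin\Omega(\fm)$. The only (cosmetic) differences are that the paper obtains a right inverse by running the computation of~\ref{CalcVer4} symmetrically and quotes~\ref{ConsMNGR} to see that $\Omega(\fm)$ is a maximal ideal, whereas you prove $\mathfrak{M}=\Omega(\fm)$ is a two-sided ideal and upgrade left invertibility abstractly --- just note that in that step the deduction $b\notin\mathfrak{M}$ from $ba=1$ uses that $\mathfrak{M}$ is a \emph{right} ideal (which you did establish), not a left ideal as written.
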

\begin{proof}
Let $\fm<A$ be the maximal ideal. Since $A/\fm$ is a field extension of $k$, see \ref{ConsMNGR}, it follows that $\Omega(\fm)<\Omega(A)$ is also a maximal ideal. Consider $a\in \Omega(A)$ not included in $\Omega(\fm)$. Then the subobject of $A$ spanned by $\{a,Da\}$ generates $A$ as an ideal, so by \ref{CalcVer4} $a$ admits a left inverse. The same argument shows that it admits a right inverse. Hence $a$ is a unit in $\Omega(A)$, proving that $\Omega(A)$ is local.
\end{proof}

\begin{proof}[Proof of Proposition~\ref{PropProjAff}]
We have, using the relations in \ref{RelVer4},
$$\Phi_T(A)\;=\; \{a\in \Omega(A)\,|\, Da=a^2\}.$$
In this interpretation, $\Phi_T(A)\to \mP_P(A)$ sends $a\in \Phi_T(A)\subset \Omega(A)$ to 
$$A\otimes P\;\tto\; \cL_u^a(A):=A\otimes_T\cL_u.$$
Concretely, $\cL_u^a(A)$ is the (invertible) $A$-module with $\Omega\cL_u^a(A)\simeq\Omega A$ as an $\Omega A$-module, but with action of $D$ given by $D\underline{x}=\underline{xa+Dx}$. Here we write $\underline{x}$ for $x\in \Omega A$ interpreted in $\cL_u^a(A)$. The morphism from $A\otimes P$ sends $1\otimes v_1$ to $\underline{1}$. From this formulation, it is clear that $\Phi_T\to\mP_P$ is a monomorphism.

By Lemmata~\ref{LemTech} and~\ref{ProjLim}, it suffices to prove that that $\Phi_T\to\mP_P$ is an isomorphism when evaluated on local algebras. Thus we let $A\in\Alg\cC_1$ be a local algebra. 

Let $\cL$ be an invertible $A$-module. By Lemma~\ref{LemLocVer4}, we have $\Omega(\cL)\simeq\Omega(A)$.
Consequently, $\cL\simeq\cL_u^a(A)$ for some $a\in\Omega A$.
Take $A\otimes P\tto\cL=\cL_u^a(A)$ corresponding to an element of $\mP_P(A)$. If we let $\underline{x}$, for some $x\in\Omega( A)$, be the image of $1\otimes v_1$, then
$$1\otimes v_2\;\mapsto \; \underline{Dx+xa}=\underline{Dx+ax+(Da)(Dx)}.$$
Since $A\otimes P\tto\cL$ is an epimorphism, it follows from the computation in~\ref{CalcVer4} that $x$ is a unit. We then see that we can represent the same element of $\mP_P(A)$ by the image of $a+x^{-1}Dx(1+Da)$ under $\Omega(A)\supset\Phi_T(A)\to\mP_P(A)$.
\end{proof}

\subsection{A normal subgroup}

\subsubsection{} We denote by $\mG$ the functor 
$$\Alg\cC_1\to\Grp,\quad A\mapsto (\Omega A)^\times.$$
This is an affine group scheme in $\cC_1$ with
$$\Omega(\cO(\mG))\;=\; k[x,x^{-1},y]$$
and $Dx=y$. In particular, $D(x^{-1})=x^{-2}y$. We can also observe that
$$\Delta(x)=x\otimes x\quad\mbox{and}\quad \Delta(y)=y\otimes x+ x\otimes y.$$
We also point out that $\mG$ is {\bf not} abelian.

\subsubsection{} We define the subfunctor $N<\mG$
$$\Alg\cC_1\to\Grp,\quad A\mapsto ( A^0)^\times.$$
This is a normal subgroup, corresponding to 
$$\cO(\mG)\tto \cO(N)=k[t,t^{-1}],\quad x\mapsto t.$$
Then the quotient group $Q:=G/N$, see \cite[7.2.3]{ComAlg},
satisfies
\begin{equation}\label{ks2}
k[s]/s^2\simeq\cO(Q)=\cO(\mG)^N \;\hookrightarrow \cO(\mG),\quad s\mapsto x^{-1}y.
\end{equation}

\begin{corollary}\label{Corses}
\begin{enumerate}
\item We have short exact sequence, see \cite[7.2.5]{ComAlg}, in the category of affine group schemes in $\cC_1$:
$$1\to N\to\mG\to \alpha_2\to 1$$
with $\alpha_2$ the Frobenius kernel of ordinary multiplicative group $\mG_m$.
\item The sheafification of 
$$\Alg\cC_1\to\Set,\quad A\mapsto (\Omega A)^\times/ (A^0)^\times$$
in $\Fais\cC_1$ is given by the affine ($k$-)scheme corresponding to $k[s]/s^2$. Moreover, the natural map $(\Omega A)^\times/ (A^0)^\times\to\Phi_{k[s]/s^2}(A) $ sends $a (A^0)^\times $ to the algebra morphism $s\mapsto a^{-1}Da$.
\end{enumerate}
\end{corollary}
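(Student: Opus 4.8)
The plan is to read both statements off the quotient construction of \cite[7.2.3 and 7.2.5]{ComAlg} together with the presentation~\eqref{ks2}; the only real computations are the coproduct of $s=x^{-1}y$ and the formula for the quotient map on points.

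For part (1), normality of $N$ and the identification $\cO(Q)=\cO(\mG)^N\cong k[s]/s^2$, $s\mapsto x^{-1}y$, are already in place, so \cite[7.2.5]{ComAlg} directly yields the sequence $1\to N\to\mG\to Q\to 1$ of affine group schemes in $\cC_1$. To identify $Q$ with $\alpha_2$ I would compute the coproduct of $s$ induced from $\cO(\mG)$. Using $\Delta(x)=x\otimes x$, hence $\Delta(x^{-1})=x^{-1}\otimes x^{-1}$, and $\Delta(y)=y\otimes x+x\otimes y$, and evaluating the product $\Delta(x^{-1})\Delta(y)$ with the braided multiplication of $\cO(\mG)\otimes\cO(\mG)$, one gets $\Delta(s)=s\otimes 1+1\otimes s$. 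The point to watch is that the two correction terms produced by the braiding vanish: one carries the factor $Dy=D^2x=0$ and the other the factor $y^2=0$ (see~\ref{RelVer4}). Thus $s$ is primitive and $Q\cong\alpha_2$.

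For part (2), by the construction of the quotient in \cite[7.2.3]{ComAlg} the group scheme $Q=\mG/N$ is exactly the $\Fais\cC_1$-sheafification of $A\mapsto\mG(A)/N(A)=(\Omega A)^\times/(A^0)^\times$, and since $Q$ is affine with $\cO(Q)\cong k[s]/s^2$ its functor of points is $\Phi_{k[s]/s^2}$; this is the first assertion. For the explicit description of the quotient map I would trace it through functors of points. Under $\mG(A)=\Alg\cC_1(\cO(\mG),A)\cong(\Omega A)^\times$ an element $a$ corresponds to the morphism $\phi_a$ with $\phi_a(x)=a$, so $\phi_a(y)=\phi_a(Dx)=Da$ and $\phi_a(x^{-1})=a^{-1}$; precomposing with the inclusion $k[s]/s^2\hookrightarrow\cO(\mG)$, $s\mapsto x^{-1}y$, sends $a$ to the morphism $s\mapsto a^{-1}Da$, as claimed.

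It remains to check, using the relations of~\ref{RelVer4}, that $a\mapsto(s\mapsto a^{-1}Da)$ is well defined and factors through $N$-cosets. Since $Dy=D^2x=0$, the relation $bc+cb=(Db)(Dc)$ forces $Da$ to commute with $a^{-1}$, whence $(a^{-1}Da)^2=a^{-2}(Da)^2=0$; moreover a short computation gives $D(a^{-1}Da)=(a^{-1}Da)^2=0$, so $a^{-1}Da\in A^0$ and $s\mapsto a^{-1}Da$ is a genuine morphism out of $k[s]/s^2$. For $N$-invariance, for $n\in(A^0)^\times$ one has $(an)^{-1}D(an)=n^{-1}(a^{-1}Da)n$, and since $a^{-1}Da$ and $n$ both lie in $A^0$, which is commutative (again by $bc+cb=(Db)(Dc)$ with $Db=Dc=0$), this equals $a^{-1}Da$. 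I expect the only step requiring genuine care to be the coproduct computation in part (1), precisely because of the braided multiplication on $\cO(\mG)\otimes\cO(\mG)$; the rest is direct manipulation with the relations of $\cC_1$.
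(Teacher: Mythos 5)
Your route is the same as the paper's: part (1) hinges on the observation that $s=x^{-1}y$ is primitive in $\cO(\mG)$, and part (2) is the general quotient theory of \cite[\S 7.2]{ComAlg} (the paper cites \cite[Proposition~7.2.4(1)]{ComAlg} for the fact that the affine quotient $Q=\mG/N$, with $\cO(Q)=\cO(\mG)^N\simeq k[s]/s^2$, is the $\Fais\cC_1$-sheafification of $A\mapsto \mG(A)/N(A)$) combined with reading the map off on points. Your part (2), including the formula $\phi_a(s)=a^{-1}Da$ and the checks that $a^{-1}Da\in A^0$, squares to zero, and is insensitive to multiplication by $(A^0)^\times$, is correct.

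There is, however, a false step in your primitivity computation, and it sits exactly at the place you flag as ``the point to watch''. In the braided tensor product $\cO(\mG)\otimes\cO(\mG)$ one does \emph{not} have $\Delta(x^{-1})=x^{-1}\otimes x^{-1}$: with the multiplication rule $(a\otimes b)(a'\otimes b')=aa'\otimes bb'+a(Da')\otimes (Db)b'$ one computes
$$(x\otimes x)(x^{-1}\otimes x^{-1})\;=\;1\otimes 1+x^{-1}y\otimes x^{-1}y\;\neq\;1\otimes 1,$$
so that in fact $\Delta(x^{-1})=(x\otimes x)^{-1}=x^{-1}\otimes x^{-1}+x^{-2}y\otimes x^{-2}y$. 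Your conclusion survives this correction: the extra summand $x^{-2}y\otimes x^{-2}y$ annihilates $\Delta(y)=y\otimes x+x\otimes y$, because every term of that product carries a factor $y^2=0$ or $Dy=0$, so $\Delta(s)=s\otimes 1+1\otimes s$ holds as claimed. Still, as written your argument asserts an identity that fails, for precisely the braiding reason you set out to control. A clean repair avoids inverting in the braided tensor product altogether: using $Ds=0$ and $xs=y$, check directly that
$$(x\otimes x)(s\otimes 1+1\otimes s)\;=\;xs\otimes x+x\otimes xs\;=\;y\otimes x+x\otimes y\;=\;\Delta(y),$$
whence $\Delta(s)=\Delta(x)^{-1}\Delta(y)=s\otimes 1+1\otimes s$, giving $Q\simeq\alpha_2$ and the paper's part (1).
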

\begin{proof}
Part (1) follows from the above combined with the observation that $x^{-1}y$ is primitive in $\cO(\mG)$.

For part (2) is an application of part (1), by \cite[Proposition~7.2.4(1)]{ComAlg} and \eqref{ks2}.
\end{proof}
\subsection{A (non-normal) strongly observable subgroup}

\subsubsection{} We consider the general linear group $GL_P$ of $P\in\cC_1$, see \cite[Example~7.1.5(1)]{ComAlg}. We denote by $H<GL_P$ the transporter subgroup of $\unit\simeq\soc P\subset P$, see \cite[7.4.4]{ComAlg}.

\begin{remark}
That $H<GL_P$ is not normal can be easily observed. For instance, consider $A\in\Alg\cC_1$ with some $a\in (\Omega A)^\times $ which is not in $A^0$, for example $x\in \Omega \Sym P$ in Example~\ref{SymP}. Consider $g\in GL_P(A)=\Aut_A(A\otimes P)$ corresponding to $1\otimes v_1\mapsto a\otimes v_1$ and $h\in H(A)$ corresponding to $1\otimes v_1\mapsto 1\otimes v_1+x\otimes v_2$, for some $x\in \Omega(A)$ with $(Dx)(Da)\not=0$, yields $ghg^{-1}$ not in $H(A)$.
\end{remark}

\begin{theorem}
\begin{enumerate}
\item The quotient scheme $GL_P/H$ exists as in Hypothesis~\ref{hypoQuo}. Moreover it is the affine $k$-scheme corresponding to $k[s]/s^2$.
\item  The immersion in $\Sch\cC_1$
$$GL_P/H\;\to\; \mP_P$$ from Lemma~\ref{LemImm} and Example~\ref{ExStab}(2), is given by $\bSpec$ of 
$$T\tto k[s]/s^2,\quad t\mapsto s.$$
\end{enumerate}
\end{theorem}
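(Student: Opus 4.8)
The plan is to make the orbit map $a_U\colon GL_P\to\mP_P\simeq\bSpec T$ completely explicit and to recognise its values as the coordinate of Corollary~\ref{Corses}, thereby reducing the quotient $GL_P/H$ to the already-understood quotient $\mG/N\simeq\bSpec(k[s]/s^2)$. Throughout I use the identification $\mP_P\simeq\bSpec T$ of Proposition~\ref{PropProjAff}, so that $\mP_P(A)=\Phi_T(A)=\{a'\in\Omega A\mid Da'=a'^2\}$.

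First I would describe $GL_P$ concretely. Writing an element of $GL_P(A)=\Aut_A(A\otimes P)$ on the free $\Omega A$-module $\Omega(A\otimes P)=\Omega A\,v_1\oplus\Omega A\,v_2$ and imposing compatibility with $D$, the relations of \ref{RelVer4} force
$$f(v_1)=a v_1+b v_2,\qquad f(v_2)=(Da)v_1+(a+Db)v_2,\qquad a,b\in\Omega A.$$
Since $H$ is the transporter of $\soc P=\langle v_2\rangle$, one sees that $f\in H(A)$ precisely when $f$ preserves $A\cdot v_2$, i.e.\ when $Da=0$. The elements with $b=0$ form the copy of $\mG$ from the previous subsection ($a\in(\Omega A)^\times$, $f(v_1)=a v_1$), and a direct check gives $\mG\cap H=N$ with $N(A)=(A^0)^\times$.

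Next I would compute $a_U$. Over a local algebra $A$, Lemma~\ref{LemLocVer4} makes $\Omega A$ local, and reduction modulo the maximal ideal (where $D=0$ by \ref{ConsMNGR}) shows that invertibility of $f$ forces $a$, and hence $e:=a+Db$, to be invertible. The point $f\cdot U$ has kernel $A\cdot f(v_2)$; since $De=Da$, left multiplication by the unit $e^{-1}$ gives $e^{-1}f(v_2)=(e^{-1}De)v_1+v_2$, which by the computation in the proof of Proposition~\ref{PropProjAff} identifies $f\cdot U$ with the coordinate $a'=e^{-1}De$. A brief manipulation using $xy+yx=(Dx)(Dy)$ yields $a'^2=0$ and therefore $Da'=0$, so $a_U$ factors through the closed immersion $\Phi_{k[s]/s^2}\hookrightarrow\Phi_T$ induced by $T\twoheadrightarrow k[s]/s^2$, $t\mapsto s$. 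Crucially, $a_U(f)=e^{-1}De$ has the exact form of the restriction of $a_U$ to $\mG$ (with parameter $e$), so over local algebras $f\cdot U=f_e\cdot U$; this gives $f=f_e\,(f_e^{-1}f)$ with $f_e\in\mG$ and $f_e^{-1}f\in H$, i.e.\ $GL_P=\mG\cdot H$ over local algebras.

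Finally I would assemble the quotient and isolate the genuine difficulty. The orbit map realises $GL_P/_0H$ as a subfunctor of $\Phi_{k[s]/s^2}$, its fibres over $f\cdot U$ being exactly the left cosets $fH$ because $H$ is the stabiliser of $U$. By Corollary~\ref{Corses}(2) the map $\mG\to\Phi_{k[s]/s^2}$ is the quotient $\mG\to\mG/N$, which is faithfully flat by Proposition~\ref{QuoFP} and hence an epimorphism in $\Fais\cC_1$; as this already surjects onto $\Phi_{k[s]/s^2}$, the sheafification of $GL_P/_0H$ is all of $\Phi_{k[s]/s^2}=\bSpec(k[s]/s^2)$, proving existence and the identification in part (1). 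Lemma~\ref{LemImm} (now applicable, since Hypothesis~\ref{hypoQuo} holds for this pair) then yields the immersion $GL_P/H\to\mP_P$, and by construction it is the inclusion $\Phi_{k[s]/s^2}\hookrightarrow\Phi_T$, i.e.\ $\bSpec$ of $T\twoheadrightarrow k[s]/s^2$, $t\mapsto s$, giving part (2). I expect the main obstacle to be the orbit computation over the non-commutative algebra $\Omega A$: justifying invertibility of $e=a+Db$, deriving the closed form $a'=e^{-1}De$ for the kernel $A\cdot f(v_2)$, and verifying $a'^2=0$ from the relations \ref{RelVer4}; the passage from the local statement $GL_P=\mG\cdot H$ to the sheaf-level identification of $GL_P/H$ must be routed through the faithful flatness of $\mG\to\mG/N$ rather than attempted directly.
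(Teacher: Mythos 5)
Your proposal is correct, and it is organised genuinely differently from the paper's proof. For part (1) the paper never passes through the orbit map into $\mP_P$: it directly asserts (as a ``straightforward exercise'') that $[f]\mapsto a(A^0)^\times$, for $f(1\otimes v_1)=a\otimes v_1+b\otimes v_2$, is a presheaf-level bijection $H(A)\backslash GL_P(A)\simeq (\Omega A)^\times/(A^0)^\times$ (surjectivity coming from the diagonal copy of $\mG$, which is exactly your local decomposition $GL_P=\mG\cdot H$), composes with $H(A)\backslash GL_P(A)\simeq GL_P(A)/H(A)$, and quotes Corollary~\ref{Corses}(2); part (2) is then handled by the rigidity observation that the only non-zero algebra morphisms $T\to k[s]/s^2$ send $t$ to a scalar multiple of $s$. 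Your route instead computes the orbit map $f\mapsto e^{-1}De$ (with $e=a+Db$) into $\mP_P\simeq\Phi_T$, identifies the image with $\Phi_{k[s]/s^2}\subset\Phi_T$, and closes with the sheaf-level epimorphism argument via $\mG\to\mG/N$. What your organisation buys is that part (2) comes for free: the immersion is by construction $\bSpec$ of $t\mapsto s$. What the paper's organisation buys is that everything stays at the presheaf level in the coordinate $a$ rather than $e$, so invertibility issues never arise; in your argument the formula $a'=e^{-1}De$ and hence the containment of $GL_P/_0H$ in $\Phi_{k[s]/s^2}$ are only established over local algebras, and your final claim that the orbit map realises $GL_P/_0H$ as a subfunctor of $\Phi_{k[s]/s^2}$ for \emph{arbitrary} $A$ needs one additional (routine) step: either that an element of $\Omega A$ vanishes once it vanishes in all localisations $A_{\fm}$ (valid here since $\cC_1$ is {\bf MN} and {\bf GR}), or a sandwich argument inside $\Fais_{}\cC_1$ using that membership in a subsheaf can be checked on a cover. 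Since you explicitly flag this local-to-global passage and the surrounding computations ($e$ invertible over local $A$, $(De)^2=0$, $H$ equal to the full stabiliser because the transporter is inversion-stable) all check out, this is a presentational gap rather than a mathematical one.
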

\begin{proof}
It is a straightforward exercise to verify that, for every $A\in\Alg\cC_1,$ the map
$$H(A)\backslash GL_P(A)\to (\Omega A)^\times/(A^0)^\times$$
which sends the equivalence class of an isomorphism of $A\otimes P$ given by $1\otimes v_1\mapsto a\otimes v_1+b\otimes v_2$ to $a (A^0)^\times $, is well-defined and a bijection. Part (1) thus follows from Corollary~\ref{Corses}(2) and $H(A)\backslash GL_P(A)\simeq GL_P(A)/H(A)$.


Part (2) follows from tedious verification, but also because the only non-zero morphisms $T\to k[s]/s^2$ send $t$ to a scalar multiple of $s$.
\end{proof}

\begin{remark}
One can verify easily that
$$H_{\red}\,=\, (GL_P)_{\red}\;\simeq\; \mG_m\times \mG_a.$$
\end{remark}

\subsection*{Acknowledgement}
The research was partially supported by ARC grant FT220100125. The author thanks Pavel Etingof, Victor Ostrik and Geordie Williamson for interesting discussions.

\end{document}